\documentclass[11pt]{article}

\usepackage{url}
\usepackage{amssymb,enumerate}
\usepackage{amsmath,amsfonts}
\usepackage{amsthm}
\usepackage{latexsym}
\usepackage{mathrsfs}
\usepackage{color}
\usepackage{microtype}
\usepackage{geometry}

\usepackage{algorithm}
\usepackage{algorithmicx}
\usepackage{algpseudocode}

\usepackage{enumitem}

\usepackage{multirow}
\allowdisplaybreaks[4]

\usepackage{amsmath,amsfonts,bm}

\def\1{\bm{1}}

\def\rmX{{\mathbf{X}}}

\DeclareMathAlphabet{\mathsfit}{\encodingdefault}{\sfdefault}{m}{sl}
\SetMathAlphabet{\mathsfit}{bold}{\encodingdefault}{\sfdefault}{bx}{n}

\newcommand{\E}{\mathbb{E}}

\newcommand{\R}{\mathbb{R}}

\DeclareMathOperator*{\argmin}{arg\,min}

\newcommand{\MBspace}{\ensuremath{\R^{{\bf m}\odot{\bf n}}}}

\theoremstyle{plain}
\newtheorem{theorem}{Theorem}

\newtheorem{lemma}{Lemma}

\newtheorem{assumption}{Assumption}

\newtheorem{remark}{Remark}

\usepackage{graphicx, color}
\usepackage{algorithm, algpseudocode} 
\usepackage{mathrsfs} 

\usepackage{lipsum}

\usepackage[colorlinks=true,
            linkcolor=blue,
            citecolor=blue,
            urlcolor=blue]{hyperref}

\title{Schattor: Schatten-family methods for deep learning optimization}
\author{Bohao Ma\thanks{School of Data Science, The Chinese University of Hong Kong (Shenzhen), Shenzhen, Guangdong, China (email: {\tt bohaoma@link.cuhk.edu.cn}).}
\and
Junyu Zhang\thanks{Department of Industrial Systems Engineering and Management, National University of Singapore, Singapore (email: {\tt junyuz@nus.edu.sg}). The work of Junyu Zhang was partially supported by the Singapore Ministry of Education Academic Research Fund Tier 2 (MOE-T2EP20125-0007).}
\and
Chuan He\thanks{Department of Mathematics, Link\"oping University, Sweden (email: {\tt chuan.he@liu.se}). The work of Chuan He was partially supported by the Wallenberg AI, Autonomous Systems and Software Program (WASP) funded by the Knut and Alice Wallenberg Foundation. Corresponding author.}
}

\date{
}

\begin{document}
\maketitle

    
\begin{abstract}

Modern deep learning optimization features heterogeneous parameter structures, noisy gradients, and highly nonconvex landscapes, posing significant challenges for both algorithm design and theoretical analysis. Motivated by the limitations of SGD and the success of adaptive optimizers, we propose {\it Schattor}, a family of adaptive first-order methods based on Schatten norms. Schattor unifies SGD and the recently proposed matrix-variate adaptive optimizer Muon within a single Schatten-norm-based framework. We establish dimension-free stationarity guarantees for methods in the Schattor family for stochastic matrix optimization problems via a novel matrix martingale moment bound. We also develop multi-block extensions that adaptively balance block-wise optimization progress and prove dimension-free stationarity guarantees in this more general setting.

\end{abstract}

\noindent{\small \textbf{Keywords:} Deep learning optimization, stochastic first-order methods, Schatten norms, dimension-free stationarity guarantees.}

\smallskip

\noindent{\small \textbf{Mathematics Subject Classification:} 49M37, 90C30, 90C90.}

\section{Introduction}

Training neural networks is one of the most important optimization problems in modern machine learning practice \cite{brown2020language,lecun2015deep,vaswani2017attention}, yet it remains a highly challenging task due to the large scale and complex structure of the  underlying problems. The most basic method is SGD, which enables scalable training by replacing full gradients with mini-batch stochastic estimates \cite{bottou2010large}. Despite its relatively mature theoretical understanding and extensive study, SGD-based methods are often insufficient for training modern neural networks effectively, including widely used architectures such as transformers; see, e.g., \cite{zhang2020adaptive,zhang2024transformers}. 

This limitation has inspired interest in adaptive optimization methods; see, e.g., \cite{bernstein2018signsgd,gupta2018shampoo,jordan2024muon,kingma2014adam,mishchenko2024prodigy}, which rely on more balanced search directions that better account for the heterogeneity across parameters, layers, and gradient scales. In particular, Adam \cite{kingma2014adam}, one of the most widely used adaptive optimizers for neural network training, performs coordinate-wise adaptive updates by rescaling first-order moment estimates of stochastic gradients using estimates of second moments. Notably, despite its strong empirical success, Adam does not explicitly exploit any particular neural network architecture or structural property. Beyond Adam, there has recently been growing interest in alternative principles for adaptive deep learning optimization. For example, signSGD \cite{bernstein2018signsgd} adopts a strategy that normalizes coordinate-wise magnitudes by retaining only the signs of the gradient entries. More recently, Muon \cite{jordan2024muon} can be interpreted as a matrix-variate analogue of this idea, extending it from coordinate-wise normalization to spectral normalization. Instead of enforcing identical magnitudes in the original coordinate space, Muon operates on matrix-valued gradients and equalizes the singular values of the update direction, thereby mitigating scale heterogeneity in matrix-valued gradients. A more complete review of the relevant developments is deferred to Appendix \ref{apx:related-work}.

Moving from high-level algorithmic intuition to rigorous analysis, norm-based perspectives provide a natural framework for understanding adaptive optimization methods. In particular, the ideas behind signSGD and Muon can be interpreted as constructing updates using the $\ell_\infty$-norm and the spectral norm (two extreme choices from the $\ell_p$-norm and Schatten-$p$ norm families). In contrast, traditional SGD can be interpreted as relying on the standard Euclidean norm. Building on this norm-based perspective, we aim to unify first-order methods induced by the Schatten family of norms within a single framework, which we call {\it Schattor}. This framework enables the exploration of less extreme norm choices than the Schatten-$\infty$ geometry underlying Muon, while retaining stronger mechanisms for mitigating gradient heterogeneity than SGD-type methods, which correspond to the Schatten-$2$, or Frobenius, geometry in the matrix setting.

To introduce \emph{Schattor}, we begin by considering the single-block matrix optimization problem:
\begin{align*}
    \min_{X\in\R^{m\times n}} f(X),
\end{align*}
where $f:\R^{m\times n}\to\R$ is a continuously differentiable objective function. 
We next highlight the distinctive features of Schattor:


\begin{enumerate}[leftmargin = .6cm]
    \item[{(i)}] \textbf{Two update rules.} Given a gradient estimate $M_k$, the $S_p$-norm-based ball oracle computes the update direction by minimizing the ball-constrained linear approximation:
    \begin{align*}
        Z_k\in\argmin_{\|Z\|_{S_p}\le 1}\ \langle M_k,Z\rangle,
        \qquad
        X_{k+1}=X_k+\eta Z_k .
    \end{align*}
    The $S_p$-norm-based regularization oracle instead minimizes the quadratically regularized linear approximation
    \begin{align*}
    Z_k\in\argmin_{Z\in\R^{m\times n}}
        \left\{\langle M_k,Z\rangle+\frac{1}{2}\|Z\|_{S_p}^2\right\},
        \qquad
        X_{k+1}=X_k+\eta Z_k.
    \end{align*}
    These two primitives define the local update rules studied throughout the paper. The $S_p$-ball oracle is closely related to linear-minimization-oracle training \cite{pethick2025training}; it also captures the original spectral-norm view of Muon-type orthogonalized updates
    \cite{jordan2024muon,shulgin2025beyond}. The regularization oracle can be interpreted as an update rule derived from the majorization-minimization principle. Compared with the $S_p$-ball oracle, it yields the same update direction but introduces an additional rescaling based on the size of $M_k$; see Lemma~\ref{lem:single-block-oracle} for the derivation and, e.g., \cite{lau2025polargrad} for a regularization-oracle-based variant of Muon. 

    \item[{(ii)}] \textbf{Continuum of Schatten geometries.} Varying $p$ controls the geometry induced by the oracle, thereby shaping the spectral distribution of the resulting update for a given gradient estimate $M_k$. When $p=2$, the update aligns with the estimate of the negative gradient $-M_k$. When $p=\infty$, the update has all singular values equal to one, corresponding to an orthogonalized direction. For intermediate values $p\in(2,\infty)$, the singular-value distribution of the update interpolates between the original singular values of $M_k$ and all singular values equal to one.

    For optimization problems over $\R^n$, oracle-based methods using $\ell_p$-norm geometries are naturally included in our framework. Indeed, by restricting the square matrix variable to the space of diagonal matrices, the Schatten-$p$ norm reduces to the $\ell_p$-norm of the diagonal entries. Consequently, our framework naturally recovers SGD $(p=2)$ and signSGD $(p=\infty)$, while also encompassing a continuum of methods that interpolate between them.


    \item[{(iii)}] \textbf{Momentum and mini-batch gradient estimators.} The algorithms studied in this paper construct stochastic gradient estimates using momentum and mini-batching. At iteration $k$, a mini-batch gradient estimate $\bar G_k$
    is combined with the previous estimate through a momentum mechanism:
    \begin{align*}
    \bar G_k =  \frac{1}{B}\sum_{i=1}^B G(X_k;\xi_{k,i}),\quad M_k=(1-\theta)M_{k-1}+\theta \bar G_k\qquad\forall k\ge 1,
    \end{align*}
    where $G(\cdot;\cdot)$ is the stochastic gradient, $\theta\in(0,1]$, and $M_0$ is an initial estimate. Momentum and mini-batching are two standard tools for stabilizing stochastic gradient estimates in large-scale optimization. 
    

    \item[{(iv)}] \textbf{Multi-block extension.} The single‑block formulation isolates the role of matrix geometry, but a neural network typically consists of many parameter blocks with different dimensions, Lipschitz constants, and noise levels. We model this multi‑block setting as
    \begin{align*}
        \min_{\mathbf X\in
        \R^{m_1\times n_1}\times\cdots\times \R^{m_J\times n_J}}
        f(\mathbf X),
        \qquad
        \mathbf X=([X]_1,\ldots,[X]_J).
    \end{align*}
    To couple the blocks, we introduce weighted mixed norms that combine block-wise Schatten norms through a vector norm. Writing
    $L_j:=L_{p_j,q_j}$ for the block-wise Lipschitz constants, a primal-dual norm pair takes the form
    \begin{align*}
        \|\mathbf X\|_{{\bf w},{\bf p}}
        :=
        \left[
        \sum_{j=1}^J
        \left(L_j^{1/2}\|[X]_j\|_{S_{p_j}}\right)^\alpha
        \right]^{1/\alpha},\quad
        \|\mathbf G\|_{{\bf v},{\bf q}}
        :=
        \left[
        \sum_{j=1}^J
        \left(L_j^{-1/2}\|[G]_j\|_{S_{q_j}}\right)^\beta
        \right]^{1/\beta},
    \end{align*}
    where $\alpha$ and $\beta$, as well as $p_j$ and $q_j$, $j\in[J]$, are conjugate exponents. This construction is related in spirit to the modular-norm viewpoint of \cite{large2024scalable}, which argues that neural-network architecture should inform the scaling and normalization of updates. Here the weights encode heterogeneous block-wise smoothness and the outer norm specifies how the block-wise update magnitudes are coupled.
\end{enumerate}

The above ingredients reveal a gap in the existing literature. Prior work has clarified the role of norm geometry in stochastic optimization algorithms, motivated matrix-aware algorithms such as Muon, developed architecture-aware norms, and studied norm-constrained linear-minimization-oracle training \cite{bernstein2024old,jordan2024muon,large2024scalable,pethick2025training}. These works cover important aspects of the picture, but they do not provide a unified framework that combines Schatten-family oracle characterizations, momentum and mini-batching, matrix martingale control, and multi-block mixed-norm coupling.
\vspace{0.2cm}

\noindent \textbf{Contributions.} Our contributions mainly lie in the following aspects.
\begin{itemize}[leftmargin = .6cm]
    \item We derive closed‑form selections for two basic oracle primitives: the Schatten‑norm ball oracle and the Schatten‑norm regularization oracle (see Lemma \ref{lem:single-block-oracle}). We then extend these characterizations to the mixed-norm setting, obtaining closed‑form selections for the corresponding ball and regularization oracles for the multi‑block optimization problem (see Lemma \ref{lem:mult-block-subproblem}).
    \item We prove dimension-free moment bounds for matrix martingales under Schatten-$q$ norms, $q\in[1,2]$; see Lemma \ref{lem:concentration-Sp}. We also establish their mixed-norm counterparts; see Lemma~\ref{lem:multi-block-martingale}. These martingale moment bounds allow us to control the accumulation of stochastic matrix noise under Schatten and mixed-norm geometries, thereby enabling dimension-free convergence guarantees. The key ingredient is the technical inequality in Lemma~\ref{lem:nuc-ppt}, which serves as a substitute for the convenient square-expansion property available in the Euclidean/Frobenius norm setting.
    \item For single-block problems, we use these martingale bounds to derive dimension-free stationarity guarantees for methods based on the ball and regularization oracles; see Theorems \ref{thm:F-con} and \ref{thm:spec-con}. For the $S_p$-ball-oracle method, we prove
    \[
    \frac1K\sum_{k=0}^{K-1}
    \E[\|\nabla f(X_k)\|_{S_q}]
    \le
    \mathcal O\!\left(\sqrt{\sigma_{q,B}}\bigg(\frac{\Delta_f L_{p,q}}{K}\bigg)^{1/4}
    +\sqrt{\frac{\Delta_f L_{p,q}}{K}}
    +\frac{\sigma_{q,B}^2}{\sqrt{\Delta_f L_{p,q}K}}
    \right).
    \]
    For the $S_p$-regularization-oracle method, we prove
    \[
    \frac1K\sum_{k=0}^{K-1}
    \E[\|\nabla f(X_k)\|_{S_q}^2]
    \le
    \mathcal O\!\left(
    \sqrt{\frac{\sigma_{q,B}^2(L_{p,q}\Delta_f+\sigma_{q,B}^2)}{K}}
    +\frac{L_{p,q}\Delta_f+\sigma_{q,B}^2}{K}
    \right).
    \]
    Here, $\Delta_f := f(X_0) - f_{\mathrm{low}}$ denotes the initial function-value gap, while $\sigma_{q,B}^2$ and $L_{p,q}$ denote the variance proxy and the Lipschitz constant under the Schatten geometry, respectively. These bounds recover the classical dimension-free optimal $\mathcal{O}(K^{-1/4})$ convergence rate for the stationarity measure of stochastic first-order methods.
    \item We extend the oracle characterizations and martingale bounds to the weighted mixed-norm multi-block setting, and establish the corresponding stationarity guarantees; see Theorems \ref{thm:multi-F-con} and \ref{thm:multi-spec-con}. With
    $\sigma_{{\bf v},{\bf q},B}^2$ denoting the mixed-norm variance proxy, the ball-oracle-based method has the bound
    \begin{align*}
    \frac{1}{K}\sum_{k=0}^{K-1}
    \E[\|\nabla f({\rmX}_k)\|_{{\bf v},{\bf q}}]\le\mathcal O\!\left(\sqrt{\sigma_{{\bf v},{\bf q},B}}\bigg(\frac{\Delta_f}{K}\bigg)^{1/4}
    +\sqrt{\frac{\Delta_f}{K}}
    + \frac{\sigma_{{\bf v},{\bf q},B}^2}{\sqrt{\Delta_f K}}\right), 
    \end{align*}
    and the regularization-oracle-based method has the bound
    \begin{align*}
    \frac{1}{K}\sum_{k=0}^{K-1}
    \E[\|\nabla f({\rmX}_k)\|_{{\bf v},{\bf q}}^2] \le \mathcal O\!\left(
    \sqrt{\frac{\sigma_{{\bf v},{\bf q},B}^2(\Delta_f+\sigma_{{\bf v},{\bf q},B}^2)}{K}}
    + \frac{\Delta_f+\sigma_{{\bf v},{\bf q},B}^2}{K}\right).    
    \end{align*}
    These bounds recover the classical dimension-free optimal $\mathcal{O}(K^{-1/4})$ convergence rate for the stationarity measure of stochastic first-order methods.
\end{itemize}
\vspace{0.2cm}

\noindent \textbf{Organization.} The rest of this paper is organized as follows. Section \ref{sec:not-pre}
introduces notation and the normed-space preliminaries used throughout the
paper. In Section \ref{sec:oracle}, we derive the explicit expressions for the single-block and multi-block oracles. In Section \ref{sec:single}, we develop and analyze the Schatten-family $S_p$-$S_q$
methods with ball and regularization oracles for single-block matrix optimization problems. Section \ref{sec:multi} extends the methods and
analysis to solve multi-block matrix optimization problems.

\section{Notation and preliminaries}\label{sec:not-pre}

Throughout this paper, we use $\R^{m\times n}$ to represent the Euclidean space of $m\times n$ matrices. We use $\mathrm{Tr}(\cdot)$ to denote the trace of a square matrix, and $\langle\cdot,\cdot\rangle$ to denote the trace inner product for matrices. For any matrix $Z\in\mathbb{R}^{m\times n}$, we let $\sigma(Z)$ denote the vector of its singular values and we use $\|Z\|_{S_r} := \|\sigma(Z)\|_r$, where $\|\cdot\|_{r}$ is the vector $\ell_r$ norm, to denote its Schatten-$r$ norm (also written as the $S_r$-norm), for any $r\in[1,\infty]$.
The pair $p,q\in[1,\infty]$ is called conjugate if $1/p+1/q=1$, where $1/\infty:=0$. When $(p,q)$ is a conjugate pair, $S_p$- and $S_q$-norms form a dual pair in the sense
\begin{align}\label{ineq:dual-Sp-norm}
    \|G\|_{S_q} = \max_{\|X\|_{S_p}\le 1} \langle G,X\rangle\qquad\forall G\in\R^{m\times n}.
\end{align}
For any symmetric matrices $A_1,A_2\in\R^{m\times m}$, $A_1\preceq A_2$ means $A_2 - A_1$ is positive semidefinite. For $\alpha\ge0$ and any positive semidefinite (p.s.d.) matrix $A\in\R^{m\times m}$ with singular value decomposition $A=U\mathrm{diag}(\sigma_1,\ldots,\sigma_m)U^T$, we define the $\alpha$th power of $A$ by $A^\alpha = U\mathrm{diag}(\sigma_1^\alpha,\ldots,\sigma_m^\alpha)U^T$. 
For $\alpha<0$, we define $A^\alpha=(A^+)^{-\alpha}$, where $A^+$ denotes the Moore-Penrose pseudoinverse of $A$. 

We next introduce the norm structure for the Cartesian product space, denoted by
\begin{align*}
\MBspace := \R^{m_1\times n_1} \times \cdots \times \R^{m_J\times n_J}\quad \text{with}\quad {\bf m}:=(m_1,\ldots,m_J),\quad {\bf n}:=(n_1,\ldots,n_J),
\end{align*}
where $\{m_j\}_{j=1}^J$ and $\{n_j\}_{j=1}^J$ are sets of positive integers. For any ${\bf X}\in\MBspace$ and $j\in[J]$, we let $[X]_j\in\R^{m_j\times n_j}$ be its $j$th block. We define mixed norms on $\MBspace$ as follows
\begin{align*}
\|\mathbf{X}\|_{{\bf u}, {\bf r}} := \|(L_1^{1/2}\|[X]_1\|_{S_{r_1}},\ldots,L_J^{1/2}\|[X]_J\|_{S_{r_J}})\|_\gamma\qquad \forall\mathbf{X}\in\MBspace&\\
\text{with}\quad {\bf u}:=(\gamma,L_1,\ldots,L_J),\quad {\bf r}:=(r_1,\ldots,r_J),&
\end{align*}
where $\gamma\in[1,\infty]$, $\|\cdot\|_\gamma$ denotes the $\ell_\gamma$-norm on $\R^J$, $\{L_j\}_{j=1}^J$ is a set of positive numbers, $\{r_j\}_{j=1}^J$ is a set of extended real numbers in $[1,\infty]$. This mixed norm is formed by the composition of a weighted $\ell_\gamma$-norm and $S_{r_j}$-norms, $j\in[J]$, where the $S_{r_j}$-norm measures each $\R^{m_j\times n_j}$, and the weighted $\ell_\gamma$-norm controls the interaction across the $J$ block-wise subspaces. The inner product on $\MBspace$ is given as follows
\begin{align*}
\langle{\bf X}, {\bf Y}\rangle = \sum_{j=1}^J\langle [X]_j, [Y]_j\rangle \qquad\forall {\bf X},{\bf Y}\in\MBspace.
\end{align*}
When $(\alpha,\beta)$ and $\{(p_j,q_j)\}_{j=1}^J$ are conjugate pairs, $\|\cdot\|_{{\bf w}, {\bf p}}$ and $\|\cdot\|_{{\bf v}, {\bf q}}$ with
\begin{align*}
{\bf w}:= (\alpha,L_1,\ldots,L_J),\quad  {\bf v} := (\beta,L_1^{-1},\ldots,L_J^{-1}),\quad    {\bf p}:=(p_1,\ldots,p_J),\quad {\bf q}:=(q_1,\ldots,q_J)
\end{align*}
form a dual pair in the sense
\begin{align*}
    \|{\bf G}\|_{{\bf v}, {\bf q}} = \max_{\|{\bf X}\|_{{\bf w}, {\bf p}}\le 1} \langle {\bf G}, {\bf X}\rangle\qquad\forall {\bf G}\in\MBspace,
\end{align*}
which holds true by applying the trace Hölder-type inequalities twice
\begin{align*}
    \langle {\bf G}, {\bf X}\rangle = \sum_{j=1}^J\langle [G]_j, [X]_j\rangle \le \sum_{j=1}^J \|[G]_j\|_{S_{q_j}} \|[X]_j\|_{S_{p_j}} \le  \|{\bf G}\|_{{\bf v}, {\bf q}} \|{\bf X}\|_{{\bf w}, {\bf p}}\qquad\forall {\bf X}, {\bf G}\in\MBspace.
\end{align*}

\section{Oracles}\label{sec:oracle}

Deep learning optimization can be viewed through the lens of local search oracles, in which each iteration produces a parameter update that stays within a controlled distance of the current parameters. Because of the highly overparameterized and heterogeneous nature of modern neural networks, the choice of norm or distance metric used to guide parameter updates plays a central role in determining algorithmic performance.

In this section, we investigate the classical ball-oracle and regularization-oracle strategies for implementing local search, with distance-control norms chosen as Schatten-family norms for single-block spaces, and mixed norms for multi-block spaces as discussed in Section \ref{sec:not-pre}. We derive closed-form selections for the resulting oracles. 


\subsection{Single-block oracles}

This subsection concerns the single-block $S_p$-norm-based ball oracles and regularization oracles, which are defined as
\begin{align}\label{def:sol-ball-reg}
\Delta_p^{\rm b}(M) = \argmin_{\|Z\|_{S_p}\le 1}\ \langle M,Z\rangle,\quad \Delta_p^{\rm r}(M) = \argmin_{Z\in\R^{m\times n}} \bigg\{ \langle M, Z\rangle + \frac{1}{2}\|Z\|_{S_p}^2\bigg\}\qquad\forall M\in\R^{m\times n}.  
\end{align}

The next lemma provides the closed-form expressions for the above oracles.
\begin{lemma}\label{lem:single-block-oracle}
Let $M\in\mathbb{R}^{m\times n}$ be given, let $p\in[2,\infty]$ and
$q\in[1,2]$ be a conjugate pair, and let $\Delta_p^{\rm b}(\cdot)$ and
$\Delta_p^{\rm r}(\cdot)$ be defined in \eqref{def:sol-ball-reg}. If
$M=0$, we take the selections $\Delta_p^{\rm b}(0)=0$ and
$\Delta_p^{\rm r}(0)=0$. If $M\neq0$, one valid selection is given by
\begin{align}\label{def:Delta1M-Delta2M}
\Delta_p^{\rm b}(M) = - \frac{M(M^TM)^{\frac{q-2}{2}}}{\|M\|_{S_q}^{q-1}},\quad \Delta_p^{\rm r}(M) = \|M\|_{S_q}\cdot \Delta_p^{\rm b}(M),  
\end{align}
where any negative matrix power is interpreted through pseudo-inverse.
\end{lemma}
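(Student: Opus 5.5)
The plan is to work in the SVD of $M$ and reduce both oracles to the duality relation \eqref{ineq:dual-Sp-norm}. Write the compact SVD $M=U\Sigma V^T$ with $\Sigma=\mathrm{diag}(\sigma_1,\ldots,\sigma_r)$ and $\sigma_i>0$. Then $M^TM=V\Sigma^2V^T$, so $(M^TM)^{\frac{q-2}{2}}=V\Sigma^{q-2}V^T$ under the pseudo-inverse convention, because $q-2\le 0$. The candidate direction then simplifies to
\[
\Delta_p^{\rm b}(M)=-\frac{U\Sigma^{q-1}V^T}{\|M\|_{S_q}^{q-1}}.
\]
When $q=1$ (i.e.\ $p=\infty$) this reads $-UV^T$, with $\Sigma^0$ understood as the identity on the range. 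That is the Muon direction, and it is consistent with the formula.

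\textbf{Ball oracle.} First I will check feasibility. The singular values of $Z^\star:=\Delta_p^{\rm b}(M)$ are $\sigma_i^{q-1}/\|M\|_{S_q}^{q-1}$, so for $p<\infty$
\[
\|Z^\star\|_{S_p}^p=\frac{\sum_i\sigma_i^{(q-1)p}}{\|M\|_{S_q}^{(q-1)p}}.
\]
Conjugacy gives $(q-1)p=q$, so this equals $\|M\|_{S_q}^q/\|M\|_{S_q}^q=1$. For $p=\infty$ the norm is $1$ directly. Next I will compute the objective: $\langle M,Z^\star\rangle=-\mathrm{Tr}(\Sigma^{q})/\|M\|_{S_q}^{q-1}=-\|M\|_{S_q}$. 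By \eqref{ineq:dual-Sp-norm}, $\min_{\|Z\|_{S_p}\le1}\langle M,Z\rangle=-\|M\|_{S_q}$, so $Z^\star$ attains the minimum. This argument needs only the trace Hölder inequality already stated in the paper.

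\textbf{Regularization oracle.} I will split the minimization into a magnitude and a unit direction. Writing $Z=tW$ with $t\ge0$ and $\|W\|_{S_p}\le 1$,
\[
\langle M,Z\rangle+\tfrac12\|Z\|_{S_p}^2\ \ge\ -t\|M\|_{S_q}+\tfrac12 t^2,
\]
using duality again together with $t=\|Z\|_{S_p}$. The right-hand side is minimized at $t=\|M\|_{S_q}$, with value $-\tfrac12\|M\|_{S_q}^2$, which gives a lower bound on the objective. The candidate $Z=\|M\|_{S_q}\Delta_p^{\rm b}(M)$ has $S_p$-norm $\|M\|_{S_q}$ and inner product $-\|M\|_{S_q}^2$, so it attains this bound. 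The case $M=0$ is trivial in both oracles, since $Z=0$ is optimal.

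The only delicate step, rather than a real obstacle, is handling the pseudo-inverse power and the $q=1$ edge case. I will make sure the SVD manipulation of $M(M^TM)^{(q-2)/2}$ is justified on the range of $V$. I will also treat $p=\infty$ separately in the norm computation, since there the $\ell_p$ sum is replaced by a max and all the nonzero singular values equal one.
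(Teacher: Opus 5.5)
Your proposal is correct and follows the paper's proof essentially step for step. Both arguments work in the SVD, verify $\|Z^\star\|_{S_p}=1$ using $(q-1)p=q$ (treating $q=1$ separately), show $\langle M,Z^\star\rangle=-\|M\|_{S_q}$ and compare with the trace H\"older lower bound, and then handle the regularization oracle by the same $Z=tW$ decomposition with optimal $t=\|M\|_{S_q}$. Your explicit check that $M(M^TM)^{\frac{q-2}{2}}=U\Sigma^{q-1}V^T$ under the pseudo-inverse convention is a small detail that the paper leaves implicit.
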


\subsection{Multi-block oracles}

This subsection concerns the multi-block mixed-norm-based ball oracle and regularization oracle, which are defined as
\begin{align}\label{def:sol-ball-reg-multi}
    \Delta_{{\bf w},{\bf p}}^{\rm b}({\bf M}) = \argmin_{\|{\bf Z}\|_{{\bf w},{\bf p}}\le 1}\, \langle {\bf M}, {\bf Z}\rangle,\quad \Delta_{{\bf w},{\bf p}}^{\rm r}({\bf M}) = \argmin_{{\bf Z}\in\MBspace} \bigg\{ \langle {\bf M}, {\bf Z}\rangle + \frac{1}{2}\|{\bf Z}\|_{{\bf w},{\bf p}}^2\bigg\}\quad\forall {\bf M}\in\MBspace,
\end{align}
where ${\bf p}=(p_1,\ldots,p_J)$ is a collection of extended real numbers in $[2,\infty]$, and ${\bf w}=(\alpha,L_1,\ldots,L_J)$ with $\alpha\in[2,\infty]$ and $\{L_j\}_{j=1}^J$ being a set of positive numbers. 

The following lemma gives the closed-form expressions for the above oracles.

\begin{lemma}\label{lem:mult-block-subproblem}
Let ${\bf M}\in\MBspace$ be given, and let
$\Delta_{{\bf w},{\bf p}}^{\rm b}(\cdot)$ and
$\Delta_{{\bf w},{\bf p}}^{\rm r}(\cdot)$ be defined in
\eqref{def:sol-ball-reg-multi}. Let $\|\cdot\|_{{\bf v}, {\bf q}}$ be
the dual norm of $\|\cdot\|_{{\bf w}, {\bf p}}$, where
${\bf q}=(q_1,\ldots,q_J)$ and
${\bf v}=(\beta,L_1^{-1},\ldots,L_J^{-1})$, with
$\{(p_j,q_j)\}_{j=1}^J$ and $(\alpha,\beta)$ being conjugate pairs. If
$\|{\bf M}\|_{{\bf v},{\bf q}}=0$, we take the selections
$\Delta_{{\bf w},{\bf p}}^{\rm b}({\bf M})=0$ and
$\Delta_{{\bf w},{\bf p}}^{\rm r}({\bf M})=0$. If
$\|{\bf M}\|_{{\bf v},{\bf q}}>0$, one valid selection of the ball oracle is
given by
\begin{align}\label{def:multi-ball-oracle}
\big[\Delta_{{\bf w},{\bf p}}^{\rm b}({\bf M})\big]_j
=
L_j^{-1/2}
\left(
\frac{L_j^{-1/2}\|[M]_j\|_{S_{q_j}}}
{\|{\bf M}\|_{{\bf v},{\bf q}}}
\right)^{\beta-1}
\Delta_{p_j}^{\rm b}([M]_j),
\qquad j\in[J],
\end{align}
Moreover, one valid selection of the regularization oracle is
\begin{align}\label{def:multi-reg-oracle}
\big[\Delta_{{\bf w},{\bf p}}^{\rm r}({\bf M})\big]_j
=
\|{\bf M}\|_{{\bf v},{\bf q}}\,
\big[\Delta_{{\bf w},{\bf p}}^{\rm b}({\bf M})\big]_j,
\qquad j\in[J].
\end{align}
\end{lemma}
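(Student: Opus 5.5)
The plan is to reduce both oracles to the single-block case (Lemma~\ref{lem:single-block-oracle}) combined with a scalar $\ell_\alpha$--$\ell_\beta$ duality computation. The case $\|{\bf M}\|_{{\bf v},{\bf q}}=0$ is immediate: then ${\bf M}=0$, every feasible point is optimal for the ball problem, and $0$ minimizes $\frac12\|{\bf Z}\|_{{\bf w},{\bf p}}^2$. So assume $\|{\bf M}\|_{{\bf v},{\bf q}}>0$.

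\emph{Ball oracle.} By the dual-pair identity in Section~\ref{sec:not-pre}, the optimal value of the ball problem is $-\|{\bf M}\|_{{\bf v},{\bf q}}$. It therefore suffices to show that the proposed ${\bf Z}$ is feasible and attains this value. Write $a_j:=L_j^{-1/2}\|[M]_j\|_{S_{q_j}}$, so that $\|{\bf M}\|_{{\bf v},{\bf q}}=\|a\|_\beta$.

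From Lemma~\ref{lem:single-block-oracle} I will use two facts for $[M]_j\neq0$: $\|\Delta_{p_j}^{\rm b}([M]_j)\|_{S_{p_j}}=1$ and $\langle [M]_j,\Delta_{p_j}^{\rm b}([M]_j)\rangle=-\|[M]_j\|_{S_{q_j}}$. When $[M]_j=0$, the block is $0$ and contributes nothing. Then:
\begin{itemize}
\item The scaled block norm is $L_j^{1/2}\|[Z]_j\|_{S_{p_j}}=(a_j/\|a\|_\beta)^{\beta-1}$.
\item Using $(\beta-1)\alpha=\beta$, we get $\|{\bf Z}\|_{{\bf w},{\bf p}}^\alpha=\sum_j a_j^\beta/\|a\|_\beta^\beta=1$.
\item The objective is $\langle{\bf M},{\bf Z}\rangle=-\sum_j a_j\,(a_j/\|a\|_\beta)^{\beta-1}=-\|a\|_\beta$.
\end{itemize}

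The endpoint cases need separate attention. For $\alpha=\infty$ ($\beta=1$), the exponent $\beta-1=0$, so each nonzero block has scaled norm $1$ and the sup is $1$. For $\alpha=2$, $\beta=2$, nothing degenerates. I also have to read $0^{0}$ as $0$ for blocks with $[M]_j=0$; I will state this convention explicitly.

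\emph{Regularization oracle.} I will use the standard reduction for a norm $\|\cdot\|$ with dual $\|\cdot\|_*$. Split ${\bf Z}=t{\bf U}$ with $t\ge0$ and $\|{\bf U}\|\le1$. Then
\[
\min_{\bf Z}\{\langle{\bf M},{\bf Z}\rangle+\tfrac12\|{\bf Z}\|^2\}=\min_{t\ge0}\{-t\|{\bf M}\|_*+\tfrac12 t^2\},
\]
where the inner minimum over ${\bf U}$ is attained by the ball oracle. The right side is minimized at $t=\|{\bf M}\|_*$ with value $-\frac12\|{\bf M}\|_*^2$.

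To make this rigorous, I will use the lower bound $\langle{\bf M},{\bf Z}\rangle\ge-\|{\bf M}\|_*\|{\bf Z}\|$, which follows from the Hölder chain in Section~\ref{sec:not-pre}. This gives objective $\ge -s\|{\bf M}\|_*+\frac12 s^2\ge-\frac12\|{\bf M}\|_*^2$ with $s=\|{\bf Z}\|$. Plugging in ${\bf Z}=\|{\bf M}\|_{{\bf v},{\bf q}}\Delta^{\rm b}_{{\bf w},{\bf p}}({\bf M})$ and using the ball computation above attains this bound, which gives \eqref{def:multi-reg-oracle}.

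The main obstacle is bookkeeping rather than any deep step: the exponent identities, the zero-block and $\beta=1$ conventions, and correctly invoking the norm and inner-product identities for the single-block selection. If the paper's proof of Lemma~\ref{lem:single-block-oracle} does not record these identities, I will verify them directly via the SVD $M=U\Sigma V^T$. In that form, $\Delta_p^{\rm b}(M)=-U\Sigma^{q-1}V^T/\|\sigma\|_q^{q-1}$, whose $S_p$ norm is $\|\sigma^{q-1}\|_p/\|\sigma\|_q^{q-1}=1$ since $(q-1)p=q$, and whose inner product with $M$ is $-\|\sigma\|_q^q/\|\sigma\|_q^{q-1}=-\|\sigma\|_q$.
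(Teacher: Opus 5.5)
Your proposal is correct and follows essentially the same route as the paper. Both use the H\"older lower bound and check that the rescaled block-wise single-block selections are feasible with $\|{\bf Z}\|_{{\bf w},{\bf p}}=1$ (via $\alpha(\beta-1)=\beta$, with $\beta=1$ handled separately) and attain $-\|{\bf M}\|_{{\bf v},{\bf q}}$, then use the radial decomposition ${\bf Z}=t{\bf W}$ for the regularization oracle. (Minor points: no $0^0$ convention is needed, since $\Delta^{\rm b}_{p_j}(0)=0$ already zeros out those blocks. Your fallback SVD check via $(q-1)p=q$ needs the case $q=1$, $p=\infty$ treated separately, as the paper does.)
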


\section{$S_p$-$S_q$ methods for single-block problems}\label{sec:single}

In this section, we propose $S_p$-$S_q$ methods\footnote{We call them the $S_p$-$S_q$ methods because their oracles can be formulated either as an $S_p$-norm ball-constrained subproblem or as an $S_p$-norm-squared regularized subproblem, while their closed-form solutions can be computed using the dual $S_q$-norm and the dual exponent $q$ (see Lemma \ref{lem:single-block-oracle}).} with momentum for solving the following single-block matrix optimization problem:
\begin{align}\label{pb:matrix-opt}
\min_{X\in\R^{m\times n}} f(X).    
\end{align}
Without loss of generality, we assume $m\le n$.

We make the following assumptions on problem \eqref{pb:matrix-opt} throughout this section.

\begin{assumption}\label{asp:basic}
\begin{enumerate}[leftmargin = .7cm]
    \item[{(a)}] There exists a finite $f_{\mathrm{low}}$ such that $f(X)\ge f_{\mathrm{low}}$ for all $X\in\mathbb{R}^{m\times n}$.
    \item[{(b)}] There exist $p\in[2,\infty]$, $q\in[1,2]$, and $L_{p,q}>0$ such that $p,q$ are a conjugate pair and $\|\nabla f(Y) - \nabla f(X)\|_{S_q} \le L_{p,q} \|Y - X\|_{{S_p}}$ for all $X,Y\in\mathbb{R}^{m\times n}$.
    \item[{(c)}] There exists  $V\in\R^{m\times m}$ such that the stochastic gradient estimator $G:\R^{m\times n}\times\Xi\to\R^{m\times n}$ satisfies the following covariance bound for all $X\in\mathbb{R}^{m\times n}$:
    \begin{align*}
        \E[G(X;\xi)] = \nabla f(X),\quad \E[(G(X;\xi)-\nabla f(X))(G(X;\xi)-\nabla f(X))^T]\preceq VV^T.
    \end{align*} 
\end{enumerate}
\end{assumption}

\begin{remark}
(i) Assumption \ref{asp:basic}(b) states the Lipschitz continuity of $\nabla f$ with respect to the $S_p$- and $S_q$-norms. This $S_p$-$S_q$ Lipschitz assumption is consistent with the $S_p$-$S_q$ methods developed in this section, whose design and analysis employ the $S_p$-norm in the primal space (i.e., the $X$-variable space) and the $S_q$-norm in the dual space (i.e., the gradient space). It generalizes the Lipschitz assumptions adopted for Muon and SGD (e.g., see \cite{bernstein2024old}), and allows us to unify their study. As a result of Assumption \ref{asp:basic}(b), the quadratic upper bound holds:
\begin{align}\label{ineq:desc-lpq}
f(Y) \le f(X) + \langle\nabla f(X), Y-X\rangle + \frac{L_{p,q}}{2}\|Y-X\|_{S_p}^2,
\end{align}
due to the duality relationship between the $S_p$ and $S_q$-norms; see \eqref{ineq:dual-Sp-norm}.\vspace{0.1cm}

\noindent(ii) Assumption \ref{asp:basic}(c) means that the stochastic gradient $G$ is unbiased and its covariance is bounded above by $VV^T$ in the semidefinite partial order. The latter condition has previously been imposed in fine-grained analyses of matrix-variate methods; see, e.g., \cite{an2025asgo}. This matrix-valued covariance bound preserves the directional and spectral information of the stochastic noise, thereby facilitating a unified analysis framework for the $S_p$-$S_q$ methods developed in this section.

Moreover, the matrix-valued covariance bound assumption is not restrictive. Indeed, it is closely related to the bounded-variance condition $\E[\|G(X;\xi)-\nabla f(X)\|_F^2]\le\sigma_F^2$ for some $\sigma_F>0$. On the one hand, if $\E[(G(X;\xi)-\nabla f(X))(G(X;\xi)-\nabla f(X))^T]\preceq VV^T$, then taking the trace on both sides yields $\E[\|G(X;\xi)-\nabla f(X)\|_F^2]\le\|V\|_F^2$. On the other hand, $\E[\|G(X;\xi)-\nabla f(X)\|_F^2]\le\sigma^2_F$ 
implies $\E[(G(X;\xi)-\nabla f(X))(G(X;\xi)-\nabla f(X))^T]\preceq \sigma_F^2 I$.


\end{remark}

Our $S_p$-$S_q$ methods use the momentum scheme for constructing directions, which generates
\begin{align*}
M_k = (1-\theta) M_{k-1} + \frac{\theta}{B}\sum_{i=1}^B G(X_k;\xi_{k,i})\qquad\forall k\ge1,
\end{align*}
where $\{\xi_{k,i}\}_{i=1}^B$ is the set of independent samples drawn at iteration $k$. For convenience in the subsequent analysis, we adopt the following shorthand
\begin{align}
&\bar G_k := \frac{1}{B}\sum_{i=1}^B G(X_k;\xi_{k,i}),\quad \Delta M_k := M_k - \nabla f(X_k),\quad \Delta G_k := \bar G_k - \nabla f(X_k)\qquad\forall k\ge0.\label{def:Delta-not}
\end{align}
Let $\{\mathcal F_k\}_{k\ge-1}$ be defined by
$\mathcal F_{-1}:=\sigma(X_0)$ and
$\mathcal F_k:=\sigma(X_0,\{\xi_{s,i}:0\le s\le k,\ 1\le i\le B\})$  for
$k\ge0$. It then follows from Assumption \ref{asp:basic}(c) that
\begin{align*}
    \E[\Delta G_k\mid\mathcal F_{k-1}]=0,\quad \E[\Delta G_k\Delta G_k^T\mid\mathcal F_{k-1}]\preceq VV^T/B\qquad\forall k\ge0.
\end{align*}

\subsection{Martingale moment inequality}\label{subsec:mmi-single}

In this subsection, we establish a matrix martingale moment inequality that will be crucial in controlling the noise bound in our later algorithmic analysis.

The following lemma, which generalizes \cite[Lemma 8]{an2025asgo}, provides a useful property of Schatten norms. Its proof is deferred to Section \ref{sec:pf-single-mmi}.

\begin{lemma}\label{lem:nuc-ppt}
    Let $A\in\mathbb{R}^{m\times n}$ and $q \in [1, 2]$ be given. Then, for any symmetric $\Lambda \in\mathbb{R}^{m\times m}$ satisfying $\Lambda \succ 0$, it holds that
    \begin{align*}
    \|A\|_{S_q}^2 \le \|\Lambda\|_{S_{\frac{q}{2-q}}}\mathrm{Tr}(AA^T\Lambda^{-1}).
    \end{align*}
    Here $S_{\frac{q}{2-q}}$ is interpreted as $S_\infty$ when $q=2$.
\end{lemma}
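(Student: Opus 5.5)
Set $P:=AA^T\succeq 0$. Then $\|A\|_{S_q}^2=\big(\mathrm{Tr}(P^{q/2})\big)^{2/q}=\|P\|_{S_{q/2}}$, where for $q<2$ this quantity is only a quasi-norm of the eigenvalues; we use it purely as notation. The plan is to split $P^{q/2}$ into a factor involving $\Lambda^{-1}$ and a factor involving $\Lambda$, and then apply a matrix Hölder inequality. The main tool is the trace Hölder inequality $|\mathrm{Tr}(XY)|\le\|X\|_{S_r}\|Y\|_{S_s}$ for $1/r+1/s=1$, with $r,s\in[1,\infty]$, in the form
\begin{align*}
\mathrm{Tr}(P^{q/2}) = \mathrm{Tr}\big((\Lambda^{-1/2}P\Lambda^{-1/2})^{q/2}\,\cdot\,\text{something}\big).
\end{align*}
Since $P$ and $\Lambda$ need not commute, this cannot be done literally, so I would proceed as follows.

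\textbf{Case $q=2$.} The claim reads $\mathrm{Tr}(P)\le\|\Lambda\|_{S_\infty}\mathrm{Tr}(P\Lambda^{-1})$. This holds because $\Lambda^{-1}\succeq \|\Lambda\|_{S_\infty}^{-1}I$ and $P\succeq 0$.

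\textbf{Case $q\in[1,2)$.} Write $A=\Lambda^{1/2}(\Lambda^{-1/2}A)=:\Lambda^{1/2}B$, so that $\mathrm{Tr}(AA^T\Lambda^{-1})=\|B\|_F^2=\|B\|_{S_2}^2$. Now apply the Hölder inequality for Schatten norms of products, $\|XY\|_{S_q}\le\|X\|_{S_r}\|Y\|_{S_2}$ with $1/q=1/r+1/2$. This gives
\begin{align*}
\|A\|_{S_q}\le\|\Lambda^{1/2}\|_{S_r}\|B\|_{S_2},\qquad r=\frac{2q}{2-q}.
\end{align*}
Squaring, and using $\|\Lambda^{1/2}\|_{S_r}^2=\big(\sum_i\lambda_i^{r/2}\big)^{2/r}=\|\Lambda\|_{S_{r/2}}$ with $r/2=q/(2-q)$, we obtain exactly $\|A\|_{S_q}^2\le \|\Lambda\|_{S_{q/(2-q)}}\mathrm{Tr}(AA^T\Lambda^{-1})$. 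The endpoint $q=2$ also fits this argument, since then $r=\infty$, so the case split is only for clarity.

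\textbf{Main obstacle.} The only nontrivial ingredient is the generalized Hölder inequality $\|XY\|_{S_q}\le\|X\|_{S_r}\|Y\|_{S_2}$ for $q\ge1$. I would cite it as standard, or derive it from Horn's inequality: $\prod_{i\le k}\sigma_i(XY)\le\prod_{i\le k}\sigma_i(X)\sigma_i(Y)$ implies weak log-majorization of $\sigma(XY)$ by $\sigma(X)\sigma(Y)$, hence weak majorization. The map $t\mapsto t^q$ is convex and increasing for $q\ge1$, so $\sum_i\sigma_i(XY)^q\le\sum_i(\sigma_i(X)\sigma_i(Y))^q$. The vector Hölder inequality with exponents $r/q$ and $2/q$ then completes the bound. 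The requirement $q\ge1$ is exactly where the restriction $q\in[1,2]$ enters. Finally, $\Lambda\succ0$ guarantees that $\Lambda^{\pm1/2}$ exist, so no pseudo-inverse subtleties arise.
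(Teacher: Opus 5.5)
Your proof is correct, but it takes a different route from the paper. The paper first assumes $\Lambda=\mathrm{diag}(\lambda_1,\ldots,\lambda_m)$. It bounds $\mathrm{Tr}((AA^T)^{q/2})\le\sum_i[AA^T]_{ii}^{q/2}$ by concavity of $t\mapsto t^{q/2}$, which amounts to the diagonal of a p.s.d.\ matrix being majorized by its eigenvalues. It then applies the scalar H\"older inequality with exponents $2/q$ and $2/(2-q)$ to $\sum_i(\lambda_i^{-1}[AA^T]_{ii})^{q/2}\lambda_i^{q/2}$. Finally it removes the diagonal assumption by passing to $U^TA$, where $U$ is the eigenbasis of $\Lambda$, and using unitary invariance. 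You instead write $A=\Lambda^{1/2}(\Lambda^{-1/2}A)$, observe that $\mathrm{Tr}(AA^T\Lambda^{-1})=\|\Lambda^{-1/2}A\|_F^2$, and invoke the product H\"older inequality $\|XY\|_{S_q}\le\|X\|_{S_r}\|Y\|_{S_2}$ with $r=2q/(2-q)$. Your route needs no diagonalization or case split, and it shows that the lemma is simply Schatten-norm H\"older applied to the ``whitened'' matrix $\Lambda^{-1/2}A$. The cost is a heavier ingredient: Horn's multiplicative singular-value inequality plus the passage from weak log-majorization to weak majorization. The paper needs only Jensen's inequality with doubly stochastic weights and scalar H\"older. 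One minor correction to your closing commentary: $q\ge1$ is not where the hypothesis really bites. Applying $t\mapsto e^{qt}$ directly to the log-majorization gives the product inequality for every $q>0$, and the paper's argument likewise works verbatim for $q\in(0,2)$. The restriction $q\in[1,2]$ matters elsewhere in the paper, so that $S_q$ is a genuine norm dual to $S_p$.
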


The following lemma establishes a matrix martingale moment inequality with respect to the Schatten-$q$ norms for $q \in [1, 2]$. Its proof can be found in Section \ref{sec:pf-single-mmi}.

\begin{lemma}\label{lem:concentration-Sp}
Consider a sequence $\{A_t\}$ of random $m \times n$ matrices and let $\{\mathcal{F}_t\}$ denote its natural filtration, i.e., $\mathcal{F}_t := \sigma(A_0,\ldots,A_t)$. Assume $\{A_t\}$ is a martingale difference sequence, i.e., $\mathbb{E}[A_0] = 0$ and, for each $t\ge1$, $\E[A_t\ |\mathcal{F}_{t-1}]=0$. Furthermore, assume that $\mathbb{E}[A_t A_t^T]$ is well-defined for every $t$. Then, for any symmetric $\Lambda \succ 0$ and $q \in [1,2]$, it holds that
\begin{align}
 \E\Big[\Big\|\sum_{t=0}^{k-1}A_t\Big\|_{S_q}^2\Big] \le \|\Lambda\|_{S_{\frac{q}{2-q}}}\sum_{t=0}^{k-1}\mathrm{Tr}(\E[A_tA_t^T]\Lambda^{-1})\qquad\forall \;k\ge1. \label{ineq:concentration-Sp}
\end{align}
\end{lemma}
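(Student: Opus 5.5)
The plan is to reduce the statement to Lemma~\ref{lem:nuc-ppt} applied pathwise, followed by an orthogonality argument for the martingale increments. Set $S_k:=\sum_{t=0}^{k-1}A_t$. Because $\Lambda\succ0$ is deterministic, Lemma~\ref{lem:nuc-ppt} applies to the realization $A=S_k$ and gives
\begin{align*}
\|S_k\|_{S_q}^2\le \|\Lambda\|_{S_{\frac{q}{2-q}}}\mathrm{Tr}(S_kS_k^T\Lambda^{-1})
\end{align*}
almost surely. Taking expectations, and using that the trace and the factor $\|\Lambda\|_{S_{q/(2-q)}}$ are deterministic and linear, it then suffices to show the Frobenius-type identity
\begin{align*}
\E[\mathrm{Tr}(S_kS_k^T\Lambda^{-1})]=\sum_{t=0}^{k-1}\mathrm{Tr}(\E[A_tA_t^T]\Lambda^{-1}).
\end{align*}

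To prove this identity, expand $S_kS_k^T=\sum_{s,t}A_sA_t^T$. For the cross terms with $s<t$, we have $\E[\mathrm{Tr}(A_sA_t^T\Lambda^{-1})]=\E[\mathrm{Tr}(\Lambda^{-1}A_s\,\E[A_t\mid\mathcal F_{t-1}]^T)]=0$. This holds because $A_s$ is $\mathcal F_{t-1}$-measurable and the map $A_t\mapsto \mathrm{Tr}(\Lambda^{-1}A_sA_t^T)$ is linear, so the conditional expectation passes inside. The terms with $s>t$ are handled symmetrically, using transpose and the cyclicity of the trace. The diagonal terms give exactly $\mathrm{Tr}(\E[A_tA_t^T]\Lambda^{-1})$. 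Equivalently, one can write $\mathrm{Tr}(S_kS_k^T\Lambda^{-1})=\|\Lambda^{-1/2}S_k\|_F^2$ and run the usual Pythagorean induction $\E\|\Lambda^{-1/2}S_{k+1}\|_F^2=\E\|\Lambda^{-1/2}S_k\|_F^2+\E\|\Lambda^{-1/2}A_k\|_F^2$ on $k$. This is the square-expansion property that is available only in this weighted Frobenius geometry, which is why Lemma~\ref{lem:nuc-ppt} is used to move into it.

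The main obstacle I expect is integrability rather than the algebra. The cross terms need $\E|\mathrm{Tr}(\Lambda^{-1}A_sA_t^T)|<\infty$ so that conditioning is justified. I would get this from Cauchy--Schwarz, $|\mathrm{Tr}(\Lambda^{-1}A_sA_t^T)|\le\|\Lambda^{-1}\|_{S_\infty}\|A_s\|_F\|A_t\|_F$, together with $\E\|A_t\|_F^2=\mathrm{Tr}(\E[A_tA_t^T])<\infty$, which is what the hypothesis that $\E[A_tA_t^T]$ is well defined provides. Case $t=0$ uses $\E[A_0]=0$ in the same way. Combining the pathwise bound with the identity gives \eqref{ineq:concentration-Sp} for every $k\ge1$.
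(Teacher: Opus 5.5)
Your proposal is correct and follows the paper's proof. You apply Lemma~\ref{lem:nuc-ppt} pathwise to $S_k=\sum_{t=0}^{k-1}A_t$, take expectations, and kill the cross terms $\E[A_sA_t^T]$ for $s\neq t$ via the martingale-difference property; your Cauchy--Schwarz integrability check is a detail the paper leaves implicit.
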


\subsection{An $S_p$-$S_q$ method with ball oracle}\label{sec:lmo}

In this subsection, we propose an $S_p$-$S_q$ method with ball oracle and establish its convergence guarantee. At each iteration $k$, this method first constructs a mini-batch stochastic gradient $\bar G_k$ based on a set of stochastic gradients $\{G(X_k;\xi_{k,i})\}$, and then performs a momentum update to obtain $M_k$ as a weighted average of past stochastic gradients. The algorithm then updates the next iterate $X_{k+1}$ using an $S_p$-norm-based ball oracle. Details of this method are presented in Algorithm \ref{alg:sdm-linear}.

\begin{algorithm}[!htbp] 
\caption{An $S_p$-$S_q$ method with ball oracle} 
\label{alg:sdm-linear} 
\begin{algorithmic}[0] 
\State \textbf{Input:} starting iterate $X_0\in\mathbb{R}^{m \times n}$, momentum parameter $\theta\in(0,1]$, Schatten norm parameter $p \in [2, \infty]$, step size $\eta>0$, batch size $B\ge1$. 
\For{$k=0,1,2,\ldots$} 
\State Sample a mini-batch $\{\xi_{k,i}\}_{i=1}^B$ and compute $\bar G_k=\frac{1}{B}\sum_{i=1}^B G(X_k;\xi_{k,i})$.
\State Compute the search direction: 
\begin{align}\label{update-mk-1} 
M_k = \begin{cases}
(1 - \theta) M_{k-1} + \theta \bar G_k, &\text{if } k \geq 1\\
\bar G_0 & \text{if } k =0.
\end{cases}
\end{align}
\State Update the next iterate:
\begin{align}\label{eq:update-xk-1}
X_{k+1}  = \argmin_{\|X-X_k\|_{S_p}\le\eta} \langle M_k, X-X_k\rangle.
\end{align} 
\EndFor
\end{algorithmic} 
\end{algorithm}  

The following lemma provides a descent inequality for the function value when updating with the $S_p$-norm-based ball oracle of Algorithm \ref{alg:sdm-linear}. Its proof is relegated to Section \ref{sec:pf-single-lmo}.

\begin{lemma}\label{lem:desc-ineq-Sp-ball}
Suppose that Assumption \ref{asp:basic} holds. Let $\{(X_k,M_k)\}$ be generated by Algorithm \ref{alg:sdm-linear} with step size $\eta>0$, and let $\{\Delta M_k\}$ be defined in \eqref{def:Delta-not} and $L_{p,q}$ be given in Assumption \ref{asp:basic}. Then, 
\begin{align}\label{ineq:matrix-sign-descent}
f(X_{k+1}) \le f(X_k) - \eta \|\nabla f(X_k)\|_{S_q} + 2\eta \|\Delta M_k\|_{S_q} + \frac{L_{p,q}\eta^2}{2}\qquad\forall k\ge0. 
\end{align}
\end{lemma}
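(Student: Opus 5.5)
The plan is to apply the quadratic upper bound \eqref{ineq:desc-lpq} with $Y=X_{k+1}$ and $X=X_k$, and then bound the linear term through the optimality of the ball oracle. By \eqref{eq:update-xk-1}, the step is $X_{k+1}-X_k=\eta Z_k$ with $Z_k\in\Delta_p^{\rm b}(M_k)$ and $\|Z_k\|_{S_p}\le 1$. Plugging this into \eqref{ineq:desc-lpq} gives
\begin{align*}
f(X_{k+1})\le f(X_k)+\eta\langle \nabla f(X_k),Z_k\rangle+\frac{L_{p,q}\eta^2}{2},
\end{align*}
since $\|\eta Z_k\|_{S_p}^2\le\eta^2$. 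The rest of the argument consists of showing that $\langle \nabla f(X_k),Z_k\rangle\le-\|\nabla f(X_k)\|_{S_q}+2\|\Delta M_k\|_{S_q}$.

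Using $\nabla f(X_k)=M_k-\Delta M_k$, I split the inner product as
\[
\langle \nabla f(X_k),Z_k\rangle=\langle M_k,Z_k\rangle-\langle\Delta M_k,Z_k\rangle.
\]
\begin{itemize}
\item For the first term, the duality \eqref{ineq:dual-Sp-norm} together with the minimality of $Z_k$ over the unit $S_p$-ball gives $\langle M_k,Z_k\rangle=-\max_{\|Z\|_{S_p}\le1}\langle M_k,Z\rangle=-\|M_k\|_{S_q}$. This also holds in the degenerate case $M_k=0$ with the selection $Z_k=0$. The explicit formula in Lemma \ref{lem:single-block-oracle} confirms it: $\langle M,-M(M^TM)^{(q-2)/2}\rangle/\|M\|_{S_q}^{q-1}=-\|M\|_{S_q}^q/\|M\|_{S_q}^{q-1}$.
\item For the second term, Hölder's inequality, which is again \eqref{ineq:dual-Sp-norm}, gives $-\langle\Delta M_k,Z_k\rangle\le\|\Delta M_k\|_{S_q}\|Z_k\|_{S_p}\le\|\Delta M_k\|_{S_q}$.
\end{itemize}

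Finally, the triangle inequality gives $\|M_k\|_{S_q}\ge\|\nabla f(X_k)\|_{S_q}-\|\Delta M_k\|_{S_q}$. Hence
\[
\langle\nabla f(X_k),Z_k\rangle\le-\|\nabla f(X_k)\|_{S_q}+2\|\Delta M_k\|_{S_q},
\]
and multiplying by $\eta$ and substituting above yields \eqref{ineq:matrix-sign-descent}.

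There is no real obstacle in this argument. The only point needing care is that the argmin in \eqref{eq:update-xk-1} may be non-unique. The bound holds for any minimizer, because it uses only two facts: optimality attains the value $-\|M_k\|_{S_q}$, and feasibility gives $\|Z_k\|_{S_p}\le1$. Both hold for every selection.
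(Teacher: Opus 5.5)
Your proof is correct and follows the paper's argument: the quadratic upper bound \eqref{ineq:desc-lpq}, the oracle identity $\langle M_k,X_{k+1}-X_k\rangle=-\eta\|M_k\|_{S_q}$, the trace H\"older inequality applied to the $\Delta M_k$ term, and the triangle inequality. You use the feasibility bound $\|Z_k\|_{S_p}\le 1$ where the paper asserts $\|X_{k+1}-X_k\|_{S_p}=\eta$; this is a small refinement, since it also covers the degenerate selection $Z_k=0$ when $M_k=0$, where the paper's equality fails.
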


The next lemma gives an upper bound on the estimation error $\{\Delta M_k\}$ for the momentum directions generated by Algorithm \ref{alg:sdm-linear}. Its proof is relegated to Section \ref{sec:pf-single-lmo}.

\begin{lemma}\label{lem:momeutm-bd-Sp-ball}
Suppose that Assumption \ref{asp:basic} holds. Let $\{(X_k,M_k)\}$ be generated by Algorithm \ref{alg:sdm-linear} with input parameters $\eta>0$ and $\theta\in(0,1]$, and let $\{(\Delta M_k,\Delta G_k)\}$ be defined in \eqref{def:Delta-not} and $L_{p,q}$ be given in Assumption \ref{asp:basic}. Then, 
\begin{align}\label{ineq:akeyrelation}
\|\Delta M_k\|_{S_q} \le (1-\theta)^k\|\Delta M_0\|_{S_q} + \theta \Big\|\sum_{t=0}^{k-1}(1-\theta)^t\Delta G_{k-t}\Big\|_{S_q} + L_{p,q}\eta \sum_{t=0}^{k-1} (1-\theta)^{t+1}.   
\end{align}
\end{lemma}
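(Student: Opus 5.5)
We first derive a one-step recursion for the estimation error and then unroll it. For $k\ge1$, the momentum rule \eqref{update-mk-1} and the definitions in \eqref{def:Delta-not} give
\begin{align*}
\Delta M_k &= (1-\theta)M_{k-1} + \theta \bar G_k - \nabla f(X_k)\\
&= (1-\theta)\Delta M_{k-1} + \theta\Delta G_k + (1-\theta)\big(\nabla f(X_{k-1}) - \nabla f(X_k)\big).
\end{align*}
This uses $M_{k-1} = \Delta M_{k-1} + \nabla f(X_{k-1})$ and $\bar G_k = \Delta G_k + \nabla f(X_k)$. Set $D_k := \nabla f(X_{k-1})-\nabla f(X_k)$. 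The noise terms should be kept together inside a single norm, so we do not apply the triangle inequality at each step. Instead we unroll the linear recursion exactly down to $\Delta M_0$ (note $M_0=\bar G_0$ gives $\Delta M_0=\Delta G_0$). This yields
\begin{align*}
\Delta M_k = (1-\theta)^k \Delta M_0 + \theta\sum_{t=0}^{k-1}(1-\theta)^t \Delta G_{k-t} + \sum_{t=0}^{k-1}(1-\theta)^{t+1} D_{k-t}.
\end{align*}
This is verified by a routine induction on $k$. The base case $k=0$ is trivial with empty sums. For the inductive step, substitute the formula for $\Delta M_{k-1}$ into the recursion and reindex $t\mapsto t+1$.

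Next we take the $S_q$-norm and apply the triangle inequality only across the three groups of terms, and then termwise inside the third group. The middle group stays inside one norm, giving exactly $\theta\|\sum_{t=0}^{k-1}(1-\theta)^t\Delta G_{k-t}\|_{S_q}$. For the drift terms we use Assumption \ref{asp:basic}(b):
\begin{align*}
\|D_{k-t}\|_{S_q}\le L_{p,q}\|X_{k-t}-X_{k-t-1}\|_{S_p}.
\end{align*}
By \eqref{eq:update-xk-1}, each step satisfies $\|X_{j+1}-X_j\|_{S_p}\le \eta$, since the iterate is chosen in the $S_p$-ball of radius $\eta$ around $X_j$. Hence $\|D_{k-t}\|_{S_q}\le L_{p,q}\eta$. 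Summing with the weights $(1-\theta)^{t+1}$ gives the last term of \eqref{ineq:akeyrelation}.

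The only step needing care is the bookkeeping in the unrolled identity, namely getting the exponents $(1-\theta)^t$ on $\Delta G_{k-t}$ and $(1-\theta)^{t+1}$ on $D_{k-t}$ to match the statement. Keeping the noise sum un-split is deliberate: it is what later allows Lemma \ref{lem:concentration-Sp} to be applied to the martingale $\sum_t(1-\theta)^t\Delta G_{k-t}$.
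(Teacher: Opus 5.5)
Your proposal is correct and follows the paper's proof: the paper likewise unrolls the recursion $\Delta M_k=(1-\theta)\Delta M_{k-1}+\theta\Delta G_k+(1-\theta)(\nabla f(X_{k-1})-\nabla f(X_k))$ into the same closed-form identity, keeping the noise sum inside one norm. It then applies the triangle inequality and Assumption~\ref{asp:basic}(b) with the step-length bound on $\|X_{j+1}-X_j\|_{S_p}$. Your use of $\le\eta$ from feasibility, rather than the equality $\|X_{j+1}-X_j\|_{S_p}=\eta$, is sufficient and also covers the case $M_j=0$.
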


The next lemma gives an upper bound on the sum of the expected estimation error over the momentum directions generated by Algorithm \ref{alg:sdm-linear}. Its proof is relegated to Section \ref{sec:pf-single-lmo}.

\begin{lemma}\label{lem:F-con}
Suppose that Assumption \ref{asp:basic} holds. Let $\{(X_k,M_k)\}$ be generated by Algorithm \ref{alg:sdm-linear} with input parameters $\eta>0$, $\theta\in(0,1]$, and $B\ge1$. Let $\{\Delta M_k\}$ be defined in \eqref{def:Delta-not}, and let $L_{p,q}$ and $V$ be given in Assumption \ref{asp:basic}. Then,   
\begin{align}\label{ineq:expec-upbd-nuclear-norm}
\sum_{k=0}^{K-1}\E[\|\Delta M_k\|_{S_q}] \le \frac{\sigma_{q,B}}{\theta} + \sqrt{\theta} K \sigma_{q,B} + \frac{L_{p,q}\eta K}{\theta} \qquad\forall K\ge1,
\end{align}
where $\sigma_{q,B} := \|V\|_{S_q}/\sqrt{B}$.
\end{lemma}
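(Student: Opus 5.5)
Taking expectations in Lemma~\ref{lem:momeutm-bd-Sp-ball}, summing over $k=0,\ldots,K-1$, and bounding the three resulting terms separately should give the three terms of \eqref{ineq:expec-upbd-nuclear-norm}.

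\emph{Initial error.} Since $M_0=\bar G_0$, we have $\Delta M_0=\Delta G_0$. We need $\E[\|\Delta G_0\|_{S_q}]\le\sigma_{q,B}$. By Jensen, $\E[\|\Delta G_0\|_{S_q}]\le(\E[\|\Delta G_0\|_{S_q}^2])^{1/2}$. Apply Lemma~\ref{lem:nuc-ppt} with $\Lambda=(VV^T+\epsilon I)^{(2-q)/2}$ and let $\epsilon\downarrow0$; the case $q=2$ is handled by $\Lambda=I$, or the same choice read as $\Lambda=I$. With this choice, $\|\Lambda\|_{S_{q/(2-q)}}\to\|V\|_{S_q}^{2-q}$ and $\mathrm{Tr}(VV^T\Lambda^{-1}/B)\to\|V\|_{S_q}^q/B$, so the product is $\|V\|_{S_q}^2/B=\sigma_{q,B}^2$. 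The geometric factor then gives $\sum_k(1-\theta)^k\sigma_{q,B}\le\sigma_{q,B}/\theta$.

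\emph{Drift term.} We have $\sum_{t=0}^{k-1}(1-\theta)^{t+1}\le(1-\theta)/\theta\le1/\theta$. Summing over $k$ gives $L_{p,q}\eta K/\theta$.

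\emph{Martingale term.} This is the main step. The terms $A_t:=(1-\theta)^{k-1-t'}\Delta G_{t'}$, re-indexed in increasing time $t'=1,\ldots,k$, form a martingale difference sequence with respect to $\{\mathcal F_{t'}\}$. The deterministic weights preserve the martingale property. Lemma~\ref{lem:concentration-Sp} is stated for the natural filtration, but its proof presumably only uses the orthogonality of increments, so I would apply it with the same $\Lambda$ as above. Using the tower property and $\E[\Delta G_{t'}\Delta G_{t'}^T]\preceq VV^T/B$, this gives
\[
\E\Big[\Big\|\sum_{t=0}^{k-1}(1-\theta)^t\Delta G_{k-t}\Big\|_{S_q}^2\Big]\le \sigma_{q,B}^2\sum_{t=0}^{k-1}(1-\theta)^{2t}\le\frac{\sigma_{q,B}^2}{\theta}.
\]
Here we use $1-(1-\theta)^2\ge\theta$. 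By Jensen, the first moment is at most $\sigma_{q,B}/\sqrt\theta$, so after multiplying by $\theta$ each $k$ contributes $\sqrt\theta\,\sigma_{q,B}$, and summing gives $\sqrt\theta K\sigma_{q,B}$.

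\emph{Main obstacle.} The delicate points are two technicalities. First, the $\Lambda$ choice must handle a singular $VV^T$, which the $\epsilon$-regularization addresses since the bound is continuous in $\epsilon$. Second, Lemma~\ref{lem:concentration-Sp} must apply under the algorithm's filtration rather than the natural one, since the $\Delta G_{t'}$ depend on the $X_{t'}$. If the lemma's proof does not transfer directly, I would instead redo its one-step argument directly. That argument is to expand $\mathrm{Tr}((S+A)(S+A)^T\Lambda^{-1})$, note that the cross term vanishes in conditional expectation, and then invoke Lemma~\ref{lem:nuc-ppt} at the end.
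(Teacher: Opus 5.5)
Your proposal is correct and follows the paper's proof essentially step for step. You take expectations in Lemma~\ref{lem:momeutm-bd-Sp-ball}, bound $\E[\|\Delta M_0\|_{S_q}]$ and the weighted noise sum using Lemma~\ref{lem:concentration-Sp} (or Lemma~\ref{lem:nuc-ppt}) with $\Lambda_\varepsilon=(VV^T+\varepsilon I)^{1-q/2}$, let $\varepsilon\downarrow0$, apply Jensen, and then sum the geometric weights, exactly as the paper does.

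Two small remarks. First, the weight in your re-indexing should be $(1-\theta)^{k-t'}$, not $(1-\theta)^{k-1-t'}$; your displayed bound already uses the correct weights. Second, your filtration concern is moot: by the tower property, a martingale difference sequence for the algorithm's filtration is also one for its own natural filtration, so Lemma~\ref{lem:concentration-Sp} applies as stated.
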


The following theorem provides an upper bound on the expected $S_q$-norm-based stationarity achieved by Algorithm \ref{alg:sdm-linear}. Its proof is deferred to Section \ref{sec:pf-single-lmo}.

\begin{theorem}\label{thm:F-con}
Suppose that Assumption \ref{asp:basic} holds. Let $K$ be the maximum iteration number for running Algorithm \ref{alg:sdm-linear} and let $L_{p,q}$ and $V$ be as given in Assumption \ref{asp:basic}. Let $\{X_k\}$ be the iterates generated by Algorithm \ref{alg:sdm-linear} with input parameters $\eta$ and $\theta$ given by
\begin{align}\label{def:eta-theta-F}
\eta=\sqrt{\frac{\Delta_f\theta}{L_{p,q}K}} \quad \text{and} \quad \theta = \min\bigg\{\frac{1}{\sigma_{q,B}}\sqrt{\frac{\Delta_fL_{p,q}}{K}}, 1\bigg\},
\end{align}
where $\Delta_f := f(X_0) - f_{\mathrm{low}}$ and $\sigma_{q,B} := \|V\|_{S_q}/\sqrt{B}$.
Then, it holds that  for all $K\ge1$,
\begin{align}\label{ineq:ave-stat-F}
\frac{1}{K}\sum_{k=0}^{K-1}\E[\|\nabla f(X_k)\|_{S_q}] & \le 6\sqrt{\sigma_{q,B}}\Big(\frac{\Delta_fL_{p,q}}{K}\Big)^{1/4} + 8\sqrt{\frac{\Delta_fL_{p,q}}{K}} + \frac{2\sigma_{q,B}^2}{\sqrt{\Delta_fL_{p,q}K}}.
\end{align}    
\end{theorem}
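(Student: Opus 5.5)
We sum the descent inequality of Lemma~\ref{lem:desc-ineq-Sp-ball} over $k=0,\ldots,K-1$, telescope, and control the noise term with Lemma~\ref{lem:F-con}. Rearranging \eqref{ineq:matrix-sign-descent} and using $f(X_K)\ge f_{\mathrm{low}}$ gives
\begin{align*}
\frac1K\sum_{k=0}^{K-1}\E[\|\nabla f(X_k)\|_{S_q}] \le \frac{\Delta_f}{\eta K} + \frac{2}{K}\sum_{k=0}^{K-1}\E[\|\Delta M_k\|_{S_q}] + \frac{L_{p,q}\eta}{2}.
\end{align*}
Inserting \eqref{ineq:expec-upbd-nuclear-norm} then yields
\begin{align*}
\frac1K\sum_{k=0}^{K-1}\E[\|\nabla f(X_k)\|_{S_q}] \le \frac{\Delta_f}{\eta K} + \frac{2\sigma_{q,B}}{\theta K} + 2\sqrt{\theta}\,\sigma_{q,B} + \frac{2L_{p,q}\eta}{\theta} + \frac{L_{p,q}\eta}{2}.
\end{align*}

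Next we substitute $\eta=\sqrt{\Delta_f\theta/(L_{p,q}K)}$. This gives $\Delta_f/(\eta K)=\sqrt{\Delta_fL_{p,q}/(K\theta)}$ and $2L_{p,q}\eta/\theta = 2\sqrt{\Delta_fL_{p,q}/(K\theta)}$. Since $\theta\le1$, we also have $L_{p,q}\eta/2\le \tfrac12\sqrt{\Delta_fL_{p,q}/(K\theta)}$. The bound therefore reduces to
\[
\tfrac72\sqrt{\Delta_fL_{p,q}/(K\theta)} + 2\sqrt{\theta}\,\sigma_{q,B} + 2\sigma_{q,B}/(\theta K).
\]
Write $a:=\sqrt{\Delta_fL_{p,q}/K}$, so that $\theta=\min\{a/\sigma_{q,B},1\}$, and split into two cases.

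\emph{Case $\theta=a/\sigma_{q,B}<1$.} Then $\sqrt{\Delta_fL_{p,q}/(K\theta)}=\sqrt{a\sigma_{q,B}}$ and $\sqrt\theta\,\sigma_{q,B}=\sqrt{a\sigma_{q,B}}$. Together these contribute $(7/2+2)\sqrt{a\sigma_{q,B}}\le 6\sqrt{\sigma_{q,B}}(\Delta_fL_{p,q}/K)^{1/4}$. The remaining term is $2\sigma_{q,B}/(\theta K)=2\sigma_{q,B}^2/(aK)=2\sigma_{q,B}^2/\sqrt{\Delta_fL_{p,q}K}$, which is exactly the third term of \eqref{ineq:ave-stat-F}.

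\emph{Case $\theta=1$, i.e.\ $\sigma_{q,B}\le a$.} The bound becomes $\tfrac72 a + 2\sigma_{q,B} + 2\sigma_{q,B}/K\le \tfrac72a+4a\le 8a$, which is the second term of \eqref{ineq:ave-stat-F}.

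The only delicate point is the constant bookkeeping in the case split, in particular checking that the $1/\theta$ factors are absorbed correctly and that $K\ge1$ suffices to control the $2\sigma_{q,B}/K$ term when $\theta=1$. Everything else is direct substitution into Lemmas~\ref{lem:desc-ineq-Sp-ball} and~\ref{lem:F-con}, so no real obstacle is expected.
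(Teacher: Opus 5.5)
Your proposal is correct and follows the paper's proof essentially step for step. You telescope Lemma~\ref{lem:desc-ineq-Sp-ball}, insert Lemma~\ref{lem:F-con}, substitute $\eta$, and split on whether $\theta=1$. The only difference is the constant bookkeeping: you bound $L_{p,q}\eta/2$ by $\tfrac12\sqrt{\Delta_fL_{p,q}/(K\theta)}$ to get the coefficient $7/2$, whereas the paper merges $L_{p,q}\eta/2+2L_{p,q}\eta/\theta\le 3L_{p,q}\eta/\theta$ to get $4$; both fit under the stated constants $6$ and $8$.
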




\subsection{An $S_p$-$S_q$ method with regularization oracle}\label{sec:qmo}

In this subsection, we propose an $S_p$-$S_q$ method with regularization oracle and establish its convergence guarantees. At each iteration $k$, this method first constructs a mini-batch stochastic gradient $\bar G_k$ based on a set of stochastic gradients $\{G(X_k;\xi_{k,i})\}$, and then performs a momentum update to obtain $M_k$ as a weighted average of past stochastic gradients. The algorithm then updates the next iterate $X_{k+1}$ by an $S_p$-norm-based regularization oracle. Details of this method are presented in Algorithm \ref{alg:sdm-quadratic}.



\begin{algorithm}[!htbp] 
\caption{An $S_p$-$S_q$ method with regularization oracle} 
\label{alg:sdm-quadratic} 
\begin{algorithmic}[0] 
\State \textbf{Input:} starting iterate $X_0\in\mathbb{R}^{m \times n}$, momentum parameter $\theta\in(0,1]$, Schatten norm parameter $p \in [2, \infty]$, step size $\eta>0$, batch size $B\ge1$. 
\For{$k=0,1,2,\ldots$} 
\State Sample a mini-batch $\{\xi_{k,i}\}_{i=1}^B$ and compute $\bar G_k=\frac{1}{B}\sum_{i=1}^B G(X_k;\xi_{k,i})$.
\State Compute the search direction: 
\begin{align}
\label{update-mk-2} 
M_k = \begin{cases}
(1 - \theta) M_{k-1} + \theta \bar G_k, &\text{if } k \geq 1\\
\bar G_0 & \text{if } k =0.
\end{cases}
\end{align}
\State Update the next iterate:
\begin{align}\label{update-xk-2}
X_{k+1}  = \argmin_{X\in\R^{m\times n}} \bigg\{ \langle M_k, X-X_k\rangle + \frac{1}{2\eta}\|X-X_k\|_{S_p}^2\bigg\}.
\end{align} 
\EndFor
\end{algorithmic} 
\end{algorithm}

The following lemma provides a descent inequality for the function value when updating with the $S_p$-norm-based regularization oracle of Algorithm \ref{alg:sdm-quadratic}. Its proof is deferred to Section \ref{sec:pf-single-qmo}.

\begin{lemma}\label{lem:ineq-descent-qmo-single}
Suppose that Assumption \ref{asp:basic} holds. Let $\{(X_k,M_k)\}$ be generated by Algorithm \ref{alg:sdm-quadratic} with step size $\eta\in(0,1/L_{p,q}]$, and let $\{\Delta M_k\}$ be defined in \eqref{def:Delta-not} and $L_{p,q}$ be given in Assumption \ref{asp:basic}. Then, for any $k \geq0$, it holds that
\begin{align}
    f(X_{k+1}) 
    &\le f(X_k)
    - \Big(\frac{1}{4\eta} - \frac{L_{p,q}}{4}\Big)
    \|X_{k+1} - X_k\|_{S_p}^2 \nonumber \\
    &\qquad 
    - \frac{\eta}{8}(1 - L_{p,q}\eta)
    \|\nabla f(X_k)\|_{S_q}^2 
    + \frac{\eta}{4}(3 - L_{p,q}\eta)
    \|\Delta M_k\|_{S_q}^2.\label{ineq:matrix-sign-descent-qo}
\end{align}
\end{lemma}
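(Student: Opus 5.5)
The plan is to combine the quadratic upper bound \eqref{ineq:desc-lpq} with the closed form of the regularization oracle from Lemma~\ref{lem:single-block-oracle}, and then to trade the inner product with the error $\Delta M_k$ against the squared step length via Young's inequality.

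\textbf{Step 1: properties of the step.} Write $D_k := X_{k+1}-X_k$. Substituting $Z=D/\eta$ in \eqref{update-xk-2} shows that $D_k=\eta\,\Delta_p^{\rm r}(M_k)$. From \eqref{def:Delta1M-Delta2M} and the SVD $M_k=U\Sigma W^T$, we have $\Delta_p^{\rm r}(M_k)=-U\Sigma^{q-1}W^T\|M_k\|_{S_q}^{2-q}$. A direct computation then gives
\begin{align*}
\|D_k\|_{S_p}=\eta\|M_k\|_{S_q},\qquad \langle M_k,D_k\rangle=-\eta\|M_k\|_{S_q}^2=-\tfrac1\eta\|D_k\|_{S_p}^2 .
\end{align*}
Here we use $(q-1)p=q$, so that $\|\Sigma^{q-1}\|_{S_p}=\|M_k\|_{S_q}^{q-1}$. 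The case $M_k=0$ is trivial.

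\textbf{Step 2: descent with a Young split.} By \eqref{ineq:desc-lpq} and $\nabla f(X_k)=M_k-\Delta M_k$,
\begin{align*}
f(X_{k+1})\le f(X_k)-\tfrac1\eta\|D_k\|_{S_p}^2+\|\Delta M_k\|_{S_q}\|D_k\|_{S_p}+\tfrac{L_{p,q}}2\|D_k\|_{S_p}^2 .
\end{align*}
This uses the Hölder/duality inequality \eqref{ineq:dual-Sp-norm} for $-\langle \Delta M_k,D_k\rangle$. Young's inequality gives $\|\Delta M_k\|_{S_q}\|D_k\|_{S_p}\le \frac1{2\eta}\|D_k\|_{S_p}^2+\frac\eta2\|\Delta M_k\|_{S_q}^2$. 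Hence
\begin{align*}
f(X_{k+1})\le f(X_k)-\Big(\tfrac1{2\eta}-\tfrac{L_{p,q}}2\Big)\|D_k\|_{S_p}^2+\tfrac\eta2\|\Delta M_k\|_{S_q}^2 .
\end{align*}

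\textbf{Step 3: convert half of the decrease into a gradient-norm term.} Split the coefficient $\frac1{2\eta}-\frac{L_{p,q}}2$ into two equal halves, which is where $\eta\le 1/L_{p,q}$ is needed so that both halves are nonnegative. Keep one half as the $\|D_k\|_{S_p}^2$ term in \eqref{ineq:matrix-sign-descent-qo}. In the other half use $\|D_k\|_{S_p}^2=\eta^2\|M_k\|_{S_q}^2$, which yields $-\frac{\eta}{4}(1-L_{p,q}\eta)\|M_k\|_{S_q}^2$. Then apply
\begin{align*}
\|M_k\|_{S_q}^2\ge \tfrac12\|\nabla f(X_k)\|_{S_q}^2-\|\Delta M_k\|_{S_q}^2,
\end{align*}
which follows from the triangle inequality and $(a+b)^2\le 2a^2+2b^2$. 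This produces $-\frac\eta8(1-L_{p,q}\eta)\|\nabla f(X_k)\|_{S_q}^2$ together with an extra $\frac\eta4(1-L_{p,q}\eta)\|\Delta M_k\|_{S_q}^2$. Adding this to the $\frac\eta2\|\Delta M_k\|_{S_q}^2$ from Step 2 gives exactly $\frac\eta4(3-L_{p,q}\eta)\|\Delta M_k\|_{S_q}^2$, as claimed.

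\textbf{Main obstacle.} The only nonroutine point is Step 1: the identities $\|D_k\|_{S_p}=\eta\|M_k\|_{S_q}$ and $\langle M_k,D_k\rangle=-\eta\|M_k\|_{S_q}^2$. These need care with the pseudo-inverse power when $M_k$ is rank-deficient and with the endpoint cases $p=\infty$ ($q=1$) and $p=2$. I would verify them directly on singular values, treating zero singular values separately. The rest is bookkeeping of constants.
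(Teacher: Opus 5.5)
Your proposal is correct and follows essentially the same route as the paper. You use the same ingredients in the same order: the oracle identities $\|X_{k+1}-X_k\|_{S_p}=\eta\|M_k\|_{S_q}$ and $\langle M_k,X_{k+1}-X_k\rangle=-\eta\|M_k\|_{S_q}^2$, the quadratic upper bound \eqref{ineq:desc-lpq} with a Young split of the error inner product, then halving the $\|X_{k+1}-X_k\|_{S_p}^2$ coefficient and turning one half into $-\frac{\eta}{8}(1-L_{p,q}\eta)\|\nabla f(X_k)\|_{S_q}^2$ via $-\|M_k\|_{S_q}^2\le-\frac12\|\nabla f(X_k)\|_{S_q}^2+\|\Delta M_k\|_{S_q}^2$.
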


The next lemma provides an upper bound on the squared estimation error $\{\Delta M_k\}$ for the momentum directions generated by Algorithm \ref{alg:sdm-quadratic}. Its proof is deferred to Section \ref{sec:pf-single-qmo}.

\begin{lemma}\label{lem:momeutm-bd-Sp-reg}
Suppose that Assumption \ref{asp:basic} holds. Let $\{(X_k,M_k)\}$ be generated by Algorithm \ref{alg:sdm-quadratic} with input parameters $\eta>0$ and $\theta\in(0,1]$, and let $\{(\Delta M_k,\Delta G_k)\}$ be defined in \eqref{def:Delta-not} and $L_{p,q}$ be given in Assumption \ref{asp:basic}. Then, 
\begin{align}\label{ineq:square-Sp-deter}
    \|\Delta M_k\|_{S_q}^2
    &\le 3(1-\theta)^{2k}\|\Delta M_0\|_{S_q}^2
    + 3\theta^2
    \Big\|\sum_{t=0}^{k-1}(1-\theta)^t\Delta G_{k-t}\Big\|_{S_q}^2 \nonumber\\
    &\qquad
    + \frac{3(1-\theta)^2 L_{p,q}^2}{\theta}
    \sum_{t=0}^{k-1}(1-\theta)^t
    \|X_{k-t-1} - X_{k-t}\|_{S_p}^2
\end{align}
    
\end{lemma}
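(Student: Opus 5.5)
We first unroll the momentum recursion exactly as for Lemma~\ref{lem:momeutm-bd-Sp-ball}, and then square the result. For $k\ge1$, subtracting $\nabla f(X_k)$ from \eqref{update-mk-2} gives
\begin{align*}
\Delta M_k = (1-\theta)\Delta M_{k-1} + \theta\Delta G_k + (1-\theta)\big(\nabla f(X_{k-1})-\nabla f(X_k)\big).
\end{align*}
Iterating this down to $\Delta M_0$ yields
\begin{align*}
\Delta M_k = (1-\theta)^k\Delta M_0 + \theta\sum_{t=0}^{k-1}(1-\theta)^t\Delta G_{k-t} + \sum_{t=0}^{k-1}(1-\theta)^{t+1}\big(\nabla f(X_{k-t-1})-\nabla f(X_{k-t})\big).
\end{align*}
The martingale term is kept intact inside a single norm, as in \eqref{ineq:square-Sp-deter}. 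Keeping it whole is essential, since it is later handled by Lemma~\ref{lem:concentration-Sp}.

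Next we apply the triangle inequality in $S_q$ together with $(a+b+c)^2\le 3(a^2+b^2+c^2)$. This produces the first two terms of \eqref{ineq:square-Sp-deter} directly, and it leaves
\begin{align*}
3\Big(\sum_{t=0}^{k-1}(1-\theta)^{t+1}\|\nabla f(X_{k-t-1})-\nabla f(X_{k-t})\|_{S_q}\Big)^2 .
\end{align*}
By Assumption~\ref{asp:basic}(b), each gradient difference is at most $L_{p,q}\|X_{k-t-1}-X_{k-t}\|_{S_p}$ in $S_q$-norm.

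The only slightly delicate step is the third term, because the squared sum must be converted into a weighted sum of squares that carries a factor $1/\theta$. To do this we write $(1-\theta)^{t+1}=(1-\theta)\cdot(1-\theta)^{t/2}\cdot(1-\theta)^{t/2}$ and apply Cauchy--Schwarz:
\begin{align*}
\Big(\sum_{t=0}^{k-1}(1-\theta)^{t}a_t\Big)^2\le \Big(\sum_{t=0}^{k-1}(1-\theta)^t\Big)\Big(\sum_{t=0}^{k-1}(1-\theta)^t a_t^2\Big)\le \frac{1}{\theta}\sum_{t=0}^{k-1}(1-\theta)^t a_t^2,
\end{align*}
with $a_t=L_{p,q}\|X_{k-t-1}-X_{k-t}\|_{S_p}$. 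Restoring the prefactor $(1-\theta)^2$ gives exactly the third term $\frac{3(1-\theta)^2L_{p,q}^2}{\theta}\sum_{t=0}^{k-1}(1-\theta)^t\|X_{k-t-1}-X_{k-t}\|_{S_p}^2$. The geometric sum is bounded by $1/\theta$ because $\theta\in(0,1]$.

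Finally, for $k=0$ all sums are empty and the claimed bound reduces to $\|\Delta M_0\|_{S_q}^2\le 3\|\Delta M_0\|_{S_q}^2$, which holds trivially.
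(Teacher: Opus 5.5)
Your proposal is correct and follows the paper's proof essentially step for step. The paper likewise unrolls the momentum recursion, applies the triangle inequality, $(a+b+c)^2\le 3(a^2+b^2+c^2)$, and Assumption~\ref{asp:basic}(b), and then bounds the weighted squared sum by $\frac{1}{\theta}\sum_{t=0}^{k-1}(1-\theta)^t\|X_{k-t-1}-X_{k-t}\|_{S_p}^2$. The only cosmetic difference is that the paper phrases this last step via convexity of $(\cdot)^2$ rather than Cauchy--Schwarz, which here is the same inequality.
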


The next lemma gives an upper bound on the sum of the expected squared estimation error over the momentum directions generated by Algorithm \ref{alg:sdm-quadratic}. Its proof is deferred to Section \ref{sec:pf-single-qmo}.

\begin{lemma}\label{lem:F-con-reg}
Suppose that Assumption \ref{asp:basic} holds. Let $\{(X_k,M_k)\}$ be generated by Algorithm \ref{alg:sdm-quadratic} with input parameters $\eta>0$, $\theta\in(0,1]$, and $B\ge1$. Let $\{\Delta M_k\}$ be defined in \eqref{def:Delta-not}, and let $L_{p,q}$ and $V$ be given in Assumption \ref{asp:basic}. Then,   
\begin{align}\label{ineq:expec-upbd-spec-norm}
\sum_{k=0}^{K-1} \E[\|\Delta M_k\|_{S_q}^2]
&\le \frac{3}{\theta}\sigma_{q,B}^2
+ 3\theta K \sigma_{q,B}^2
+ \frac{3 L_{p,q}^2}{\theta^2}
\sum_{k=1}^{K-1}
\E[\|X_k - X_{k-1}\|_{S_p}^2]
\qquad\forall K\ge1,    
\end{align}
where $\sigma_{q,B}:=\|V\|_{S_q}/\sqrt{B}$.
\end{lemma}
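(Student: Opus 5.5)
Sum the deterministic bound of Lemma~\ref{lem:momeutm-bd-Sp-reg} over $k=0,\ldots,K-1$, take expectations, and control the three resulting terms separately. The first two will use Lemma~\ref{lem:concentration-Sp} with the choice $\Lambda=(VV^T)^{(2-q)/q}$ (pseudo-inverse and a limiting argument if $VV^T$ is singular). The third term needs only a rearrangement of a double sum.

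\textbf{Initialization term.} Since $M_0=\bar G_0$, we have $\Delta M_0=\Delta G_0=\frac1B\sum_i(G(X_0;\xi_{0,i})-\nabla f(X_0))$, a sum of $B$ martingale differences. With $\Lambda=(VV^T)^{(2-q)/q}$, Lemma~\ref{lem:concentration-Sp} gives
\[
\E\|\Delta M_0\|_{S_q}^2\le \|\Lambda\|_{S_{q/(2-q)}}\,\mathrm{Tr}(VV^T\Lambda^{-1})/B .
\]
The two factors are $\|\Lambda\|_{S_{q/(2-q)}}=\|V\|_{S_q}^{2-q}$ and $\mathrm{Tr}(VV^T\Lambda^{-1})=\mathrm{Tr}((VV^T)^{q/2})=\|V\|_{S_q}^q$. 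Hence $\E\|\Delta M_0\|_{S_q}^2\le\sigma_{q,B}^2$. Summing $3(1-\theta)^{2k}\sigma_{q,B}^2$ over $k$ and using $\sum_k(1-\theta)^{2k}\le 1/\theta$ produces the term $\frac{3}{\theta}\sigma_{q,B}^2$.

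\textbf{Noise term.} The matrices $A_t=(1-\theta)^t\Delta G_{k-t}$, taken in time order, form a martingale difference sequence with respect to $\mathcal F$. This filtration is not their natural one, so I will either check that the proof of Lemma~\ref{lem:concentration-Sp} only uses the tower property, or pass to the coarser natural filtration, which preserves the martingale difference property. Conditioning gives $\E[A_tA_t^T]\preceq(1-\theta)^{2t}VV^T/B$, so the same $\Lambda$ yields
\[
\E\Big\|\sum_{t=0}^{k-1}(1-\theta)^t\Delta G_{k-t}\Big\|_{S_q}^2\le \sigma_{q,B}^2\sum_t(1-\theta)^{2t}\le \sigma_{q,B}^2/\theta .
\]
Multiplying by $3\theta^2$ and summing over $K$ indices gives $3\theta K\sigma_{q,B}^2$.

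\textbf{Drift term.} Sum $\frac{3(1-\theta)^2L_{p,q}^2}{\theta}\sum_{t=0}^{k-1}(1-\theta)^t\|X_{k-t}-X_{k-t-1}\|_{S_p}^2$ over $k$ and reindex by $s=k-t\in\{1,\ldots,K-1\}$. Each $\|X_s-X_{s-1}\|_{S_p}^2$ then carries a weight of at most $\sum_{t\ge0}(1-\theta)^t\le 1/\theta$. Dropping $(1-\theta)^2\le1$, this gives $\frac{3L_{p,q}^2}{\theta^2}\sum_{s=1}^{K-1}\E\|X_s-X_{s-1}\|_{S_p}^2$.

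The main obstacle I expect is the noise term: applying Lemma~\ref{lem:concentration-Sp} correctly with this $\Lambda$, and verifying the identities for $\|\Lambda\|_{S_{q/(2-q)}}$ and the trace, which make the bound dimension-free. The case $q=2$ ($\Lambda=I$) and singular $V$ need a small separate remark.
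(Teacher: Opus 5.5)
Your proposal follows the paper's proof step for step: take expectations in Lemma~\ref{lem:momeutm-bd-Sp-reg}, bound $\E\|\Delta M_0\|_{S_q}^2$ and the discounted noise sum via Lemma~\ref{lem:concentration-Sp}, and reindex the drift double sum. Your geometric-sum bounds, the filtration remark and the reindexing $s=k-t$ are all fine. The one step that fails is your choice of $\Lambda$. Let $s_i$ be the singular values of $V$. With $\Lambda=(VV^T)^{(2-q)/q}$, the eigenvalues of $\Lambda$ are $s_i^{2(2-q)/q}$, so
\[
\|\Lambda\|_{S_{\frac{q}{2-q}}}=\Big(\sum_i s_i^2\Big)^{\frac{2-q}{q}}=\|V\|_F^{\frac{2(2-q)}{q}},
\qquad
\mathrm{Tr}(VV^T\Lambda^{+})=\sum_{s_i>0}s_i^{\frac{4(q-1)}{q}} .
\]
For $q<2$, these are not $\|V\|_{S_q}^{2-q}$ and $\|V\|_{S_q}^{q}$ as you claim. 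For example, at $q=1$ your $\Lambda$ is $VV^T$, and the product of the two factors is $\|V\|_F^2\,\mathrm{rank}(V)$. For $V=\mathrm{diag}(1,\delta,\ldots,\delta)\in\R^{m\times m}$ with $\delta\downarrow0$, this is about $m$ times $\|V\|_{S_1}^2$. So as written you do not get $\sigma_{q,B}^2$ in either the initialization term or the noise term. You get a rank-dependent constant, which loses exactly the dimension-freeness the lemma is about.

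The fix is the exponent used in the paper, $1-q/2$ instead of $(2-q)/q$. Take $\Lambda_\varepsilon=(VV^T+\varepsilon I)^{1-q/2}\succ0$. Then
\[
\|\Lambda_\varepsilon\|_{S_{\frac{q}{2-q}}}=\Big(\sum_i(s_i^2+\varepsilon)^{q/2}\Big)^{\frac{2-q}{q}},
\qquad
\mathrm{Tr}(VV^T\Lambda_\varepsilon^{-1})\le\mathrm{Tr}\big((VV^T+\varepsilon I)^{q/2}\big)=\sum_i(s_i^2+\varepsilon)^{q/2}.
\]
Their product tends to $\|V\|_{S_q}^2$ as $\varepsilon\downarrow0$. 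This exponent is the one that makes the H\"older step in the proof of Lemma~\ref{lem:nuc-ppt} tight, since it requires $\lambda_i\propto d_i^{(2-q)/2}$. The regularization by $\varepsilon$ also removes the need for pseudo-inverses, because Lemma~\ref{lem:concentration-Sp} requires $\Lambda\succ0$. With this change, the rest of your argument goes through and coincides with the paper's proof.
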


The next theorem provides an upper bound on the expected squared $S_q$-norm-based stationarity achieved by Algorithm \ref{alg:sdm-quadratic}. Its proof is deferred to Section \ref{sec:pf-single-qmo}.

\begin{theorem}\label{thm:spec-con}
Suppose that Assumption \ref{asp:basic} holds. Let $K$ be the maximum iteration number for running Algorithm \ref{alg:sdm-quadratic} and let $L_{p,q}$ and $V$ be given in Assumption \ref{asp:basic}. Let $\{X_k\}$ be the iterates generated by Algorithm \ref{alg:sdm-quadratic} with input parameters $\eta$ and $\theta$ given by
\begin{align}\label{def:eta-theta-spec}
\eta=\frac{\theta}{4L_{p,q}}\quad \text{and}\quad
\theta = \min\left\{
\sqrt{\frac{L_{p,q}\Delta_f+\sigma_{q,B}^2}
{\sigma_{q,B}^2K}},1
\right\},
\end{align}
where $\Delta_f:=f(X_0)-f_{\mathrm{low}}$ and $\sigma_{q,B}:=\|V\|_{S_q}/\sqrt{B}$.
Then, 
\begin{align}\label{ineq:ave-stat-spec}
\frac{1}{K}\sum_{k=0}^{K-1}
\E[\|\nabla f(X_k)\|_{S_q}^2]
& \le 67\left[
\sqrt{\frac{\sigma_{q,B}^2
(L_{p,q}\Delta_f+\sigma_{q,B}^2)}{K}}
+ \frac{L_{p,q}\Delta_f+\sigma_{q,B}^2}{K}
\right]\qquad\forall K\ge1.
\end{align}    
\end{theorem}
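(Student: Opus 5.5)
The plan is to combine Lemma~\ref{lem:ineq-descent-qmo-single} with Lemma~\ref{lem:F-con-reg}, telescope, and then plug in the parameters \eqref{def:eta-theta-spec}.

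\textbf{Step 1: telescoping.} Since $\theta\le 1$, we have $\eta=\theta/(4L_{p,q})\le 1/(4L_{p,q})$, so $L_{p,q}\eta\le 1/4$. The coefficients in \eqref{ineq:matrix-sign-descent-qo} therefore satisfy
\[
\tfrac{1}{4\eta}-\tfrac{L_{p,q}}{4}\ge \tfrac{3}{16\eta},\qquad \tfrac{\eta}{8}(1-L_{p,q}\eta)\ge \tfrac{3\eta}{32},\qquad \tfrac{\eta}{4}(3-L_{p,q}\eta)\le \tfrac{3\eta}{4}.
\]
Sum \eqref{ineq:matrix-sign-descent-qo} over $k=0,\dots,K-1$, take expectations, and use $f(X_K)\ge f_{\rm low}$. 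This gives
\[
\tfrac{3\eta}{32}\sum_{k}\E\|\nabla f(X_k)\|_{S_q}^2+\tfrac{3}{16\eta}\sum_{k}\E\|X_{k+1}-X_k\|_{S_p}^2\le \Delta_f+\tfrac{3\eta}{4}\sum_k\E\|\Delta M_k\|_{S_q}^2 .
\]

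\textbf{Step 2: absorbing the movement term.} Substitute \eqref{ineq:expec-upbd-spec-norm} for the last sum. The movement term it produces has coefficient $\tfrac{3\eta}{4}\cdot\tfrac{3L_{p,q}^2}{\theta^2}=\tfrac{9\eta L_{p,q}^2}{4\theta^2}$. With $\eta=\theta/(4L_{p,q})$ we have $L_{p,q}^2/\theta^2=1/(16\eta^2)$, so this coefficient equals $\tfrac{9}{64\eta}<\tfrac{3}{16\eta}=\tfrac{12}{64\eta}$. The movement terms are therefore absorbed, and the index ranges match since $\sum_{k=1}^{K-1}\|X_k-X_{k-1}\|^2\le\sum_{k=0}^{K-1}\|X_{k+1}-X_k\|^2$. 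This step is the one I expect to be delicate: it is exactly where the coupling $\eta\propto\theta/L_{p,q}$ is needed, and the constant $4$ in the step size has to beat the $3\cdot 3/4$ arising from the two lemmas. The check above confirms that it does.

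\textbf{Step 3: dividing out.} Dividing by $3\eta K/32$ and using $1/\eta=4L_{p,q}/\theta$ yields
\[
\frac1K\sum_k\E\|\nabla f(X_k)\|^2\le \frac{32\cdot4L_{p,q}\Delta_f}{3\theta K}+8\Big(\frac{3\sigma^2}{\theta K}+3\theta\sigma^2\Big)\lesssim \frac{L_{p,q}\Delta_f+\sigma_{q,B}^2}{\theta K}+\theta\sigma_{q,B}^2 ,
\]
with explicit constants of roughly $43$ and $24$.

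\textbf{Step 4: the two regimes of $\theta$.} If $\theta=\sqrt{(L_{p,q}\Delta_f+\sigma^2)/(\sigma^2K)}<1$, both terms equal $\sqrt{\sigma^2(L_{p,q}\Delta_f+\sigma^2)/K}$ up to constants. If $\theta=1$, then $\sigma^2\le (L_{p,q}\Delta_f+\sigma^2)/K$, so the $\theta\sigma^2$ term is bounded by $(L_{p,q}\Delta_f+\sigma^2)/K$, and the first term is already of that form. Tracking the constants in both cases gives a bound of the form $C\big[\sqrt{\cdot}+\cdot/K\big]$ with $C$ of the order of the stated $67$.
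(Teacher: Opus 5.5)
Your proposal is correct and follows essentially the same route as the paper: telescope the descent inequality of Lemma~\ref{lem:ineq-descent-qmo-single}, substitute Lemma~\ref{lem:F-con-reg}, use $\eta=\theta/(4L_{p,q})$ so that the movement terms are absorbed, and then split into the cases $\theta<1$ and $\theta=1$. Your constants are exact rather than approximate: dividing by $3\eta K/32$ gives $\frac{128L_{p,q}\Delta_f}{3\theta K}+\frac{24\sigma_{q,B}^2}{\theta K}+24\theta\sigma_{q,B}^2$, and since $128/3\le 43$ and $43+24=67$, you obtain precisely the stated constant $67$ in both regimes.
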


\section{Multi-block coupling via weighted norms}\label{sec:multi}

In this section, we consider the multi-block matrix optimization problem
\begin{align}\label{pb:matrix-opt-multi}
\min_{\mathbf{X}\in\MBspace} f(\mathbf{X}),    
\end{align}
which is a generalization of the single-block problem \eqref{pb:matrix-opt} that more closely aligns with the structure of neural networks. Without loss of generality, we assume $m_j\le n_j$ for each $j$. For each block matrix space $\R^{m_j\times n_j}$, adopt the $S_{p_j}$- and $S_{q_j}$-norms as metrics, with the intention of extending the $S_{p_j}$-$S_{q_j}$ methods developed in Section \ref{sec:single} for optimizing the component $[X]_j$. To connect the $J$ blocks, we couple the $S_{p_j}$- and $S_{q_j}$-norms, $j\in[J]$, with weighted $\ell_\alpha$- and $\ell_\beta$-norms, and formulate a mixed norm
\begin{align}\label{def:mixed-norm}
\|{\bf X}\|_{{\bf w},{\bf p}} := \Big[\sum_{j=1}^J (L_j^{1/2}\|[X]_j\|_{S_{p_j}})^\alpha\Big]^{\frac{1}{\alpha}},\quad \|{\bf X}\|_{{\bf v}, {\bf q}} := \Big[\sum_{j=1}^J (L_j^{-1/2}\|[X]_j\|_{S_{q_j}})^\beta\Big]^{\frac{1}{\beta}},
\end{align}
where
\begin{align*}
&{\bf p}:=(p_1,\ldots,p_J),\quad {\bf q}:=(q_1,\ldots,q_J),\quad L_j:=L_{p_j,q_j},\\
&{\bf w}:=(\alpha,L_1,\ldots,L_J),\quad {\bf v}:=(\beta,L_1^{-1},\ldots,L_J^{-1}).
\end{align*}
The weights $\{L_{p_j,q_j}\}_{j=1}^J$ allow heterogeneous Lipschitz moduli associated with the $J$ block-wise subspaces to be incorporated into the mixed norms, while the $\ell_\alpha$- and $\ell_\beta$-norms naturally impose different coupling rules. The mixed norms will be very useful in designing algorithmic strategies for solving \eqref{pb:matrix-opt-multi}. Special cases of the coupling rules, obtained by specifying $(\alpha,\beta)$, will be further illustrated in Sections \ref{sec:multi-ball} and \ref{sec:multi-reg}, respectively.

We make the following assumption on problem \eqref{pb:matrix-opt-multi} throughout this paper.

\begin{assumption}\label{asp:multi-basic}
\begin{enumerate}[leftmargin = .7cm]
    \item[{(a)}] There exists a finite $f_{\mathrm{low}}$ such that $f(\mathbf X)\ge f_{\mathrm{low}}$ for all $\mathbf X\in\MBspace$.
    \item[{(b)}] The parameters $\alpha\in[2,\infty]$ and $\beta\in[1,2]$ are a conjugate pair, and $p_j\in[2,\infty]$ and $q_j\in[1,2]$ are also a conjugate pair for every $j\in[J]$.
    \item[{(c)}] The gradient $\nabla f$ satisfies the Lipschitz condition:
    \[
    \|\nabla f(\mathbf Y)-\nabla f(\mathbf X)\|_{{\bf v},{\bf q}}
    \le
    \|\mathbf Y-\mathbf X\|_{{\bf w},{\bf p}}
    \qquad
    \forall \mathbf X,\mathbf Y\in\MBspace.
    \]
    \item[{(d)}] There exist some $V_j\in\R^{m_j\times m_j}$, $j\in[J]$ such that the stochastic gradient estimator $\mathbf G:\MBspace\times\Xi\to\MBspace$ satisfies the following covariance bound for all $\mathbf{X}\in\MBspace$:
    \[
    \E[\mathbf G(\mathbf X;\xi)]=\nabla f(\mathbf X),
    \quad
    \E\!\left[
    ([\mathbf G(\mathbf X;\xi)]_j-[\nabla f(\mathbf X)]_j)
    ([\mathbf G(\mathbf X;\xi)]_j-[\nabla f(\mathbf X)]_j)^T
    \right]\preceq V_j V_j^T.
    \]
\end{enumerate}
\end{assumption}

\begin{remark}(i) Assumption \ref{asp:multi-basic}(c) states a Lipschitz-type condition on $\nabla f$, naturally induced by the dual norm pair $\|\cdot\|_{{\bf w},{\bf p}}$ and $\|\cdot\|_{{\bf v},{\bf q}}$. The Lipschitz constants of $\nabla f$ for the $J$ matrix blocks are encoded in the norms as weights in the $\ell_\alpha$- and $\ell_\beta$-norms; see \eqref{def:mixed-norm}. As a consequence of Assumption \ref{asp:multi-basic}(c), the quadratic upper bound holds:
    \begin{align}\label{ineq:desc-multi}
    f(\mathbf Y)\le f(\mathbf X) + \langle\nabla f(\mathbf X),\mathbf Y-\mathbf X\rangle + \frac{1}{2}\|\mathbf Y-\mathbf X\|_{{\bf w},{\bf p}}^2\qquad \forall \mathbf X,\mathbf Y\in\MBspace.
    \end{align}

    \noindent(ii) Assumption \ref{asp:multi-basic}(d) is a direct extension of Assumption \ref{asp:basic}(c) to the multi-block setting.
\end{remark}

Our multi-block methods use a momentum scheme for constructing directions, which generates a sequence:
\begin{align*}
{\bf M}_k = (1-\theta) {\bf M}_{k-1} + \frac{\theta}{B}\sum_{i=1}^B {\bf G}({\bf X}_k;\xi_{k,i})\qquad\forall k\ge1,
\end{align*}
where $\{\xi_{k,i}\}_{i=1}^B$ is the set of independent samples drawn at iteration $k$. For convenience in the later analysis, we adopt the following shorthand
\begin{align}
&\bar{\mathbf G}_k
:= \frac{1}{B}\sum_{i=1}^B \mathbf G(\mathbf X_k;\xi_{k,i}),
\quad
\Delta \mathbf M_k
:= \mathbf M_k-\nabla f(\mathbf X_k),
\quad
\Delta \mathbf G_k
:= \bar{\mathbf G}_k-\nabla f(\mathbf X_k).
\label{def:Delta-not-multi}
\end{align}
Let $\{\mathcal F_k\}_{k\ge-1}$ be defined by $\mathcal F_{-1}:=\sigma(\mathbf X_0)$ and
$\mathcal F_k:=\sigma(\mathbf X_0,\{\xi_{s,i}:0\le s\le k,\ 1\le i\le B\})$
for $k\ge0$. Then, as a result of Assumption \ref{asp:multi-basic}(d), we have
\[
\E[\Delta \mathbf G_k\mid\mathcal F_{k-1}]=0,\quad \E[[\Delta \mathbf G_k]_j[\Delta \mathbf G_k]_j^T\mid\mathcal F_{k-1}]
\preceq V_j V_j^T/B
\qquad \forall k \geq 0,\; j\in[J].
\]

\subsection{Martingale moment inequality}\label{sec:multi-mii}

In this subsection, we establish a martingale moment inequality for random sequences in $\R^{{\bf m}\odot{\bf n}}$, which will be crucial for controlling the noise term in our subsequent algorithmic analysis. The proof can be found in Section \ref{sec:pf-multi-mii}.

\begin{lemma}\label{lem:multi-block-martingale}
Consider a sequence of random multi-block matrices $\{\mathbf A_t\}\subseteq$\MBspace.
Write
\[
\mathbf A_t=([A_t]_1,\ldots,[A_t]_J),
\qquad
[A_t]_j\in\R^{m_j\times n_j}.
\]
Let
$\{\mathcal F_t\}$ denote its natural filtration, i.e.,
$\mathcal F_t:=\sigma(\mathbf A_0,\ldots,\mathbf A_t)$.
Assume $\{\mathbf A_t\}$ is a block-wise martingale difference sequence:
$\E[[A_0]_j]=0$ and, for each $t\ge1$,
$\E[[A_t]_j\mid\mathcal F_{t-1}]=0$ for all $j\in[J]$.
In addition, assume that $\E[[A_t]_j[A_t]_j^T]$ is well-defined
for any $t$ and $j$. Then, for arbitrary $k\geq 1$ and symmetric matrices
$\Lambda_j\succ0$, $q_j\in[1,2]$, define
\[
B_j^2
:=
L_j^{-1}
\|\Lambda_j\|_{S_{\frac{q_j}{2-q_j}}}
\sum_{t=0}^{k-1}
\operatorname{Tr}\!\left(
\E[[A_t]_j[A_t]_j^T]\Lambda_j^{-1}
\right),
\]
where $S_{\frac{q_j}{2-q_j}}$ is interpreted as $S_\infty$ when
$q_j=2$. Then, for any $\beta\in[1,2]$, it holds that 
\[
\E\Big[
\Big\|
\sum_{t=0}^{k-1}\mathbf A_t
\Big\|_{{\bf v},{\bf q}}^2
\Big]
\le
\|(B_1,\ldots,B_J)\|_\beta^2.
\]
\end{lemma}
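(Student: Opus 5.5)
We reduce to the single-block bound of Lemma~\ref{lem:concentration-Sp} applied blockwise, combined with a concavity argument for the outer $\ell_\beta$ coupling. Write $S:=\sum_{t=0}^{k-1}\mathbf A_t$ and $x_j:=L_j^{-1}\|[S]_j\|_{S_{q_j}}^2\ge0$. By the definition \eqref{def:mixed-norm},
\[
\|S\|_{{\bf v},{\bf q}}^2=\Big[\sum_{j=1}^J (L_j^{-1/2}\|[S]_j\|_{S_{q_j}})^\beta\Big]^{2/\beta}=\Big[\sum_{j=1}^J x_j^{\beta/2}\Big]^{2/\beta}=\|(x_1,\ldots,x_J)\|_{\beta/2},
\]
where $\|\cdot\|_{\beta/2}$ is the $\ell_{\beta/2}$ quasi-norm. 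Since $\beta\in[1,2]$, we have $\beta/2\in[1/2,1]$.

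\textbf{Concavity.} The map $\phi(x)=(\sum_j x_j^{r})^{1/r}$ with $r\in(0,1]$ is concave on the nonnegative orthant: it is positively homogeneous of degree one, and its upper level sets are convex because $x\mapsto\sum_j x_j^r$ is concave. This is the reverse Minkowski inequality. It is also monotone nondecreasing in each coordinate. Jensen's inequality therefore gives
\[
\E\|S\|_{{\bf v},{\bf q}}^2=\E\,\phi(x)\le\phi(\E x_1,\ldots,\E x_J).
\]

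\textbf{Blockwise bound.} For each fixed $j$, the sequence $\{[A_t]_j\}$ is a martingale difference sequence with respect to the filtration $\mathcal F_t$. This filtration is larger than the natural filtration of the $j$th block alone, but conditioning on the coarser $\sigma$-field via the tower property preserves the zero conditional mean. If needed, I will re-check that the proof of Lemma~\ref{lem:concentration-Sp} only uses $\E[A_t\mid\mathcal F_{t-1}]=0$ for some filtration to which the partial sums are adapted. This is the only delicate point, and it should be routine: the cross terms vanish under any such filtration. Lemma~\ref{lem:concentration-Sp} with $\Lambda=\Lambda_j$ and $q=q_j$ then yields
\[
\E x_j\le L_j^{-1}\|\Lambda_j\|_{S_{\frac{q_j}{2-q_j}}}\sum_{t=0}^{k-1}\mathrm{Tr}\big(\E[[A_t]_j[A_t]_j^T]\Lambda_j^{-1}\big)=B_j^2 .
\]

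\textbf{Conclusion.} Monotonicity of $\phi$ gives
\[
\phi(\E x)\le\phi(B_1^2,\ldots,B_J^2)=\Big(\sum_j B_j^{\beta}\Big)^{2/\beta}=\|(B_1,\ldots,B_J)\|_\beta^2,
\]
which is the claim. The main obstacle I expect is justifying the Jensen step rigorously. This means verifying concavity of the $\ell_{r}$ quasi-norm for $r\le1$ on $\R_+^J$. I will do this via the homogeneity-plus-convex-superlevel-set argument above, or alternatively via the reverse Minkowski inequality $\phi(x+y)\ge\phi(x)+\phi(y)$ combined with homogeneity. I also need to handle the degenerate case $\beta=1$ (where $r=1/2$); this requires no special treatment beyond noting that all quantities are finite by the well-definedness assumption.
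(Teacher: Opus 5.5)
Your proof is correct and is essentially the paper's argument. The paper first bounds each block pointwise by $Y_j^2:=L_j^{-1}\|\Lambda_j\|_{S_{\frac{q_j}{2-q_j}}}\mathrm{Tr}([S]_j[S]_j^T\Lambda_j^{-1})$ via Lemma~\ref{lem:nuc-ppt}, uses $\E[Y_j^2]=B_j^2$, and interchanges expectation with the outer coupling by Minkowski's inequality in $L_{2/\beta}$ applied to $Y_j^\beta$. With $U_j=x_j^{\beta/2}$, that Minkowski step is literally your inequality $\E\,\phi(x)\le\phi(\E x)$, so citing it also settles your concern about justifying Jensen at boundary points of the orthant. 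The only difference is ordering: you take expectations first, invoke Lemma~\ref{lem:concentration-Sp} per block (your tower-property remark correctly handles the filtration), and close by monotonicity of $\phi$.
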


\subsection{Multi-block coupling with ball oracle}\label{sec:multi-ball}

In this subsection, we propose and analyze a multi-block method with ball oracle. This method is a multi-block extension of the $S_p$-$S_q$ method with ball oracle (Algorithm \ref{alg:sdm-linear}) for solving problem \eqref{pb:matrix-opt-multi}, where the $S_p$-norm used to construct the ball oracle is replaced by the mixed norm $\|\cdot\|_{{\bf w},{\bf p}}$. Details of this method are presented in Algorithm \ref{alg:multi-sdm-linear}.


We now make a few remarks on the solution of the ball-constrained subproblem \eqref{eq:update-xk-multi-1}. In fact, the solution can always be obtained by rescaling the solutions of $J$ separate $S_{p_j}$-norm ball oracle problems, one for each block. The weighted $\ell_\alpha$-norm structure in $\|\cdot\|_{{\bf w},{\bf p}}$ only affects the step size for each block. For illustration, we consider the cases of the weighted $\ell_\alpha$-norm separately for $\alpha=\infty$ and $\alpha\in[2,\infty)$:
\begin{itemize}[leftmargin = .6cm]
    \item $\ell_\infty$-norm: In this case, \eqref{eq:update-xk-multi-1} employs the mixed norm $\|\cdot\|_{{\bf w},{\bf p}}$ with ${\bf w}=(\infty,L_1,\ldots,L_J)$. By the definition of $\|\cdot\|_{{\bf w},{\bf p}}$, \eqref{eq:update-xk-multi-1} is immediately separable. Consequently, we obtain
    \begin{align*}
    [X_{k+1}]_j = \argmin_{\|[X]_j- [X_k]_j\|_{S_{p_j}}\le L_j^{-1/2}\eta}
    \langle [M_k]_j, [X]_j- [X_k]_j\rangle\qquad  \forall j\in[J].
    \end{align*}
    Therefore, the $\ell_\infty$-norm coupling simply yields a step size of $L_j^{-1/2}\eta$ for the $j$th block, where $L_j$ is the block-wise Lipschitz constant associated with the $j$th block.
    
    \item $\ell_\alpha$-norm ($\alpha\in[2,\infty)$): In this case, although the resulting subproblem appears non-separable, Lemma \ref{lem:mult-block-subproblem} shows that its solution can nevertheless be obtained by solving $J$ block-wise subproblems. Specifically, solving \eqref{eq:update-xk-multi-1} is equivalent to solving the following $J$ subproblems:
    \begin{align*}
        [X_{k+1}]_j = \argmin_{\|[X]_j- [X_k]_j\|_{S_{p_j}}\le L_j^{-1/2}\omega_j\eta}
    \langle [M_k]_j, [X]_j- [X_k]_j\rangle\qquad  \forall j\in[J],
    \end{align*}
    where $\omega_j$'s are rescaling factors given by
    \begin{align*}
        \omega_j := \left( \frac{L_j^{-1/2}\|[M]_j\|_{S_{q_j}}}
{\|{\bf M}\|_{{\bf v},{\bf q}}}\right)^{\beta-1}\qquad \forall j\in[J],
\end{align*}
    and the vector $(\omega_1,\ldots,\omega_J)$ satisfies $\|(\omega_1,\ldots,\omega_J)\|_\alpha=1$. Therefore, $\ell_\alpha$-norm-based coupling yields a step size of $L_j^{-1/2}\omega_j\eta$, where the rescaling factors $\{\omega_j\}_{j=1}^J$ form a unit vector in terms of $\ell_\alpha$-norm, and $\omega_j$ is larger when $\|[M]_j\|_{S_{q_j}}$ is larger.
\end{itemize}   



\begin{algorithm}[!htbp] 
\caption{A multi-block method with ball oracle} 
\label{alg:multi-sdm-linear} 
\begin{algorithmic}[0] 
\State \textbf{Input:} starting iterate $\mathbf X_0\in\MBspace$, momentum parameter $\theta\in(0,1]$, norm parameters $\alpha\in[2,\infty]$ and $\mathbf p\in[2,\infty]^J$, step size $\eta>0$, batch size $B\ge1$. 
\For{$k=0,1,2,\ldots$} 
\State Sample a mini-batch $\{\xi_{k,i}\}_{i=1}^B$ and compute $\bar{\mathbf G}_k=\frac{1}{B}\sum_{i=1}^B \mathbf G(\mathbf X_k;\xi_{k,i})$.
\State Compute the search direction: 
\begin{align}\label{update-mk-multi-1} 
\mathbf M_k = \begin{cases}
(1 - \theta) \mathbf M_{k-1} + \theta \bar{\mathbf G}_k, &\text{if } k \geq 1,\\
\bar{\mathbf G}_0, & \text{if } k =0.
\end{cases}
\end{align}
\State Update the next iterate:
\begin{align}\label{eq:update-xk-multi-1}
\mathbf X_{k+1}
= \argmin_{\|\mathbf X-\mathbf X_k\|_{{\bf w},{\bf p}}\le\eta}
\langle \mathbf M_k, \mathbf X-\mathbf X_k\rangle.
\end{align} 
\EndFor
\end{algorithmic} 
\end{algorithm}

The following lemma gives a descent inequality for the function value resulting from updates produced by the mixed-norm-based ball oracle of Algorithm \ref{alg:multi-sdm-linear}. Its proof is deferred to Section \ref{sec:pf-multi-ball}.

\begin{lemma}\label{lem:multi-ball-descent}
Suppose that Assumption \ref{asp:multi-basic} holds. Let
$\{(\mathbf X_k,\mathbf M_k)\}$ be generated by Algorithm
\ref{alg:multi-sdm-linear} with step size $\eta>0$, and let
$\{\Delta \mathbf M_k\}$ be defined in \eqref{def:Delta-not-multi}. Then,
\begin{align}\label{ineq:matrix-sign-descent-multi}
f(\mathbf X_{k+1})
&\le f(\mathbf X_k)
-\eta\|\nabla f(\mathbf X_k)\|_{{\bf v},{\bf q}}
+2\eta\|\Delta \mathbf M_k\|_{{\bf v},{\bf q}}
+\frac{\eta^2}{2}
\qquad \forall k\ge0.
\end{align}
\end{lemma}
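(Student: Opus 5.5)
The argument mirrors the single-block descent inequality (Lemma \ref{lem:desc-ineq-Sp-ball}), with the pair $S_p$/$S_q$ replaced by the mixed-norm pair $\|\cdot\|_{{\bf w},{\bf p}}$/$\|\cdot\|_{{\bf v},{\bf q}}$.

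First, identify the step. By \eqref{eq:update-xk-multi-1} and the definition \eqref{def:sol-ball-reg-multi},
\[
\mathbf X_{k+1}-\mathbf X_k=\eta\,\mathbf Z_k,\qquad \mathbf Z_k\in\Delta^{\rm b}_{{\bf w},{\bf p}}(\mathbf M_k).
\]
We take the selection of Lemma \ref{lem:mult-block-subproblem}, so that $\|\mathbf Z_k\|_{{\bf w},{\bf p}}\le 1$. The only property actually needed is
\[
\langle \mathbf M_k,\mathbf Z_k\rangle=-\|\mathbf M_k\|_{{\bf v},{\bf q}}.
\]
This follows directly from the duality relation stated in Section \ref{sec:not-pre}, namely $\|\mathbf G\|_{{\bf v},{\bf q}}=\max_{\|\mathbf X\|_{{\bf w},{\bf p}}\le1}\langle \mathbf G,\mathbf X\rangle$, applied to $\mathbf G=-\mathbf M_k$. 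It also covers the degenerate case $\|\mathbf M_k\|_{{\bf v},{\bf q}}=0$, where the selection is $0$.

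Second, apply the quadratic upper bound \eqref{ineq:desc-multi} with $\mathbf X=\mathbf X_k$ and $\mathbf Y=\mathbf X_{k+1}$:
\[
f(\mathbf X_{k+1})\le f(\mathbf X_k)+\eta\langle\nabla f(\mathbf X_k),\mathbf Z_k\rangle+\tfrac{\eta^2}{2}\|\mathbf Z_k\|_{{\bf w},{\bf p}}^2\le f(\mathbf X_k)+\eta\langle\nabla f(\mathbf X_k),\mathbf Z_k\rangle+\tfrac{\eta^2}{2}.
\]
Now split the inner product as
\[
\langle\nabla f(\mathbf X_k),\mathbf Z_k\rangle=\langle\mathbf M_k,\mathbf Z_k\rangle-\langle\Delta\mathbf M_k,\mathbf Z_k\rangle.
\]
The first term equals $-\|\mathbf M_k\|_{{\bf v},{\bf q}}$. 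For the second, the Hölder-type inequality from Section \ref{sec:not-pre} gives $-\langle\Delta\mathbf M_k,\mathbf Z_k\rangle\le\|\Delta\mathbf M_k\|_{{\bf v},{\bf q}}$, since $\|\mathbf Z_k\|_{{\bf w},{\bf p}}\le1$.

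Third, pass from $\mathbf M_k$ to the true gradient. The triangle inequality for the norm $\|\cdot\|_{{\bf v},{\bf q}}$ gives
\[
-\|\mathbf M_k\|_{{\bf v},{\bf q}}\le-\|\nabla f(\mathbf X_k)\|_{{\bf v},{\bf q}}+\|\Delta\mathbf M_k\|_{{\bf v},{\bf q}}.
\]
Combining the three steps yields \eqref{ineq:matrix-sign-descent-multi}, with the factor $2\eta$ coming from the two occurrences of $\|\Delta\mathbf M_k\|_{{\bf v},{\bf q}}$.

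There is no real obstacle. The only point needing care is to confirm that $\|\cdot\|_{{\bf v},{\bf q}}$ is indeed the dual norm of $\|\cdot\|_{{\bf w},{\bf p}}$, so that both the maximizer identity and the Hölder bound hold. This equality of the max, and not only the inequality, is asserted in Section \ref{sec:not-pre}. Alternatively, it can be verified by plugging the explicit selection \eqref{def:multi-ball-oracle} into $\langle\mathbf M_k,\cdot\rangle$ and using Lemma \ref{lem:single-block-oracle} blockwise, together with $\beta(\alpha-1)\ldots$, i.e.\ $(\beta-1)\alpha=\beta$.
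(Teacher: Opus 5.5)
Your proposal is correct and follows the paper's proof essentially step for step: the quadratic upper bound \eqref{ineq:desc-multi}, the oracle identity $\langle \mathbf M_k,\mathbf X_{k+1}-\mathbf X_k\rangle=-\eta\|\mathbf M_k\|_{{\bf v},{\bf q}}$, the mixed-norm H\"older bound on the error term, and the triangle inequality. Your use of $\|\mathbf Z_k\|_{{\bf w},{\bf p}}\le 1$ instead of the paper's equality $\|\mathbf X_{k+1}-\mathbf X_k\|_{{\bf w},{\bf p}}=\eta$ is slightly more careful, because it also covers the degenerate case $\mathbf M_k=0$, where the selected step is zero.
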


The next lemma gives an upper bound on the estimation error $\{\Delta {\bf M}_k\}$ for the momentum directions generated by Algorithm \ref{alg:multi-sdm-linear}. Its proof is deferred to Section \ref{sec:pf-multi-ball}.

\begin{lemma}\label{lem:multi-key-relation}
Suppose that Assumption \ref{asp:multi-basic} holds. Let
$\{(\mathbf X_k,\mathbf M_k)\}$ be generated by Algorithm
\ref{alg:multi-sdm-linear} with input parameters $\eta>0$ and
$\theta\in(0,1]$, and let $\{(\Delta \mathbf M_k,\Delta \mathbf G_k)\}$ be
defined in \eqref{def:Delta-not-multi}. Then,
\begin{align}\label{ineq:akeyrelation-multi}
\|\Delta \mathbf M_k\|_{{\bf v},{\bf q}}
&\le (1-\theta)^k\|\Delta \mathbf M_0\|_{{\bf v},{\bf q}}
+\theta
\Big\|
\sum_{t=0}^{k-1}(1-\theta)^t\Delta \mathbf G_{k-t}
\Big\|_{{\bf v},{\bf q}}
+\eta\sum_{t=0}^{k-1}(1-\theta)^{t+1}\qquad\forall k\ge0.
\end{align}
\end{lemma}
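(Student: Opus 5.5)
The plan is to unroll the momentum recursion for $\Delta\mathbf M_k$, exactly as in the single-block Lemma~\ref{lem:momeutm-bd-Sp-ball}, and then take $\|\cdot\|_{{\bf v},{\bf q}}$-norms using the triangle inequality. The Lipschitz condition in Assumption~\ref{asp:multi-basic}(c), together with the fact that each ball-oracle step has length at most $\eta$, will control the gradient drift. The case $k=0$ is trivial because both sums are empty, so I fix $k\ge1$.

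\emph{Step 1: one-step recursion.} Using \eqref{update-mk-multi-1} and the shorthand \eqref{def:Delta-not-multi}, for every $s\ge1$,
\begin{align*}
\Delta\mathbf M_s &= (1-\theta)\mathbf M_{s-1}+\theta\bar{\mathbf G}_s-\nabla f(\mathbf X_s)\\
&= (1-\theta)\Delta\mathbf M_{s-1}+\theta\Delta\mathbf G_s+(1-\theta)\big(\nabla f(\mathbf X_{s-1})-\nabla f(\mathbf X_s)\big).
\end{align*}
This identity is purely algebraic and blockwise, so the mixed-norm structure plays no role here.

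\emph{Step 2: unrolling.} Iterating the recursion from $s=k$ down to $s=1$ gives
\begin{align*}
\Delta\mathbf M_k=(1-\theta)^k\Delta\mathbf M_0+\theta\sum_{t=0}^{k-1}(1-\theta)^t\Delta\mathbf G_{k-t}+\sum_{t=0}^{k-1}(1-\theta)^{t+1}\big(\nabla f(\mathbf X_{k-t-1})-\nabla f(\mathbf X_{k-t})\big).
\end{align*}
I would verify this by a short induction on $k$.

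\emph{Step 3: taking norms.} Apply the triangle inequality for $\|\cdot\|_{{\bf v},{\bf q}}$, which is a genuine norm as the dual of $\|\cdot\|_{{\bf w},{\bf p}}$ (Section~\ref{sec:not-pre}). The first two terms of Step 2 give exactly the first two terms of \eqref{ineq:akeyrelation-multi}; the stochastic sum is left intact inside the norm, since it is later handled by Lemma~\ref{lem:multi-block-martingale}. For each drift term, Assumption~\ref{asp:multi-basic}(c) yields
\[
\|\nabla f(\mathbf X_{s-1})-\nabla f(\mathbf X_s)\|_{{\bf v},{\bf q}}\le\|\mathbf X_s-\mathbf X_{s-1}\|_{{\bf w},{\bf p}}\le\eta,
\]
where the last inequality holds because $\mathbf X_s$ is feasible for \eqref{eq:update-xk-multi-1}. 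Summing with the weights $(1-\theta)^{t+1}$ produces $\eta\sum_{t=0}^{k-1}(1-\theta)^{t+1}$.

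\emph{Main obstacle.} No step is deep; the only point needing care is the index bookkeeping in Step 2, so that the drift weights come out as $(1-\theta)^{t+1}$ and the noise weights as $(1-\theta)^t$ with $\Delta\mathbf G_{k-t}$. It is also worth noting that the weights $L_j$ are already absorbed into the mixed norms through Assumption~\ref{asp:multi-basic}(c). This is why, unlike in Lemma~\ref{lem:momeutm-bd-Sp-ball}, no Lipschitz constant appears in the final term.
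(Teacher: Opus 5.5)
Your proposal is correct and matches the paper's proof: unroll the momentum recursion blockwise as in \eqref{eq:DeltaMk-idtt-alg1}, apply the triangle inequality for $\|\cdot\|_{{\bf v},{\bf q}}$, and bound each gradient-drift term by the step length using Assumption~\ref{asp:multi-basic}(c). Your bound $\|\mathbf X_s-\mathbf X_{s-1}\|_{{\bf w},{\bf p}}\le\eta$, justified by feasibility, is slightly more careful than the paper's equality $=\eta$, which can fail for the zero selection when $\mathbf M_{s-1}=0$.
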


The next lemma gives an upper bound on the sum of the expected estimation error over the momentum directions generated by Algorithm \ref{alg:multi-sdm-linear}. Its proof is deferred to Section \ref{sec:pf-multi-ball}.

\begin{lemma}\label{lem:multi-F-con}
Suppose that Assumption \ref{asp:multi-basic} holds with
$\beta\in[1,2]$ and $q_j\in[1,2]$ for all $j\in[J]$.
Let $\{(\mathbf X_k,\mathbf M_k)\}$ be generated by Algorithm
\ref{alg:multi-sdm-linear} with input parameters $\eta>0$, $\theta\in(0,1]$,
and $B\ge1$. Let $\{\Delta \mathbf M_k\}$ be defined in
\eqref{def:Delta-not-multi}. Define
\begin{align}\label{def:sigma-multi}
\sigma_{{\bf v},{\bf q},B}
&:=
\frac{1}{\sqrt B}
\left\|
\left(
L_1^{-1/2}\|V_1\|_{S_{q_1}},
\ldots,
L_J^{-1/2}\|V_J\|_{S_{q_J}}
\right)
\right\|_\beta.
\end{align}
Then, for all $K\ge1$,
\begin{align}\label{ineq:expec-upbd-multi-norm}
\sum_{k=0}^{K-1}
\E[\|\Delta \mathbf M_k\|_{{\bf v},{\bf q}}]
&\le \frac{\sigma_{{\bf v},{\bf q},B}}{\theta}
+\sqrt{\theta}K\sigma_{{\bf v},{\bf q},B}
+\frac{\eta K}{\theta}.
\end{align}
\end{lemma}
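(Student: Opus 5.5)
The plan is to mirror the single-block argument of Lemma \ref{lem:F-con}, with Lemma \ref{lem:multi-block-martingale} playing the role of Lemma \ref{lem:concentration-Sp}. We start from the deterministic bound \eqref{ineq:akeyrelation-multi} of Lemma \ref{lem:multi-key-relation}, take expectations, and sum over $k=0,\ldots,K-1$. This leaves three contributions: the initialization term, the martingale term, and the drift term.

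For the initialization term, note that $\Delta\mathbf M_0=\Delta\mathbf G_0$. Jensen's inequality gives $\E[\|\Delta\mathbf M_0\|_{{\bf v},{\bf q}}]\le(\E[\|\Delta\mathbf G_0\|_{{\bf v},{\bf q}}^2])^{1/2}$. We then apply Lemma \ref{lem:multi-block-martingale} with a single summand and the choice $\Lambda_j=(V_jV_j^T)^{\frac{2-q_j}{2}}$. To avoid singularity, we use $(V_jV_j^T+\epsilon I)^{\frac{2-q_j}{2}}$ and let $\epsilon\downarrow0$. With this choice,
\[
\|\Lambda_j\|_{S_{\frac{q_j}{2-q_j}}}=\|V_j\|_{S_{q_j}}^{2-q_j}
\quad\text{and}\quad
\mathrm{Tr}(V_jV_j^T\Lambda_j^{-1})=\|V_j\|_{S_{q_j}}^{q_j}.
\]
The product is $\|V_j\|_{S_{q_j}}^2$, exactly as in the single-block proof. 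Using the conditional covariance bound $\E[[\Delta\mathbf G]_j[\Delta\mathbf G]_j^T]\preceq V_jV_j^T/B$ (by the tower property) and the monotonicity of $\mathrm{Tr}(\cdot\,\Lambda_j^{-1})$, we get $B_j^2\le L_j^{-1}\|V_j\|_{S_{q_j}}^2/B$. Hence $\|(B_1,\ldots,B_J)\|_\beta\le\sigma_{{\bf v},{\bf q},B}$. Summing the factor $(1-\theta)^k$ over $k$ then yields the term $\sigma_{{\bf v},{\bf q},B}/\theta$.

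For the martingale term, set $\mathbf A_t:=(1-\theta)^t\Delta\mathbf G_{k-t}$, reindexed in forward time so that it is a martingale difference sequence with respect to the filtration $\mathcal F_k$. Strictly, Lemma \ref{lem:multi-block-martingale} is stated for the natural filtration. However, its proof only uses conditional mean zero, so we can either apply it to the reversed-order sequence or note that $\mathcal F_k$ is finer; I expect this bookkeeping to be the main point to check carefully.

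With the same choice of $\Lambda_j$, we obtain $B_j^2\le L_j^{-1}\|V_j\|_{S_{q_j}}^2 B^{-1}\sum_t(1-\theta)^{2t}\le L_j^{-1}\|V_j\|_{S_{q_j}}^2/(B\theta)$. Therefore
\[
\theta\,\E\Big[\Big\|\sum_{t=0}^{k-1}(1-\theta)^t\Delta\mathbf G_{k-t}\Big\|_{{\bf v},{\bf q}}\Big]\le\theta\cdot\frac{\sigma_{{\bf v},{\bf q},B}}{\sqrt\theta}=\sqrt\theta\,\sigma_{{\bf v},{\bf q},B},
\]
again by Jensen. Summing over $K$ indices gives $\sqrt\theta K\sigma_{{\bf v},{\bf q},B}$.

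Finally, the drift term satisfies $\eta\sum_{t=0}^{k-1}(1-\theta)^{t+1}\le\eta(1-\theta)/\theta\le\eta/\theta$, and summing over $k$ gives $\eta K/\theta$. Adding the three contributions gives \eqref{ineq:expec-upbd-multi-norm}. Apart from the filtration subtlety, the only delicate point is the pseudo-inverse or limiting argument for the choice of $\Lambda_j$, which is handled exactly as in the single-block Lemma \ref{lem:F-con}.
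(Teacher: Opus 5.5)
Your proposal is correct and follows the paper's proof essentially step for step: you use the same regularized choice $\Lambda_{j,\varepsilon}=(V_jV_j^T+\varepsilon I)^{1-q_j/2}$ in Lemma~\ref{lem:multi-block-martingale}, Jensen to pass from second to first moments, the one-term sequence for $\Delta\mathbf M_0=\Delta\mathbf G_0$, and summation of \eqref{ineq:akeyrelation-multi} over $k$. The filtration point you flag is settled as you suggest: apply the tower property to the forward-time sequence $(1-\theta)^{k-s}\Delta\mathbf G_s$, $s=1,\ldots,k$, a step the paper leaves implicit.
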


The following theorem provides an upper bound on the expected $S_q$-norm-based stationarity achieved by Algorithm \ref{alg:multi-sdm-linear}. Its proof is deferred to Section \ref{sec:pf-multi-ball}.

\begin{theorem}\label{thm:multi-F-con}
Suppose that Assumption \ref{asp:multi-basic} holds with
$\beta\in[1,2]$ and $q_j\in[1,2]$ for all $j\in[J]$.
Let $K$ be the maximum iteration number for running Algorithm
\ref{alg:multi-sdm-linear}, and assume
$\Delta_f:=f(\mathbf X_0)-f_{\mathrm{low}}>0$. Let $\{\mathbf X_k\}$ be the iterates generated by
Algorithm \ref{alg:multi-sdm-linear} with input parameters
\begin{align}\label{def:eta-theta-F-multi}
\eta
=\sqrt{\frac{\Delta_f\theta}{K}},
\qquad
\theta
=\min\left\{
\frac{1}{\sigma_{{\bf v},{\bf q},B}}
\sqrt{\frac{\Delta_f}{K}},
1
\right\},
\end{align}
where $\sigma_{{\bf v},{\bf q},B}$ is defined in \eqref{def:sigma-multi}. 
Then, for all $K\ge1$, it holds that
\begin{align}\label{ineq:ave-stat-F-multi}
\frac{1}{K}
\sum_{k=0}^{K-1}
\E[\|\nabla f(\mathbf X_k)\|_{{\bf v},{\bf q}}]
&\le
6\sqrt{\sigma_{{\bf v},{\bf q},B}}
\left(\frac{\Delta_f}{K}\right)^{1/4}
+8\sqrt{\frac{\Delta_f}{K}}
+\frac{2\sigma_{{\bf v},{\bf q},B}^2}{\sqrt{\Delta_f K}}.
\end{align}
\end{theorem}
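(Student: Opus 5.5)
The plan is to telescope the descent inequality of Lemma \ref{lem:multi-ball-descent} and then control the accumulated momentum error with Lemma \ref{lem:multi-F-con}, mirroring the single-block argument for Theorem \ref{thm:F-con}. First, sum \eqref{ineq:matrix-sign-descent-multi} over $k=0,\ldots,K-1$, take expectations, and use $f(\mathbf X_K)\ge f_{\mathrm{low}}$. This gives
\begin{align*}
\frac1K\sum_{k=0}^{K-1}\E[\|\nabla f(\mathbf X_k)\|_{{\bf v},{\bf q}}]\le \frac{\Delta_f}{\eta K}+\frac{2}{K}\sum_{k=0}^{K-1}\E[\|\Delta\mathbf M_k\|_{{\bf v},{\bf q}}]+\frac{\eta}{2}.
\end{align*}
Next, substitute \eqref{ineq:expec-upbd-multi-norm} for the middle sum. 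The right-hand side becomes
\begin{align*}
\frac{\Delta_f}{\eta K}+\frac{\eta}{2}+\frac{2\eta}{\theta}+\frac{2\sigma_{{\bf v},{\bf q},B}}{\theta K}+2\sqrt\theta\,\sigma_{{\bf v},{\bf q},B}.
\end{align*}
Write $\sigma:=\sigma_{{\bf v},{\bf q},B}$ for brevity.

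Now plug in $\eta=\sqrt{\Delta_f\theta/K}$. Then $\Delta_f/(\eta K)=\sqrt{\Delta_f/(\theta K)}$ and $2\eta/\theta=2\sqrt{\Delta_f/(\theta K)}$. Since $\theta\le1$, we also have $\eta/2\le\frac12\sqrt{\Delta_f/(\theta K)}$. The bound therefore reduces to
\begin{align*}
\tfrac72\sqrt{\Delta_f/(\theta K)}+2\sigma/(\theta K)+2\sqrt\theta\,\sigma.
\end{align*}

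Split into two cases according to the minimum defining $\theta$.

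\emph{Case $\theta=\sigma^{-1}\sqrt{\Delta_f/K}\le1$.} Here $\sqrt{\Delta_f/(\theta K)}=\sqrt{\sigma}(\Delta_f/K)^{1/4}$ and $\sqrt\theta\,\sigma$ equals the same quantity. Also $\sigma/(\theta K)=\sigma^2/\sqrt{\Delta_f K}$. The total is $\tfrac{11}{2}\sqrt\sigma(\Delta_f/K)^{1/4}+2\sigma^2/\sqrt{\Delta_fK}$, which is within the claimed constants $6$ and $2$.

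\emph{Case $\theta=1$, i.e.\ $\sigma\le\sqrt{\Delta_f/K}$.} The bound becomes $\tfrac72\sqrt{\Delta_f/K}+2\sigma/K+2\sigma$. Using $\sigma\le\sqrt{\Delta_f/K}$ and $K\ge1$, this is at most $\tfrac{15}{2}\sqrt{\Delta_f/K}\le 8\sqrt{\Delta_f/K}$.

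Combining the two cases gives \eqref{ineq:ave-stat-F-multi}. The assumption $\Delta_f>0$ ensures $\theta>0$, so all divisions are legitimate. If $\sigma=0$, the minimum is interpreted as $\theta=1$, which falls under the second case.

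The only real obstacle is constant bookkeeping in the case split, in particular ensuring that the terms $\eta/2$ and $2\sigma/(\theta K)$ are absorbed with coefficients no larger than those stated. All the probabilistic difficulty has already been handled in Lemma \ref{lem:multi-F-con} through Lemma \ref{lem:multi-block-martingale}, so no further martingale argument is needed here.
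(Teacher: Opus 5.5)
Your proposal is correct and follows the paper's proof essentially step for step: telescope Lemma~\ref{lem:multi-ball-descent}, substitute Lemma~\ref{lem:multi-F-con}, plug in $\eta=\sqrt{\Delta_f\theta/K}$, and split on whether $\theta=1$. The only difference is cosmetic. You bound $\eta/2$ by $\frac12\sqrt{\Delta_f/(\theta K)}$ rather than merging $\eta/2+2\eta/\theta\le 3\eta/\theta$ as the paper does, which gives slightly sharper constants ($11/2$ and $15/2$) that are still within the stated $6$ and $8$.
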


\subsection{Multi-block coupling with regularization oracle}\label{sec:multi-reg}

In this subsection, we propose and analyze a multi-block method with regularization oracle. This method is a multi-block extension of the $S_p$-$S_q$ method with regularization oracle (Algorithm \ref{alg:sdm-quadratic}) for solving problem \eqref{pb:matrix-opt-multi}, where the $S_p$-norm used to construct the regularization oracle is replaced by the mixed norm $\|\cdot\|_{{\bf w},{\bf p}}$. Details of this method are presented in Algorithm \ref{alg:multi-sdm-quadratic}.

\begin{algorithm}[!htbp] 
\caption{A multi-block method with regularization oracle} 
\label{alg:multi-sdm-quadratic} 
\begin{algorithmic}[0] 
\State \textbf{Input:} starting iterate $\mathbf X_0\in\MBspace$, momentum parameter $\theta\in(0,1]$, norm parameters $\alpha\in[2,\infty]$ and $\mathbf p\in[2,\infty]^J$, step size $\eta>0$, batch size $B\ge1$. 
\For{$k=0,1,2,\ldots$} 
\State Sample a mini-batch $\{\xi_{k,i}\}_{i=1}^B$ and compute $\bar{\mathbf G}_k=\frac{1}{B}\sum_{i=1}^B \mathbf G(\mathbf X_k;\xi_{k,i})$.
\State Compute the search direction: 
\begin{align}\label{update-mk-multi-2} 
\mathbf M_k = \begin{cases}
(1 - \theta) \mathbf M_{k-1} + \theta \bar{\mathbf G}_k, &\text{if } k \geq 1,\\
\bar{\mathbf G}_0, & \text{if } k =0.
\end{cases}
\end{align}
\State Update the next iterate:
\begin{align}\label{eq:update-xk-multi-2}
\mathbf X_{k+1}
= \argmin_{\mathbf X\in\MBspace}
\bigg\{
\langle \mathbf M_k, \mathbf X-\mathbf X_k\rangle
+ \frac{1}{2\eta}\|\mathbf X-\mathbf X_k\|_{{\bf w},{\bf p}}^2
\bigg\}.
\end{align} 
\EndFor
\end{algorithmic} 
\end{algorithm}  

We now make a few remarks on the solution of the quadratically regularized subproblem \eqref{eq:update-xk-multi-2}. Similar to \eqref{eq:update-xk-multi-1}, one valid evaluation of \eqref{eq:update-xk-multi-2} can always be obtained by rescaling the solutions of $J$ separate $S_{p_j}$-norm regularized problems, one for each block. The weighted $\ell_\alpha$-norm structure in $\|\cdot\|_{{\bf w},{\bf p}}$ only affects the step size for each block. For illustration, we consider the cases of the weighted $\ell_\alpha$-norm separately for $\alpha=2$ and $\alpha\in(2,\infty]$:
\begin{itemize}[leftmargin = .6cm]
    \item $\ell_2$-norm: In this case, \eqref{eq:update-xk-multi-2} employs the mixed norm $\|\cdot\|_{{\bf w},{\bf p}}$ with ${\bf w}=(2,L_1,\ldots,L_J)$. By the definition of $\|\cdot\|_{{\bf w},{\bf p}}$, \eqref{eq:update-xk-multi-2} is immediately separable. Consequently, we obtain
    \begin{align*}
    [X_{k+1}]_j = \argmin_{[X]_j}\left\{
    \langle [M_k]_j, [X]_j- [X_k]_j\rangle + \frac{L_j}{2\eta}\|[X]_j- [X_k]_j\|_{S_{p_j}}^2\right\}\qquad  \forall j\in[J].
    \end{align*}
    Therefore, the $\ell_2$-norm coupling simply yields a step size of $L_j^{-1}\eta$ for the $j$th block, where $L_j$ is the block-wise Lipschitz constant associated with the $j$th block.
    
    \item $\ell_\alpha$-norm ($\alpha\in(2,\infty]$): In this case, although the resulting subproblem appears non-separable, Lemma \ref{lem:mult-block-subproblem} shows that its solution can nevertheless be obtained by solving $J$ block-wise subproblems. Specifically, solving \eqref{eq:update-xk-multi-2} is equivalent to solving the following $J$ subproblems:
    \begin{align*}
        [X_{k+1}]_j = \argmin_{[X]_j}\left\{
    \langle [M_k]_j, [X]_j- [X_k]_j\rangle + \frac{L_j}{2\tilde\omega_j\eta}\|[X]_j- [X_k]_j\|_{S_{p_j}}^2\right\}\qquad  \forall j\in[J],
    \end{align*}
    where $\tilde\omega_j$'s are rescaling factors given by
    \begin{align*}
    \tilde\omega_j := \left( \frac{L_j^{-1/2}\|[M]_j\|_{S_{q_j}}}{\|{\bf M}\|_{{\bf v},{\bf q}}}\right)^{\beta-2}\qquad \forall j\in[J],
\end{align*}
    and the vector $(\tilde\omega_1,\ldots,\tilde\omega_J)$ satisfies $\|(\tilde\omega_1,\ldots,\tilde\omega_J)\|_{\beta/(\beta-2)}=1$. Thus, $\ell_\alpha$-norm-based coupling yields a step size of $L_j^{-1}\tilde\omega_j\eta$, where the rescaling factors $\{\tilde\omega_j\}_{j=1}^J$ form a unit vector in terms of $\ell_{\beta/(\beta-2)}$-norm, and $\tilde\omega_j$ is smaller when $\|[M]_j\|_{S_{q_j}}$ is larger.
\end{itemize}

The following lemma provides a descent inequality for the function value when updating with the mixed-norm-based regularization oracle of Algorithm \ref{alg:multi-sdm-quadratic}. Its proof is deferred to Section \ref{sec:pf-multi-reg}.

\begin{lemma}\label{lem:multi-qo-descent}
Suppose that Assumption \ref{asp:multi-basic} holds. Let
$\{(\mathbf X_k,\mathbf M_k)\}$ be generated by Algorithm
\ref{alg:multi-sdm-quadratic} with step size $\eta\in(0,1]$, and let
$\{\Delta \mathbf M_k\}$ be defined in \eqref{def:Delta-not-multi}. Then, for
any $k\ge0$,
\begin{align}
f(\mathbf X_{k+1})
&\le f(\mathbf X_k)
-\left(\frac{1}{4\eta}-\frac{1}{4}\right)
\|\mathbf X_{k+1}-\mathbf X_k\|_{{\bf w},{\bf p}}^2 \nonumber\\
&\quad
-\frac{\eta}{8}(1-\eta)
\|\nabla f(\mathbf X_k)\|_{{\bf v},{\bf q}}^2
+\frac{\eta}{4}(3-\eta)
\|\Delta \mathbf M_k\|_{{\bf v},{\bf q}}^2.
\label{ineq:matrix-sign-descent-qo-multi}
\end{align}
\end{lemma}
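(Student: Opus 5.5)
The plan is to copy the single-block argument behind Lemma~\ref{lem:ineq-descent-qmo-single}, with $\|\cdot\|_{S_p}$ and $\|\cdot\|_{S_q}$ replaced by the mixed pair $\|\cdot\|_{{\bf w},{\bf p}}$ and $\|\cdot\|_{{\bf v},{\bf q}}$, and with $L_{p,q}$ set to $1$. The only facts about the norms we need are duality and the closed form of the regularization oracle from Lemma~\ref{lem:mult-block-subproblem}.

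\textbf{Step 1: the update.} Write $\mathbf D_k:=\mathbf X_{k+1}-\mathbf X_k$. By rescaling \eqref{eq:update-xk-multi-2} and applying Lemma~\ref{lem:mult-block-subproblem}, we get $\mathbf D_k=\eta\,\Delta^{\rm r}_{{\bf w},{\bf p}}(\mathbf M_k)$. From \eqref{def:multi-ball-oracle}--\eqref{def:multi-reg-oracle} and the single-block identities $\langle [M]_j,\Delta^{\rm b}_{p_j}([M]_j)\rangle=-\|[M]_j\|_{S_{q_j}}$ and $\|\Delta^{\rm b}_{p_j}([M]_j)\|_{S_{p_j}}=1$, together with conjugacy of $(\alpha,\beta)$, we obtain
\[
\langle \mathbf M_k,\mathbf D_k\rangle=-\eta\|\mathbf M_k\|_{{\bf v},{\bf q}}^2
\quad\text{and}\quad
\|\mathbf D_k\|_{{\bf w},{\bf p}}=\eta\|\mathbf M_k\|_{{\bf v},{\bf q}}.
\]
Consequently $\langle \mathbf M_k,\mathbf D_k\rangle=-\tfrac{1}{2\eta}\|\mathbf D_k\|^2_{{\bf w},{\bf p}}-\tfrac{\eta}{2}\|\mathbf M_k\|^2_{{\bf v},{\bf q}}$. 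I expect this to be the main technical point. Checking the two exponent identities requires $\sum_j (L_j^{-1/2}\|[M]_j\|_{S_{q_j}})^{\beta}=\|\mathbf M\|_{{\bf v},{\bf q}}^\beta$ and $(\beta-1)\alpha=\beta$. The case $\alpha=\infty$, $\beta=1$ has to be checked separately.

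\textbf{Step 2: descent.} Apply \eqref{ineq:desc-multi} with $\mathbf Y=\mathbf X_{k+1}$ and split $\nabla f(\mathbf X_k)=\mathbf M_k-\Delta\mathbf M_k$. This gives
\[
f(\mathbf X_{k+1})\le f(\mathbf X_k)-\Big(\tfrac1{2\eta}-\tfrac12\Big)\|\mathbf D_k\|^2_{{\bf w},{\bf p}}-\tfrac\eta2\|\mathbf M_k\|^2_{{\bf v},{\bf q}}+\|\Delta\mathbf M_k\|_{{\bf v},{\bf q}}\|\mathbf D_k\|_{{\bf w},{\bf p}}.
\]
Substitute $\|\mathbf D_k\|_{{\bf w},{\bf p}}=\eta\|\mathbf M_k\|_{{\bf v},{\bf q}}$ into the cross term and use Young's inequality to get $\eta\|\Delta\mathbf M_k\|\|\mathbf M_k\|\le \tfrac\eta4\|\mathbf M_k\|^2+\eta\|\Delta\mathbf M_k\|^2$. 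Then divide the remaining quadratic terms so that half of $(\tfrac1{2\eta}-\tfrac12)\|\mathbf D_k\|^2$ is kept, namely $(\tfrac1{4\eta}-\tfrac14)\|\mathbf D_k\|^2$. The other half equals $(\tfrac{\eta}{4}-\tfrac{\eta^2}{4})\|\mathbf M_k\|^2$ and is converted into a $\|\mathbf M_k\|^2$ term. This conversion needs $\eta\le1$.

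\textbf{Step 3: back to the gradient.} Use $\|\nabla f(\mathbf X_k)\|^2\le 2\|\mathbf M_k\|^2+2\|\Delta\mathbf M_k\|^2$, so $-c\|\mathbf M_k\|^2\le -\tfrac c2\|\nabla f\|^2+c\|\Delta\mathbf M_k\|^2$. Take $c=\tfrac\eta4(1-\eta)$, which yields the stated coefficient $-\tfrac\eta8(1-\eta)$ on $\|\nabla f\|^2$. Finally collect the $\|\Delta\mathbf M_k\|^2$ coefficients. The target is $\tfrac\eta4(3-\eta)$; if the Young split above does not give this exactly, adjust the split constants (for example, handle the cross term as $\|\Delta\mathbf M\|\|\mathbf D\|\le \tfrac{1}{4\eta}\|\mathbf D\|^2+\eta\|\Delta\mathbf M\|^2$ directly). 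The bookkeeping is the same as in the single-block proof with $L_{p,q}=1$.
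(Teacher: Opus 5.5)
\textbf{Gap: the Young constants in Steps 2--3 cannot give the stated coefficient.} Your overall route is the same as the paper's: the descent inequality, the identities $\langle\mathbf M_k,\mathbf D_k\rangle=-\eta\|\mathbf M_k\|_{{\bf v},{\bf q}}^2$ and $\|\mathbf D_k\|_{{\bf w},{\bf p}}=\eta\|\mathbf M_k\|_{{\bf v},{\bf q}}$, a Young step on the cross term, keeping half of the $\|\mathbf D_k\|^2$ term, and $-\|\mathbf M_k\|^2\le-\frac12\|\nabla f(\mathbf X_k)\|^2+\|\Delta\mathbf M_k\|^2$. The problem is the split you commit to (norm subscripts dropped from here on): $\eta\|\Delta\mathbf M_k\|\|\mathbf M_k\|\le\frac{\eta}{4}\|\mathbf M_k\|^2+\eta\|\Delta\mathbf M_k\|^2$. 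This already puts coefficient $\eta$ on $\|\Delta\mathbf M_k\|^2$. Since $\eta>\frac{\eta}{4}(3-\eta)$ for every $\eta>0$, and Step 3 only adds to this coefficient, the target is out of reach. Concretely, after your Step 2 the bound is $-(\frac{1}{4\eta}-\frac14)\|\mathbf D_k\|^2-\frac{\eta}{4}(2-\eta)\|\mathbf M_k\|^2+\eta\|\Delta\mathbf M_k\|^2$. So the $\|\mathbf M_k\|^2$ coefficient is not the $\frac{\eta}{4}(1-\eta)$ your Step 3 assumes, and taking $c=\frac{\eta}{4}(1-\eta)$ leaves $\frac{\eta}{4}(5-\eta)$ on $\|\Delta\mathbf M_k\|^2$. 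Your fallback $\|\Delta\mathbf M_k\|\|\mathbf D_k\|\le\frac{1}{4\eta}\|\mathbf D_k\|^2+\eta\|\Delta\mathbf M_k\|^2$ has the same defect. As written, you prove a strictly weaker inequality than the lemma.

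\textbf{Fix: use the symmetric split, as the paper does.} Take $\|\Delta\mathbf M_k\|\|\mathbf D_k\|\le\frac{\eta}{2}\|\Delta\mathbf M_k\|^2+\frac{1}{2\eta}\|\mathbf D_k\|^2$, or equivalently $\eta\|\Delta\mathbf M_k\|\|\mathbf M_k\|\le\frac{\eta}{2}\|\mathbf M_k\|^2+\frac{\eta}{2}\|\Delta\mathbf M_k\|^2$. This exactly cancels the $-\frac{\eta}{2}\|\mathbf M_k\|^2$ from Step 1 and leaves $-(\frac{1}{2\eta}-\frac12)\|\mathbf D_k\|^2+\frac{\eta}{2}\|\Delta\mathbf M_k\|^2$. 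Halving the first term and applying Step 3 with $c=\frac{\eta}{4}(1-\eta)$ then gives $\frac{\eta}{2}+\frac{\eta}{4}(1-\eta)=\frac{\eta}{4}(3-\eta)$, as required.

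\textbf{Two smaller points.} First, $\eta\le1$ is needed for $c\ge0$ in Step 3, not for the conversion $\|\mathbf D_k\|^2=\eta^2\|\mathbf M_k\|^2$, which is an identity. Second, Step 1 is not really the crux. Both identities hold for every minimizer by duality alone: with $t=\|\mathbf Z\|_{{\bf w},{\bf p}}$ one has $\langle\mathbf M,\mathbf Z\rangle+\frac12\|\mathbf Z\|_{{\bf w},{\bf p}}^2\ge -t\|\mathbf M\|_{{\bf v},{\bf q}}+\frac12t^2\ge-\frac12\|\mathbf M\|_{{\bf v},{\bf q}}^2$, with equality forced at a minimizer. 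So no exponent bookkeeping with the closed-form selection is needed.
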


The next lemma provides an upper bound on the squared estimation error $\{\Delta M_k\}$ for the momentum directions generated by Algorithm \ref{alg:multi-sdm-quadratic}. Its proof is relegated to Section \ref{sec:pf-multi-reg}.

\begin{lemma}\label{lem:multi-square-deter}
Suppose that Assumption \ref{asp:multi-basic} holds. Let
$\{(\mathbf X_k,\mathbf M_k)\}$ be generated by Algorithm
\ref{alg:multi-sdm-quadratic} with input parameters $\eta>0$ and
$\theta\in(0,1]$, and let $\{(\Delta \mathbf M_k,\Delta \mathbf G_k)\}$ be
defined in \eqref{def:Delta-not-multi}. Then,
\begin{align}\label{ineq:square-multi-deter}
\|\Delta \mathbf M_k\|_{{\bf v},{\bf q}}^2
&\le
3(1-\theta)^{2k}\|\Delta \mathbf M_0\|_{{\bf v},{\bf q}}^2
+3\theta^2
\Big\|\sum_{t=0}^{k-1}(1-\theta)^t\Delta \mathbf G_{k-t}\Big\|_{{\bf v},{\bf q}}^2
\nonumber\\
&\quad
+\frac{3(1-\theta)^2}{\theta}
\sum_{t=0}^{k-1}(1-\theta)^t
\|\mathbf X_{k-t-1}-\mathbf X_{k-t}\|_{{\bf w},{\bf p}}^2.
\end{align}
\end{lemma}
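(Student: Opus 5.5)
We first unroll the momentum recursion. For $k\ge1$, the update \eqref{update-mk-multi-2} gives
\[
\Delta\mathbf M_k=(1-\theta)\Delta\mathbf M_{k-1}+\theta\Delta\mathbf G_k+(1-\theta)\big(\nabla f(\mathbf X_{k-1})-\nabla f(\mathbf X_k)\big).
\]
Iterating this identity down to $\Delta\mathbf M_0$ yields
\[
\Delta\mathbf M_k=(1-\theta)^k\Delta\mathbf M_0+\theta\sum_{t=0}^{k-1}(1-\theta)^t\Delta\mathbf G_{k-t}+\sum_{t=0}^{k-1}(1-\theta)^{t+1}\big(\nabla f(\mathbf X_{k-t-1})-\nabla f(\mathbf X_{k-t})\big).
\]
This step is purely algebraic. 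It is the same unrolling that underlies Lemma \ref{lem:multi-key-relation}, and it does not depend on which oracle produced the iterates. For $k=0$ the sums are empty and the claimed bound is trivial, since $\|\Delta\mathbf M_0\|^2\le3\|\Delta\mathbf M_0\|^2$.

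Next we take $\|\cdot\|_{{\bf v},{\bf q}}^2$ of both sides. The triangle inequality together with $(a+b+c)^2\le3(a^2+b^2+c^2)$ produces the three terms. The first two terms already match \eqref{ineq:square-multi-deter}.

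For the third term, we first apply the triangle inequality and then Assumption \ref{asp:multi-basic}(c). This bounds its norm by $\sum_{t=0}^{k-1}(1-\theta)^{t+1}d_t$, where $d_t:=\|\mathbf X_{k-t-1}-\mathbf X_{k-t}\|_{{\bf w},{\bf p}}$.

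The main step, though not a difficult one, is squaring this weighted sum with the right constant. Write the sum as $(1-\theta)\sum_t(1-\theta)^{t/2}\cdot(1-\theta)^{t/2}d_t$ and apply Cauchy--Schwarz:
\[
\Big(\sum_{t=0}^{k-1}(1-\theta)^{t+1}d_t\Big)^2\le(1-\theta)^2\Big(\sum_{t=0}^{k-1}(1-\theta)^t\Big)\sum_{t=0}^{k-1}(1-\theta)^td_t^2\le\frac{(1-\theta)^2}{\theta}\sum_{t=0}^{k-1}(1-\theta)^td_t^2,
\]
using $\sum_t(1-\theta)^t\le1/\theta$. Multiplying by $3$ gives exactly the last term of \eqref{ineq:square-multi-deter}.

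If $\theta=1$, the third term vanishes and the bound still holds trivially. In this case the right-hand side should be read with the convention $0^0=1$, or equivalently by noting that $(1-\theta)^2/\theta=0$. This mirrors the single-block Lemma \ref{lem:momeutm-bd-Sp-reg}, with $L_{p,q}$ absorbed into the normalized Lipschitz condition.
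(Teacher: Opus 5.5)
Your proposal is correct and follows the paper's proof essentially step for step. Both arguments unroll the momentum recursion, bound the mixed norm using the triangle inequality and the unit-constant Lipschitz condition of Assumption \ref{asp:multi-basic}(c), square via $(a+b+c)^2\le 3(a^2+b^2+c^2)$, and control the weighted sum of step lengths by Cauchy--Schwarz (the paper calls this Jensen's inequality) together with $\sum_t(1-\theta)^t\le 1/\theta$.
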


The next lemma gives an upper bound on the sum of the expected squared estimation error over the momentum directions generated by Algorithm \ref{alg:multi-sdm-quadratic}. Its proof is relegated to Section \ref{sec:pf-multi-reg}.

\begin{lemma}\label{lem:multi-qo-error}
Suppose that Assumption \ref{asp:multi-basic} holds with
$\beta\in[1,2]$ and $q_j\in[1,2]$ for all $j\in[J]$.
Let $\{(\mathbf X_k,\mathbf M_k)\}$ be generated by Algorithm
\ref{alg:multi-sdm-quadratic} with input parameters $\eta>0$,
$\theta\in(0,1]$, and $B\ge1$. Let $\{\Delta \mathbf M_k\}$ be defined in
\eqref{def:Delta-not-multi}. Then, for all $K\ge1$,
\begin{align}\label{ineq:expec-upbd-spec-norm-multi}
\sum_{k=0}^{K-1}
\E[\|\Delta \mathbf M_k\|_{{\bf v},{\bf q}}^2]
&\le
\frac{3}{\theta}\sigma_{{\bf v},{\bf q},B}^2
+3\theta K\sigma_{{\bf v},{\bf q},B}^2
+\frac{3}{\theta^2}
\sum_{k=1}^{K-1}
\E[\|\mathbf X_k-\mathbf X_{k-1}\|_{{\bf w},{\bf p}}^2],
\end{align}
where $\sigma_{{\bf v},{\bf q},B}$ is defined in \eqref{def:sigma-multi}.
\end{lemma}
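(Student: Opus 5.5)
We sum the bound of Lemma~\ref{lem:multi-square-deter} over $k=0,\ldots,K-1$, take expectations, and handle its three terms separately, mirroring the proof of Lemma~\ref{lem:F-con-reg} with the mixed norm in place of $S_q$.

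\textbf{Step 1 (initial error).} Since $\mathbf M_0=\bar{\mathbf G}_0$, we have $\Delta\mathbf M_0=\Delta\mathbf G_0$. Apply Lemma~\ref{lem:multi-block-martingale} with a single term ($k=1$), $\mathbf A_0=\Delta\mathbf G_0$, and the choice $\Lambda_j=(V_jV_j^T)^{(2-q_j)/2}$, regularized by $+\epsilon I$ if $V_j$ is singular with $\epsilon\downarrow0$ at the end. This gives $\|\Lambda_j\|_{S_{q_j/(2-q_j)}}=\|V_j\|_{S_{q_j}}^{2-q_j}$. Using $\E[[\Delta G_0]_j[\Delta G_0]_j^T]\preceq V_jV_j^T/B$ together with trace monotonicity against the p.s.d.\ matrix $\Lambda_j^{-1}$, we get $\mathrm{Tr}(V_jV_j^T\Lambda_j^{-1})=\|V_j\|_{S_{q_j}}^{q_j}$. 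Hence $B_j^2\le L_j^{-1}\|V_j\|_{S_{q_j}}^2/B$ and $\E\|\Delta\mathbf M_0\|_{{\bf v},{\bf q}}^2\le\sigma_{{\bf v},{\bf q},B}^2$. Therefore
\[
\sum_k 3(1-\theta)^{2k}\E\|\Delta\mathbf M_0\|^2\le\frac{3}{\theta}\,\sigma_{{\bf v},{\bf q},B}^2,
\]
using $\sum_k(1-\theta)^{2k}\le1/\theta$.

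\textbf{Step 2 (martingale term).} For fixed $k$, reindex so that $\mathbf A_t=(1-\theta)^{k-1-t}\Delta\mathbf G_{t+1}$ for $t=0,\ldots,k-1$. This is a martingale difference sequence with respect to the natural filtration of the $\mathbf A_t$'s: conditioning on the coarser $\sigma$-algebra preserves zero conditional mean by the tower property, since $\E[\Delta\mathbf G_{t}\mid\mathcal F_{t-1}]=0$. Its block second moments are bounded by $(1-\theta)^{2(k-1-t)}V_jV_j^T/B$. With the same $\Lambda_j$, Lemma~\ref{lem:multi-block-martingale} gives
\[
\E\Big\|\sum_t(1-\theta)^t\Delta\mathbf G_{k-t}\Big\|^2\le\sigma_{{\bf v},{\bf q},B}^2\sum_{t=0}^{k-1}(1-\theta)^{2t}\le\frac{\sigma_{{\bf v},{\bf q},B}^2}{\theta}.
\]
Multiplying by $3\theta^2$ and summing over $K$ values of $k$ yields $3\theta K\sigma_{{\bf v},{\bf q},B}^2$. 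The common scalar factor $\sum_t(1-\theta)^{2t}$ passes through the $\ell_\beta$-norm by homogeneity, which is why the same $\Lambda_j$ works for every $t$.

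\textbf{Step 3 (drift term).} Swap the order of summation:
\[
\sum_{k=0}^{K-1}\sum_{t=0}^{k-1}(1-\theta)^t\|\mathbf X_{k-t-1}-\mathbf X_{k-t}\|^2=\sum_{s=1}^{K-1}\|\mathbf X_s-\mathbf X_{s-1}\|^2\sum_{t\ge0}(1-\theta)^t\le\frac1\theta\sum_{s=1}^{K-1}\|\mathbf X_s-\mathbf X_{s-1}\|^2.
\]
Combined with the prefactor $3(1-\theta)^2/\theta\le3/\theta$, this gives the last term.

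\textbf{Main obstacle.} The only delicate step is verifying the hypotheses of Lemma~\ref{lem:multi-block-martingale}: the filtration mismatch with $\{\mathcal F_k\}$, and the choice of $\Lambda_j$ when $V_j$ is singular. Both are handled by the tower property and a limiting argument. The rest is bookkeeping identical to the single-block case.
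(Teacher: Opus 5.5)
Your proposal is correct and follows essentially the same route as the paper. You take expectations in Lemma~\ref{lem:multi-square-deter}, bound the initial and martingale terms via Lemma~\ref{lem:multi-block-martingale} with $\Lambda_j=(V_jV_j^T+\varepsilon I)^{1-q_j/2}$ and $\varepsilon\downarrow0$, and sum over $k$; along the way you correctly fill in details the paper leaves implicit, namely the tower-property check that the reindexed sequence is a martingale difference for its natural filtration and the interchange of summation that produces the $3/\theta^2$ factor.
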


The following theorem provides an upper bound on the expected squared mixed-norm-based stationarity achieved by Algorithm \ref{alg:multi-sdm-quadratic}. Its proof is deferred to Section \ref{sec:pf-multi-reg}.

\begin{theorem}\label{thm:multi-spec-con}
Suppose that Assumption \ref{asp:multi-basic} holds with
$\beta\in[1,2]$ and $q_j\in[1,2]$ for all $j\in[J]$.
Let $K$ be the maximum iteration number for running Algorithm
\ref{alg:multi-sdm-quadratic}, and let
$\Delta_f:=f(\mathbf X_0)-f_{\mathrm{low}}$. Let $\{\mathbf X_k\}$ be the
iterates generated by Algorithm \ref{alg:multi-sdm-quadratic} with input
parameters
\begin{align}\label{def:eta-theta-spec-multi}
\eta=\frac{\theta}{4},
\qquad
\theta
=
\min\left\{
\sqrt{\frac{\Delta_f+\sigma_{{\bf v},{\bf q},B}^2}
{\sigma_{{\bf v},{\bf q},B}^2K}},
1
\right\},
\end{align}
where $\sigma_{{\bf v},{\bf q},B}$ is defined in \eqref{def:sigma-multi}. Then, for all $K\ge1$,
\begin{align}\label{ineq:ave-stat-spec-multi}
\frac{1}{K}\sum_{k=0}^{K-1}
\E[\|\nabla f(\mathbf X_k)\|_{{\bf v},{\bf q}}^2]
&\le
67\left[
\sqrt{
\frac{\sigma_{{\bf v},{\bf q},B}^2
(\Delta_f+\sigma_{{\bf v},{\bf q},B}^2)}{K}}
+\frac{\Delta_f+\sigma_{{\bf v},{\bf q},B}^2}{K}
\right].
\end{align}
\end{theorem}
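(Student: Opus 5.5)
The plan is to mirror the single-block proof of Theorem \ref{thm:spec-con}: combine the descent inequality of Lemma \ref{lem:multi-qo-descent} with the error bound of Lemma \ref{lem:multi-qo-error}, then substitute the parameter choice \eqref{def:eta-theta-spec-multi}. Since $\theta\le1$, we have $\eta=\theta/4\le 1/4$, so Lemma \ref{lem:multi-qo-descent} applies. Its coefficients then satisfy
\[
\frac{1}{4\eta}-\frac14\ge \frac{3}{16\eta},\qquad \frac{\eta}{8}(1-\eta)\ge\frac{3\eta}{32},\qquad \frac{\eta}{4}(3-\eta)\le\frac{3\eta}{4}.
\]

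\textbf{Step 1: telescoping.} Sum the descent inequality over $k=0,\ldots,K-1$, take expectations, and use $f(\mathbf X_K)\ge f_{\mathrm{low}}$. This gives
\[
\frac{3\eta}{32}\sum_{k=0}^{K-1}\E\|\nabla f(\mathbf X_k)\|_{{\bf v},{\bf q}}^2+\frac{3}{16\eta}\sum_{k=0}^{K-1}\E\|\mathbf X_{k+1}-\mathbf X_k\|_{{\bf w},{\bf p}}^2\le \Delta_f+\frac{3\eta}{4}\sum_{k=0}^{K-1}\E\|\Delta\mathbf M_k\|_{{\bf v},{\bf q}}^2 .
\]

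\textbf{Step 2: absorbing the drift term.} Insert Lemma \ref{lem:multi-qo-error} on the right. Its displacement term contributes
\[
\frac{3\eta}{4}\cdot\frac{3}{\theta^2}\sum_{k}\E\|\mathbf X_k-\mathbf X_{k-1}\|^2=\frac{9\eta}{4\theta^2}\sum_{k}\E\|\mathbf X_k-\mathbf X_{k-1}\|^2 .
\]
With $\theta=4\eta$, this coefficient equals $\frac{9}{64\eta}$, which is below $\frac{3}{16\eta}=\frac{12}{64\eta}$. The displacement sum is therefore absorbed by the left side; the index ranges match, since $\sum_{k=1}^{K-1}$ is contained in $\sum_{k=0}^{K-1}$ shifted by one. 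Discarding the remaining nonnegative displacement terms and dividing by $3\eta K/32$ yields
\[
\frac1K\sum_{k}\E\|\nabla f(\mathbf X_k)\|^2\le \frac{32\Delta_f}{3\eta K}+\frac{8}{K}\Big(\frac{3\sigma^2}{\theta}+3\theta K\sigma^2\Big)=\frac{128\Delta_f}{3\theta K}+\frac{24\sigma^2}{\theta K}+24\theta\sigma^2 ,
\]
where $\sigma:=\sigma_{{\bf v},{\bf q},B}$.

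\textbf{Step 3: case analysis on $\theta$.} Bound the last expression by $C\big[\frac{\Delta_f+\sigma^2}{\theta K}+\theta\sigma^2\big]$ with $C\approx 43$.
\begin{itemize}
\item If $\theta=\sqrt{(\Delta_f+\sigma^2)/(\sigma^2K)}<1$, both terms equal $\sqrt{\sigma^2(\Delta_f+\sigma^2)/K}$, for a total of about $2C\sqrt{\cdot}$.
\item If $\theta=1$, then $\sigma^2\le(\Delta_f+\sigma^2)/K$, so $\theta\sigma^2\le (\Delta_f+\sigma^2)/K$, and the total is at most $2C(\Delta_f+\sigma^2)/K$.
\end{itemize}
Tracking constants more carefully should give the stated $67$, since the exact coefficients are $128/3$ and $24$ rather than a uniform $C$.

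The main obstacle is the absorption in Step 2. It hinges on the choice $\eta=\theta/4$ making $\frac{9\eta}{4\theta^2}<\frac{1}{4\eta}-\frac14$. If the slack in $\frac14$ turns out to be problematic, I would use $\eta\le 1/4$ directly, since $\frac{1}{4\eta}-\frac14\ge\frac{3}{16\eta}$ still exceeds $\frac{9}{64\eta}$. The remaining work is careful constant bookkeeping to land exactly at $67$.
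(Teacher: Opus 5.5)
Your proposal is correct and follows the paper's proof essentially step for step. You telescope Lemma~\ref{lem:multi-qo-descent}, insert Lemma~\ref{lem:multi-qo-error}, absorb the displacement sum using $\eta=\theta/4$, reach the same intermediate bound $\frac{128\Delta_f}{3\theta K}+\frac{24\sigma^2}{\theta K}+24\theta\sigma^2$, and then split on $\theta<1$ versus $\theta=1$. To close the constant, bound the first two terms by $43(\Delta_f+\sigma^2)/(\theta K)$ and keep $24\theta\sigma^2$ separate. In each case both $\frac{\Delta_f+\sigma^2}{\theta K}$ and $\theta\sigma^2$ are at most the same target quantity, so the total is $43+24=67$ rather than $2C$.
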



\section{Proof of the main results}
In this section, we provide proofs of our main results in Sections \ref{sec:oracle}-\ref{sec:multi}.

\subsection{Proof of the main results in Section \ref{sec:oracle}}

\begin{proof}[\textbf{Proof of Lemma \ref{lem:single-block-oracle}}]


If $M=0$, the ball oracle is set-valued and we take the selection
$\Delta_p^{\rm b}(0)=0$ and the regularization oracle is uniquely minimized at $0$. For the rest of the proof, we assume that $M\neq 0$ and, without loss of generality, that $m\le n$.

Let $M=U\operatorname{diag}(\sigma)V^T$ be the singular value decomposition,
where $\sigma=(\sigma_1,\ldots,\sigma_m)\in\mathbb R_+^m$. Define
$Z_{\rm b}:=-U\operatorname{diag}(z)V^T$ as follows. If $q>1$, let
\(
z_i:={\sigma_i^{q-1}}/{\|\sigma\|_q^{q-1}}
\)
for all $i$. If $q=1$ (and hence $p=\infty$), let
$z_i=1$ whenever $\sigma_i>0$ and $z_i=0$ whenever $\sigma_i=0$.
Then $\|Z_{\rm b}\|_{S_p}=1$, so $Z_{\rm b}$ is feasible for the ball oracle. Moreover,
\[
\langle M,Z_{\rm b}\rangle=
-\|\sigma\|_q =-\|M\|_{S_q}.
\]
On the other hand, the trace H\"older inequality gives
\[
\langle M,Z\rangle\ge -\|M\|_{S_q}\|Z\|_{S_p}\ge -\|M\|_{S_q},
\qquad
\forall Z\ \text{with}\ \|Z\|_{S_p}\le1.
\]
Thus $Z_{\rm b}$ is a minimizer of the ball oracle, as given in \eqref{def:Delta1M-Delta2M}.

We next derive the formula for $\Delta_p^{\rm r}(M)$. For any $Z\neq0$, write
$Z=tW$ with $t=\|Z\|_{S_p}$ and $\|W\|_{S_p}=1$. The case $Z=0$ corresponds
to $t=0$. Then
\[
\langle M,Z\rangle+\frac12\|Z\|_{S_p}^2
\ge -t\|M\|_{S_q}+\frac12t^2,
\]
and equality is attained by choosing $W=Z_{\rm b}$. The one-dimensional problem
$\min_{t\ge0}\{-t\|M\|_{S_q}+\frac12t^2\}$ has the solution
$t^*=\|M\|_{S_q}$. Hence a minimizer of the regularization oracle is also as given in \eqref{def:Delta1M-Delta2M}.
\end{proof}

\begin{proof}[\textbf{Proof of Lemma \ref{lem:mult-block-subproblem}}]
If $\|{\bf M}\|_{{\bf v},{\bf q}}=0$, then ${\bf M}=0$. The ball oracle is
set-valued and contains the origin, while the regularization oracle is uniquely
minimized at the origin. Hence the stated selections are valid.

Assume now that $\|{\bf M}\|_{{\bf v},{\bf q}}>0$, so at least one block is nonzero. For any feasible
${\bf Z}$, the duality of the mixed norms gives
\begin{align}\label{ineq:Holder-mix}
\langle{\bf M},{\bf Z}\rangle
\ge -\|{\bf M}\|_{{\bf v},{\bf q}}\|{\bf Z}\|_{{\bf w},{\bf p}}
\ge -\|{\bf M}\|_{{\bf v},{\bf q}}.    
\end{align}
Let ${\bf Z}_{\rm b}$ be such that
\begin{align*}
[{\bf Z}_{\rm b}]_j = L_j^{-1/2}
\left(
\frac{L_j^{-1/2}\|[M]_j\|_{S_{q_j}}}
{\|{\bf M}\|_{{\bf v},{\bf q}}}
\right)^{\beta-1}
\Delta_{p_j}^{\rm b}([M]_j)\qquad\forall j\in[J].
\end{align*}
It remains to show that ${\bf Z}_{\rm b}$ is a solution to the ball oracle. By Lemma \ref{lem:single-block-oracle}, one has $\|\Delta_{p_j}^{\rm b}([M]_j)\|_{S_{p_j}}=1$ when $[M]_j\neq0$, while $\Delta_{p_j}^{\rm b}([M]_j)=0$ when $[M]_j=0$. Therefore the zero blocks do
not contribute to either the norm or the inner product. If $\beta>1$, using
$\alpha(\beta-1)=\beta$ gives
\[
\begin{aligned}
\|{\bf Z}_{\rm b}\|_{{\bf w},{\bf p}}^\alpha
&=
\sum_{j=1}^J
\left(
\frac{L_j^{-1/2}\|[M]_j\|_{S_{q_j}}}
{\|{\bf M}\|_{{\bf v},{\bf q}}}
\right)^{\alpha(\beta-1)}
=
\frac{
\sum_{j=1}^J
\left(L_j^{-1/2}\|[M]_j\|_{S_{q_j}}\right)^\beta
}{
\|{\bf M}\|_{{\bf v},{\bf q}}^\beta
}
=1.
\end{aligned}
\]
If $\beta=1$ and $\alpha=\infty$, then 
\[
\|{\bf Z}_{\rm b}\|_{{\bf w},{\bf p}}
=
\max_{1\le j\le J}
\|\Delta_{p_j}^{\rm b}([M]_j)\|_{S_{p_j}}
=1.
\]
It then follows that ${\bf Z}_{\rm b}$ is feasible for the ball oracle. Using the single-block identity
$\langle [M]_j,\Delta_{p_j}^{\rm b}([M]_j)\rangle=-\|[M]_j\|_{S_{q_j}}$,
we obtain that, when $\beta\geq1$,
\[
\begin{aligned}
\left\langle{\bf M},{\bf Z}_{\rm b}\right\rangle
&=
-\sum_{j=1}^J
L_j^{-1/2}\|[M]_j\|_{S_{q_j}}
\left(
\frac{L_j^{-1/2}\|[M]_j\|_{S_{q_j}}}
{\|{\bf M}\|_{{\bf v},{\bf q}}}
\right)^{\beta-1}
\\
&=
-\frac{
\sum_{j=1}^J
\left(L_j^{-1/2}\|[M]_j\|_{S_{q_j}}\right)^\beta
}{
\|{\bf M}\|_{{\bf v},{\bf q}}^{\beta-1}
}
=-\|{\bf M}\|_{{\bf v},{\bf q}}.
\end{aligned}
\]
This, together with \eqref{ineq:Holder-mix}, implies that \eqref{def:multi-ball-oracle} is a valid solution to the ball oracle.

For the regularization oracle, let ${\bf Z}=t{\bf W}$ with
$t=\|{\bf Z}\|_{{\bf w},{\bf p}}$ and
$\|{\bf W}\|_{{\bf w},{\bf p}}\le1$. Then
\[
\langle{\bf M},{\bf Z}\rangle+\frac12\|{\bf Z}\|_{{\bf w},{\bf p}}^2
\ge -t\|{\bf M}\|_{{\bf v},{\bf q}}+\frac12t^2.
\]
Equality is attained by taking
${\bf W}={\bf Z}_{\rm b}$ and
$t=\|{\bf M}\|_{{\bf v},{\bf q}}$. Hence the selected regularization-oracle
minimizer
$\|{\bf M}\|_{{\bf v},{\bf q}}{\bf Z}_{\rm b}$ is valid,
which proves
\eqref{def:multi-reg-oracle}.
\end{proof}

\subsection{Proof of the main results in Section \ref{sec:single}}

In this subsection, we provide proofs of our results in Section \ref{sec:single}.

\subsubsection{Proof of the main results in Section \ref{subsec:mmi-single}}\label{sec:pf-single-mmi}

\begin{proof}[\textbf{Proof of Lemma \ref{lem:nuc-ppt}}]
The case $q=2$ follows from
\begin{align*}
\|A\|_{S_2}^2=\mathrm{Tr}(AA^T)\le \|\Lambda\|_{S_\infty}\mathrm{Tr}(AA^T\Lambda^{-1}).
\end{align*}
We now consider $q\in[1,2)$.
First, suppose $\Lambda$ is diagonal, i.e., $\Lambda=\operatorname{diag}(\lambda_1,\ldots,\lambda_m)$, where $\lambda_i > 0$. Then, it holds that 
\begin{align*}
\|A\|_{S_q}^q = \mathrm{Tr}((AA^T)^{\frac{q}{2}}) \le \sum_{i=1}^m[AA^T]_{ii}^{\frac{q}{2}} \le \Big(\sum_{i=1}^m \lambda_i^{\frac{q}{2-q}}\Big)^{1-\frac{q}{2}}\Big(\sum_{i=1}^m[AA^T]_{ii}\lambda_i^{-1}\Big)^{\frac{q}{2}},
\end{align*}
where the first inequality is due to the concavity of $(\cdot)^{\frac{q}{2}}$, and the second inequality is due to H\"older inequality with exponents $2/q$ and $2/(2-q)$. It then follows that
\begin{align*}
\|A\|_{S_q}^2 \le \Big(\sum_{i=1}^m \lambda_i^{\frac{q}{2-q}}\Big)^{\frac{2-q}{q}}\Big(\sum_{i=1}^m[AA^T]_{ii}\lambda_i^{-1}\Big) = \|\Lambda\|_{S_{\frac{q}{2-q}}}\mathrm{Tr}(AA^T\Lambda^{-1}) = \|\Lambda\|_{S_{\frac{q}{2-q}}}\mathrm{Tr}(A^T\Lambda^{-1}A).  
\end{align*}
When $\Lambda$ is not diagonal, consider its eigen-decomposition, $\Lambda = U\Sigma U^T$, and let $\widetilde{A}:=U^TA$. Then, we have
\begin{align*}
\|A\|_{S_q}^2 = \|\widetilde{A}\|_{S_q}^2 \le \|\Sigma\|_{S_{\frac{q}{2-q}}}\mathrm{Tr}(\widetilde{A}^T\Sigma^{-1}\widetilde{A}) = \|\Lambda\|_{S_{\frac{q}{2-q}}}\mathrm{Tr}(A^T\Lambda^{-1}A) = \|\Lambda\|_{S_{\frac{q}{2-q}}}\mathrm{Tr}(AA^T\Lambda^{-1}),
\end{align*}
as desired.
\end{proof}

\begin{proof}[\textbf{Proof of Lemma \ref{lem:concentration-Sp}}]
By Lemma \ref{lem:nuc-ppt}, it holds that
\[
\Big\|\sum_{t=0}^{k-1}A_t\Big\|_{S_q}^2 \le \|\Lambda\|_{S_{\frac{q}{2-q}}}\mathrm{Tr}\bigg[\Big(\sum_{t=0}^{k-1}A_t \Big)\Big(\sum_{t=0}^{k-1}A_t\Big) ^T\Lambda^{-1}\bigg].
\]
Moreover, since $\{A_t\}$ is a sequence of martingale differences, for $i \neq j$, we have $\E[A_i A_j^T] = 0$.
Then, taking expectation over the previous inequality yields
\[
\E\Big[\Big\|\sum_{t=0}^{k-1}A_t\Big\|_{S_q}^2\Big] \leq \|\Lambda\|_{S_{\frac{q}{2-q}}}\mathrm{Tr}\bigg[\E \Big[\Big(\sum_{t=0}^{k-1}A_t \Big)\Big(\sum_{t=0}^{k-1}A_t\Big) ^T\Big]\Lambda^{-1}\bigg] = \|\Lambda\|_{S_{\frac{q}{2-q}}}\sum_{t=0}^{k-1}\mathrm{Tr}(\E[A_tA_t^T]\Lambda^{-1}).
\]
as desired.
\end{proof}

\subsubsection{Proof of the main results in Section \ref{sec:lmo}}\label{sec:pf-single-lmo}

\begin{proof}[\textbf{Proof of Lemma \ref{lem:desc-ineq-Sp-ball}}]
In view of the update of $\{X_k\}$ in \eqref{eq:update-xk-1}, one has $\|X_{k+1} - X_k\|_{S_p}=\eta$ and $\langle M_k,X_{k+1}-X_k\rangle=-\eta\|M_k\|_{S_q}$. Using these and \eqref{ineq:desc-lpq} with $(Y,X)=(X_{k+1},X_k)$, we obtain that
\begin{align*}
    f(X_{k+1}) & \le f(X_k) + \langle \nabla f(X_k), X_{k+1} - X_k \rangle + \frac{L_{p,q}}{2} \|X_{k+1} - X_k\|^2_{S_p} \\
    & = f(X_k) + \langle M_k, X_{k+1} - X_k \rangle + \langle \nabla f(X_k) - M_k, X_{k+1} - X_k \rangle + \frac{L_{p,q}\eta^2}{2} \\
    & = f(X_k) - \eta \|M_k\|_{S_q} + \langle \nabla f(X_k) - M_k, X_{k+1} - X_k \rangle + \frac{L_{p,q}\eta^2}{2} \\
    & \le f(X_k) - \eta \|M_k\|_{S_q} + \|\nabla f(X_k) - M_k\|_{S_q}\|X_{k+1} - X_k\|_{S_p} + \frac{L_{p,q}\eta^2}{2}\\
    & = f(X_k) - \eta \|M_k\|_{S_q} + \eta\|\nabla f(X_k) - M_k\|_{S_q} + \frac{L_{p,q}\eta^2}{2} \\
    & \le f(X_k) - \eta \|\nabla f(X_k)\|_{S_q} + 2\eta\|\nabla f(X_k) - M_k\|_{S_q} + \frac{L_{p,q}\eta^2}{2},
\end{align*}
where in the second inequality we utilize the trace H\"older inequality.
\end{proof}

\begin{proof}[\textbf{Proof of Lemma \ref{lem:momeutm-bd-Sp-ball}}]
The statement is trivial for $k=0$. For $k\ge1$, by the update of $\{M_k\}$ in \eqref{update-mk-1}, it holds that
\begin{align}
\Delta M_k & = (1-\theta)M_{k-1} + \theta \bar G_k - \nabla f(X_k)\nonumber\\
& = (1-\theta)\Delta M_{k-1} + \theta \Delta G_k + (1-\theta) (\nabla f(X_{k-1}) - \nabla f(X_k))\nonumber\\
& = (1-\theta)^k\Delta M_0 + \theta \sum_{t=0}^{k-1}(1-\theta)^t\Delta G_{k-t} - \sum_{t=0}^{k-1} (1-\theta)^{t+1}(\nabla f(X_{k-t}) - \nabla f(X_{k-t-1})). \label{eq:DeltaMk-idtt-alg1}
\end{align}
Taking the $S_q$-norm and invoking the triangle inequality and Assumption \ref{asp:basic}(b) yields the result.
\end{proof}

\begin{proof}[\textbf{Proof of Lemma \ref{lem:F-con}}]
Fix $k\ge1$. Applying Lemma \ref{lem:concentration-Sp} to the martingale
difference sequence
\[
A_{s-1}:=(1-\theta)^{k-s}\Delta G_s,\qquad s=1,\ldots,k,
\]
with $\Lambda$ being $\Lambda_\varepsilon=(VV^T+\varepsilon I)^{1-\frac{q}{2}} \succ 0$, it holds that for any $\varepsilon>0$,
\begin{align*}
\E\Big[\Big\|\sum_{t=0}^{k-1}(1-\theta)^t\Delta G_{k-t}\Big\|_{S_q}^2\Big]
&\le \|\Lambda_\varepsilon\|_{S_{\frac{q}{2-q}}}
\sum_{t=0}^{k-1}(1-\theta)^{2t}
\mathrm{Tr}\!\left(\E[\Delta G_{k-t}\Delta G_{k-t}^T]
\Lambda_\varepsilon^{-1}\right)\\
&\le
\|\Lambda_\varepsilon\|_{S_{\frac{q}{2-q}}}
\frac{1}{B}
\sum_{t=0}^{k-1}(1-\theta)^{2t}
\mathrm{Tr}\!\left(
(VV^T+\varepsilon I)\Lambda_\varepsilon^{-1}
\right)\\
& = \|\Lambda_\varepsilon\|_{S_{\frac{q}{2-q}}}
\frac{1}{B}
\sum_{t=0}^{k-1}(1-\theta)^{2t}
\mathrm{Tr}\!\left(
(VV^T+\varepsilon I)^{\frac{q}{2}}\right),
\end{align*}
where the second inequality is due to Assumption \ref{asp:basic}(c). Then, letting $\varepsilon\downarrow0$ yields
\begin{align}
\E\Big[\Big\|\sum_{t=0}^{k-1}(1-\theta)^t\Delta G_{k-t}\Big\|_{S_q}^2\Big]
&\le
\big(\mathrm{Tr}[(VV^T)^{\frac{q}{2}}]\big)^{\frac{2-q}{q}}
\frac{\mathrm{Tr}[(VV^T)^{\frac{q}{2}}]}{B}
\sum_{t=0}^{k-1}(1-\theta)^{2t}\nonumber\\
&=
\frac{\|V\|_{S_q}^2}{B}
\sum_{t=0}^{k-1}(1-\theta)^{2t}
\leq \frac{\sigma_{q,B}^2}{\theta}.\label{ineq:E-sum-Deltakt-alg1}
\end{align}
By Jensen's inequality, this further implies that
\begin{align*}
    \E\Big[\Big\|\sum_{t=0}^{k-1}(1-\theta)^t\Delta G_{k-t}\Big\|_{S_q}\Big] 
    &\le
    \frac{\sigma_{q,B}}{\sqrt{\theta}}.
\end{align*}
Moreover, $M_0=\bar G_0$ implies $\E[\|\Delta M_0\|_{S_q}]\le \sigma_{q,B}$.
Taking expectation of both sides of \eqref{ineq:akeyrelation}, we obtain that
\begin{align*}
\E[\|\Delta M_k\|_{S_q}] & \le (1-\theta)^k\E[\|\Delta M_0\|_{S_q}]  + \theta \E\Big[\Big\|\sum_{t=0}^{k-1}(1-\theta)^t\Delta G_{k-t}\Big\|_{S_q}\Big] + L_{p,q}\eta \sum_{t=0}^{k-1} (1-\theta)^{t+1}\\
& \le (1-\theta)^k \sigma_{q,B} + \sqrt{\theta}\sigma_{q,B} + \frac{L_{p,q}\eta}{\theta}.
\end{align*}
Summing this over $k=0,\ldots,K-1$, we obtain that 
\begin{align*}
\sum_{k=0}^{K-1}\E[\|\Delta M_k\|_{S_q}] \le \frac{\sigma_{q,B}}{\theta} + \sqrt{\theta} K \sigma_{q,B} + \frac{L_{p,q}\eta K}{\theta}.  
\end{align*}
\end{proof}

\begin{proof}[\textbf{Proof of Theorem \ref{thm:F-con}}]
In view of \eqref{ineq:matrix-sign-descent} and \eqref{ineq:expec-upbd-nuclear-norm}, we see that for all $K\ge1$,
\begin{align*}
\frac{1}{K}\sum_{k=0}^{K-1}\E[\|\nabla f(X_k)\|_{S_q}] & \le \frac{\Delta_f}{\eta K} + \frac{L_{p,q}\eta}{2} + \frac{2}{K}\sum_{k=0}^{K-1}\E[\|\Delta M_k\|_{S_q}]\\
&\le \frac{\Delta_f}{\eta K} + \frac{L_{p,q}\eta}{2} + \frac{2\sigma_{q,B}}{\theta K} + 2\sqrt{\theta}\sigma_{q,B} + \frac{2L_{p,q}\eta}{\theta} \\
&\le \frac{\Delta_f}{\eta K} + \frac{2\sigma_{q,B}}{\theta K} + 2\sqrt{\theta}\sigma_{q,B} + \frac{3L_{p,q}\eta}{\theta}.
\end{align*}
By the definition of $\eta$ in \eqref{def:eta-theta-F}, one has that for all $K\ge1$,
\begin{align*}
\frac{1}{K}\sum_{k=0}^{K-1}\E[\|\nabla f(X_k)\|_{S_q}] & \le 4\sqrt{\frac{\Delta_fL_{p,q}}{\theta K}} + 2\sqrt{\theta}\sigma_{q,B} + \frac{2\sigma_{q,B}}{\theta K}.
\end{align*}
In addition, in view of the definition of $\theta$ in \eqref{def:eta-theta-F}, we observe that $\sigma_{q,B}\le\sqrt{\frac{\Delta_fL_{p,q}}{K}}$ when $\theta=1$. This, along with the above inequality, implies that when $\theta=1$,
\begin{align*}
\frac{1}{K}\sum_{k=0}^{K-1}\E[\|\nabla f(X_k)\|_{S_q}] \le 8\sqrt{\frac{\Delta_fL_{p,q}}{K}}, \qquad\forall K\ge1.
\end{align*}
On the other hand, when $\theta<1$, we can derive that
\begin{align*}
    \frac{1}{K}\sum_{k=0}^{K-1}\E[\|\nabla f(X_k)\|_{S_q}] \le 6\sqrt{\sigma_{q,B}}\bigg(\frac{\Delta_fL_{p,q}}{K}\bigg)^{1/4} + \frac{2\sigma_{q,B}^2}{\sqrt{\Delta_fL_{p,q}K}}.
\end{align*}
Combining these two cases of $\theta$, we obtain that \eqref{ineq:ave-stat-F} holds as desired.
\end{proof}

\subsubsection{Proof of the main results in Section \ref{sec:qmo}}\label{sec:pf-single-qmo}

\begin{proof}[\textbf{Proof of Lemma \ref{lem:ineq-descent-qmo-single}}]
In view of the update of $\{X_k\}$ in \eqref{update-xk-2}, one has $\|X_{k+1}-X_k\|_{S_p}=\eta\|M_k\|_{S_q}$ and $\langle M_k, X_{k+1} - X_k\rangle=-\eta\|M_k\|_{S_q}^2$. Using these and \eqref{ineq:desc-lpq} with $(Y,X)=(X_{k+1},X_k)$, we obtain that
\begin{align*}
    f(X_{k+1}) 
    & \le f(X_k)
    + \langle \nabla f(X_k), X_{k+1} - X_k \rangle
    + \frac{L_{p,q}}{2}\|X_{k+1} - X_k\|^2_{S_p} \\
    & = f(X_k)
    + \langle M_k, X_{k+1} - X_k \rangle
    + \langle \nabla f(X_k) - M_k, X_{k+1} - X_k \rangle 
    + \frac{L_{p,q}}{2}\|X_{k+1} - X_k\|^2_{S_p} \\
    & = f(X_k)
    - \eta \|M_k\|_{S_q}^2
    + \langle \nabla f(X_k) - M_k, X_{k+1} - X_k \rangle
    + \frac{L_{p,q}}{2}\|X_{k+1} - X_k\|^2_{S_p} \\
    & \le f(X_k)
    - \eta \|M_k\|_{S_q}^2
    + \frac{\eta}{2}\|\nabla f(X_k) - M_k\|_{S_q}^2
    + \Big(\frac{1}{2\eta} + \frac{L_{p,q}}{2}\Big)
    \|X_{k+1} - X_k\|^2_{S_p}\\
    & = f(X_k)
    - \Big(\frac{1}{2\eta} - \frac{L_{p,q}}{2}\Big)
    \|X_{k+1} - X_k\|_{S_p}^2
    + \frac{\eta}{2}\|\nabla f(X_k) - M_k\|_{S_q}^2\\
    & = f(X_k)
    - \Big(\frac{1}{4\eta} - \frac{L_{p,q}}{4}\Big)
    \|X_{k+1} - X_k\|_{S_p}^2
    - \frac{\eta}{4}(1 - L_{p,q}\eta)\|M_k\|_{S_q}^2 
    + \frac{\eta}{2}\|\nabla f(X_k) - M_k\|_{S_q}^2 \\
    & \le f(X_k)
    - \Big(\frac{1}{4\eta} - \frac{L_{p,q}}{4}\Big)
    \|X_{k+1} - X_k\|_{S_p}^2 
    - \frac{\eta}{8}(1 - L_{p,q}\eta)
    \|\nabla f(X_k)\|_{S_q}^2 \\
    &\qquad
    + \frac{\eta}{4}(3 - L_{p,q}\eta)
    \|\nabla f(X_k) - M_k\|_{S_q}^2,
\end{align*}
where the second inequality follows from $\langle A,B\rangle\le\|A\|_{S_q}\|B\|_{S_p}\le\frac{\eta}{2}\|A\|_{S_q}^2+\frac{1}{2\eta}\|B\|_{S_p}^2$ for all $A,B\in\R^{m\times n}$, and the last inequality follows from $-\|A\|_{S_q}^2\le -\frac{1}{2}\|A+B\|_{S_q}^2 + \|B\|_{S_q}^2$ for all $A,B\in\R^{m\times n}$. Hence, \eqref{ineq:matrix-sign-descent-qo} holds as desired.
\end{proof}

\begin{proof}[{\textbf{Proof of Lemma \ref{lem:momeutm-bd-Sp-reg}}}]
The statement for $k=0$ is trivial. We next consider the case $k\ge1$. By the update of $\{M_k\}$ in \eqref{update-mk-2} and the same arguments used to prove \eqref{eq:DeltaMk-idtt-alg1}, for all $k\ge1$, it holds that
\begin{align*}
\Delta M_k
&= (1-\theta)^k\Delta M_0
+ \theta \sum_{t=0}^{k-1}(1-\theta)^t\Delta G_{k-t} 
- (1-\theta) \sum_{t=0}^{k-1}(1-\theta)^t
(\nabla f(X_{k-t}) - \nabla f(X_{k-t-1})).
\end{align*}
Taking the $S_q$-norm and invoking the triangle inequality and Assumption \ref{asp:basic}(b) yields
\begin{align*}
\|\Delta M_k\|_{S_q}
&\le (1-\theta)^k\|\Delta M_0\|_{S_q}
+ \theta
\Big\|\sum_{t=0}^{k-1}(1-\theta)^t\Delta G_{k-t}\Big\|_{S_q} \\
&\qquad
+ (1-\theta) L_{p,q}
\sum_{t=0}^{k-1}(1-\theta)^t
\|X_{k-t-1} - X_{k-t}\|_{S_p}.   
\end{align*}
Squaring both sides and using the inequality $(a+b+c)^2\le3(a^2+b^2+c^2)$ for all $a,b,c>0$, we obtain
\begin{align}
\|\Delta M_k\|_{S_q}^2
& \le 3(1-\theta)^{2k}\|\Delta M_0\|_{S_q}^2
+ 3\theta^2
\Big\|\sum_{t=0}^{k-1}(1-\theta)^t\Delta G_{k-t}\Big\|_{S_q}^2 \nonumber\\
&\qquad
+ 3(1-\theta)^2 L_{p,q}^2
\bigg(
\sum_{t=0}^{k-1}(1-\theta)^t
\|X_{k-t-1} - X_{k-t}\|_{S_p}
\bigg)^2.\label{ineq:iter-DeltaMk2}
\end{align}
By the convexity of $(\cdot)^2$, one has that
\begin{align*}
\bigg(
\frac{
\sum_{t=0}^{k-1}(1-\theta)^t
\|X_{k-t-1} - X_{k-t}\|_{S_p}
}{
\sum_{t=0}^{k-1}(1-\theta)^t
}
\bigg)^2
\le
\frac{
\sum_{t=0}^{k-1}(1-\theta)^t
\|X_{k-t-1} - X_{k-t}\|_{S_p}^2
}{
\sum_{t=0}^{k-1}(1-\theta)^t
}, 
\end{align*}
which implies that
\begin{align*}
\Big(
\sum_{t=0}^{k-1}(1-\theta)^t
\|X_{k-t-1} - X_{k-t}\|_{S_p}
\Big)^2
&\le
\Big(\sum_{t=0}^{k-1}(1-\theta)^t\Big)
\Big(
\sum_{t=0}^{k-1}(1-\theta)^t
\|X_{k-t-1} - X_{k-t}\|_{S_p}^2
\Big) \\
&\le \frac{1}{\theta}
\sum_{t=0}^{k-1}(1-\theta)^t
\|X_{k-t-1} - X_{k-t}\|_{S_p}^2.
\end{align*}
This along with \eqref{ineq:iter-DeltaMk2} implies the desired result.
\end{proof}

\begin{proof}[\textbf{Proof of Lemma \ref{lem:F-con-reg}}]
Fix $k\ge1$. Applying Lemma \ref{lem:concentration-Sp} to the martingale
difference sequence
\[
A_{s-1}:=(1-\theta)^{k-s}\Delta G_s,\qquad s=1,\ldots,k,
\]
with $\Lambda$ being $\Lambda_\varepsilon=(VV^T+\varepsilon I)^{1-\frac{q}{2}} \succ 0$ and
letting $\varepsilon\downarrow0$, we obtain, by the same arguments used to prove \eqref{ineq:E-sum-Deltakt-alg1}, that
\begin{align*}
\E\Big[\Big\|\sum_{t=0}^{k-1}(1-\theta)^t\Delta G_{k-t}\Big\|_{S_q}^2\Big]
& \le \sigma_{q,B}^2\sum_{t=0}^{k-1}(1-\theta)^{2t}
\le \frac{\sigma_{q,B}^2}{\theta}.
\end{align*}
Moreover, $M_0=\bar G_0$ implies $\E[\|\Delta M_0\|_{S_q}^2]\le \sigma_{q,B}^2$. Using these and taking expectation of \eqref{ineq:square-Sp-deter}, we obtain that
\begin{align*}
\E[\|\Delta M_k\|_{S_q}^2]
& \le 3(1-\theta)^{2k}\E[\|\Delta M_0\|_{S_q}^2] 
+ 3\theta^2
\E\Big[\Big\|\sum_{t=0}^{k-1}(1-\theta)^t
\Delta G_{k-t}\Big\|_{S_q}^2\Big] \\
&\qquad
+ \frac{3(1-\theta)^2 L_{p,q}^2}{\theta}
\sum_{t=0}^{k-1}(1-\theta)^t
\E[\|X_{k-t-1} - X_{k-t}\|_{S_p}^2]\\
&\le 3(1-\theta)^{2k}\sigma_{q,B}^2 + 3\theta \sigma_{q,B}^2\\
&\qquad
+ \frac{3(1-\theta)^2 L_{p,q}^2}{\theta}
\sum_{t=0}^{k-1}(1-\theta)^t
\E[\|X_{k-t-1} - X_{k-t}\|_{S_p}^2].
\end{align*}
Summing this inequality over $k=0,\ldots,K-1$ and using $\theta\in(0,1]$, we obtain that \eqref{ineq:expec-upbd-spec-norm} holds.
\end{proof}

\begin{proof}[\textbf{Proof of Theorem \ref{thm:spec-con}}]
Taking expectation over \eqref{ineq:matrix-sign-descent-qo}, telescoping over $k=0,\ldots,K-1$, and using \eqref{ineq:expec-upbd-spec-norm}, we have
\begin{align*}
&\frac{1 - L_{p,q}\eta}{8K}
\sum_{k=0}^{K-1}\E[\|\nabla f(X_k)\|_{S_q}^2]
\\
\le&\, \frac{\Delta_f}{K\eta}
- \frac{1}{K\eta}
\Big(\frac{1}{4\eta} - \frac{L_{p,q}}{4}\Big)
\sum_{k=0}^{K-1}
\E[\|X_{k+1} - X_k\|_{S_p}^2]  + \frac{3 - L_{p,q}\eta}{4K}\sum_{k=0}^{K-1}\E[\|\Delta M_k\|_{S_q}^2] \\
\le&\, \frac{\Delta_f}{K\eta}
- \bigg[
\frac{1}{K\eta}
\Big(\frac{1}{4\eta} - \frac{L_{p,q}}{4}\Big)
- \frac{3L_{p,q}^2}{\theta^2}
\cdot\frac{3-L_{p,q}\eta}{4K}
\bigg]
\sum_{k=0}^{K-1}
\E[\|X_{k+1} - X_k\|_{S_p}^2] \\
&\quad + \frac{3(3 - L_{p,q}\eta)}{4K\theta}\sigma_{q,B}^2  + \frac{3(3 - L_{p,q}\eta)\theta}{4}\sigma_{q,B}^2.
\end{align*}
With $\eta=\theta/(4L_{p,q})$ and $\theta\le1$, the coefficient of the second term is nonnegative and $1-L_{p,q}\eta\ge3/4$. Therefore,
\begin{align*}
\frac{1}{K}\sum_{k=0}^{K-1}\E[\|\nabla f(X_k)\|_{S_q}^2]
&\le \frac{128L_{p,q}\Delta_f}{3\theta K}+\frac{24\sigma_{q,B}^2}{\theta K}+24\theta\sigma_{q,B}^2\\
&\le \frac{43(L_{p,q}\Delta_f+\sigma_{q,B}^2)}{\theta K}+24\theta\sigma_{q,B}^2.
\end{align*}
If $\theta<1$, the definition of $\theta$ gives
\begin{align*}
\frac{1}{K}\sum_{k=0}^{K-1}\E[\|\nabla f(X_k)\|_{S_q}^2]
\le 67\sqrt{\frac{\sigma_{q,B}^2(L_{p,q}\Delta_f+\sigma_{q,B}^2)}{K}}.
\end{align*}
If $\theta=1$, then $\sigma_{q,B}^2\le (L_{p,q}\Delta_f+\sigma_{q,B}^2)/K$, and hence
\begin{align*}
\frac{1}{K}\sum_{k=0}^{K-1}\E[\|\nabla f(X_k)\|_{S_q}^2]
\le 67\frac{L_{p,q}\Delta_f+\sigma_{q,B}^2}{K}.
\end{align*}
Combining the two cases yields \eqref{ineq:ave-stat-spec}.
\end{proof}

\subsection{Proof of the main results in Section \ref{sec:multi}}

In this subsection, we provide proofs of our results in Section \ref{sec:multi}.

\subsubsection{Proof of the main results in Section \ref{sec:multi-mii}}\label{sec:pf-multi-mii}

\begin{proof}[\textbf{Proof of Lemma \ref{lem:multi-block-martingale}}]
For convenience, define
\[
\mathbf S_k := \sum_{t=0}^{k-1}\mathbf A_t
\quad \text{and} \quad
[S_k]_j:=\sum_{t=0}^{k-1}[A_t]_j.
\]
By Lemma \ref{lem:nuc-ppt}, for each $j=1,\ldots,J$, we have
\[
L_j^{-1}\|[S_k]_j\|_{S_{q_j}}^2
\le
L_j^{-1}
\|\Lambda_j\|_{S_{\frac{q_j}{2-q_j}}}
\operatorname{Tr}([S_k]_j[S_k]_j^T\Lambda_j^{-1})
=: Y_j^2.
\]
Then, it follows that
\[
\|\mathbf S_k\|_{{\bf v},{\bf q}}^2
=
\Big(
\sum_{j=1}^J
\Big(
L_j^{-1/2}\|[S_k]_j\|_{S_{q_j}}
\Big)^\beta
\Big)^{\frac 2\beta}
\le
\Big(
\sum_{j=1}^J Y_j^\beta
\Big)^{\frac 2\beta}.
\]
Since the martingale-difference property of $\{[A_t]_j\}$ directly implies
$\E[Y_j^2]=B_j^2$, taking expectation and applying Minkowski's inequality in
$L_{\frac 2\beta}$ gives
\begin{align*}
\E[\|\mathbf S_k\|_{{\bf v},{\bf q}}^2]
&\le
\E\Big[
\Big(
\sum_{j=1}^J
Y_j^\beta
\Big)^{\frac 2\beta}
\Big]
\le
\Big[
\sum_{j=1}^J
(\E[Y_j^2])^{\frac\beta 2}
\Big]^{\frac 2\beta}
=
\Big[
\sum_{j=1}^J
B_j^\beta
\Big]^{\frac 2\beta}.
\end{align*}
This completes the proof.
\end{proof}

\subsubsection{Proof of the main results in Section \ref{sec:multi-ball}}\label{sec:pf-multi-ball}

\begin{proof}[\textbf{Proof of Lemma \ref{lem:multi-ball-descent}}]
By the update \eqref{eq:update-xk-multi-1} and the duality of
$\|\cdot\|_{{\bf w},{\bf p}}$ and $\|\cdot\|_{{\bf v},{\bf q}}$,
\[
\langle \mathbf M_k,\mathbf X_{k+1}-\mathbf X_k\rangle
=-\eta\|\mathbf M_k\|_{{\bf v},{\bf q}},
\qquad
\|\mathbf X_{k+1}-\mathbf X_k\|_{{\bf w},{\bf p}}=\eta.
\]
Using \eqref{ineq:desc-multi} with
$(\mathbf Y,\mathbf X)=(\mathbf X_{k+1},\mathbf X_k)$ gives
\begin{align*}
f(\mathbf X_{k+1})
&\le f(\mathbf X_k)
+\langle\nabla f(\mathbf X_k),\mathbf X_{k+1}-\mathbf X_k\rangle
+\frac{1}{2}
\|\mathbf X_{k+1}-\mathbf X_k\|_{{\bf w},{\bf p}}^2
\\
&= f(\mathbf X_k)
+\langle \mathbf M_k,\mathbf X_{k+1}-\mathbf X_k\rangle
+\langle\nabla f(\mathbf X_k)-\mathbf M_k,
\mathbf X_{k+1}-\mathbf X_k\rangle
+\frac{\eta^2}{2}
\\
&\le f(\mathbf X_k)
-\eta\|\mathbf M_k\|_{{\bf v},{\bf q}}
+\eta\|\nabla f(\mathbf X_k)-\mathbf M_k\|_{{\bf v},{\bf q}}
+\frac{\eta^2}{2}
\\
&\le f(\mathbf X_k)
-\eta\|\nabla f(\mathbf X_k)\|_{{\bf v},{\bf q}}
+2\eta\|\Delta \mathbf M_k\|_{{\bf v},{\bf q}}
+\frac{\eta^2}{2},
\end{align*}
where the last two inequalities use the weighted primal-dual H\"older inequality
and the triangle inequality.
\end{proof}

\begin{proof}[\textbf{Proof of Lemma \ref{lem:multi-key-relation}}]
The statement is trivial for $k=0$. For $k\ge1$, \eqref{update-mk-multi-1} and the same arguments used to derive \eqref{eq:DeltaMk-idtt-alg1} give
\begin{align*}
\Delta \mathbf M_k
=(1-\theta)^k\Delta \mathbf M_0
+\theta\sum_{t=0}^{k-1}(1-\theta)^t\Delta \mathbf G_{k-t}
-\sum_{t=0}^{k-1}(1-\theta)^{t+1}
\big(\nabla f(\mathbf X_{k-t})-\nabla f(\mathbf X_{k-t-1})\big).
\end{align*}
Taking the $\|\cdot\|_{{\bf v},{\bf q}}$ norm, using the triangle inequality,
Assumption \ref{asp:multi-basic}(c), and
$\|\mathbf X_{s+1}-\mathbf X_s\|_{{\bf w},{\bf p}}=\eta$ yields
\eqref{ineq:akeyrelation-multi}.
\end{proof}

\begin{proof}[\textbf{Proof of Lemma \ref{lem:multi-F-con}}]
Fix $k\ge1$. For each $j$, set
\[
\Lambda_{j,\varepsilon}
:=
(V_j V_j^T+\varepsilon I)^{1-\frac{q_j}{2}}
\succ0.
\]
By Lemma \ref{lem:multi-block-martingale} and
Assumption \ref{asp:multi-basic}(d), we obtain
\begin{align*}
\E\Big[
\Big\|
\sum_{t=0}^{k-1}(1-\theta)^t\Delta \mathbf G_{k-t}
\Big\|_{{\bf v},{\bf q}}^2
\Big]
&\le
\Big[
\sum_{j=1}^J
\Big(
\frac{L_j^{-1/2}}{\sqrt B}
\|\Lambda_{j,\varepsilon}\|_{S_{\frac{q_j}{2-q_j}}}^{1/2}
\Big[
\operatorname{Tr}\!\Big(
V_j V_j^T\Lambda_{j,\varepsilon}^{-1}
\Big)
\sum_{t=0}^{k-1}(1-\theta)^{2t}
\Big]^{1/2}
\Big)^\beta
\Big]^{\frac 2\beta}.
\end{align*}
Letting $\varepsilon\downarrow0$ yields
\begin{align*}
&\E\Big[
\Big\|
\sum_{t=0}^{k-1}(1-\theta)^t\Delta \mathbf G_{k-t}
\Big\|_{{\bf v},{\bf q}}^2
\Big]
\le \sigma_{{\bf v},{\bf q},B}^2
\sum_{t=0}^{k-1}(1-\theta)^{2t}
\le
\frac{\sigma_{{\bf v},{\bf q},B}^2}{\theta}.
\end{align*}
By Jensen's inequality,
\[
\E\Big[
\Big\|
\sum_{t=0}^{k-1}(1-\theta)^t\Delta \mathbf G_{k-t}
\Big\|_{{\bf v},{\bf q}}
\Big]
\le
\frac{\sigma_{{\bf v},{\bf q},B}}{\sqrt\theta}.
\]
Since $\mathbf M_0=\bar{\mathbf G}_0$, we have
$\Delta \mathbf M_0=\Delta \mathbf G_0$. Applying the same argument to this
one-term martingale difference sequence gives
$\E[\|\Delta \mathbf M_0\|_{{\bf v},{\bf q}}]
\le\sigma_{{\bf v},{\bf q},B}$.
Taking expectation in \eqref{ineq:akeyrelation-multi} gives
\begin{align*}
\E[\|\Delta \mathbf M_k\|_{{\bf v},{\bf q}}]
&\le
(1-\theta)^k\sigma_{{\bf v},{\bf q},B}
+\sqrt\theta\,\sigma_{{\bf v},{\bf q},B}
+\frac{\eta}{\theta}.
\end{align*}
Summing this inequality over $k=0,\ldots,K-1$ proves
\eqref{ineq:expec-upbd-multi-norm}.
\end{proof}

\begin{proof}[\textbf{Proof of Theorem \ref{thm:multi-F-con}}]
Taking expectation in \eqref{ineq:matrix-sign-descent-multi}, telescoping over
$k=0,\ldots,K-1$, and using \eqref{ineq:expec-upbd-multi-norm}, we have
\begin{align*}
\frac{1}{K}
\sum_{k=0}^{K-1}
\E[\|\nabla f(\mathbf X_k)\|_{{\bf v},{\bf q}}]
&\le
\frac{\Delta_f}{\eta K}
+\frac{\eta}{2}
+\frac{2}{K}
\sum_{k=0}^{K-1}
\E[\|\Delta \mathbf M_k\|_{{\bf v},{\bf q}}]
\\
&\le
\frac{\Delta_f}{\eta K}
+\frac{\eta}{2}
+\frac{2\sigma_{{\bf v},{\bf q},B}}{\theta K}
+2\sqrt\theta\,\sigma_{{\bf v},{\bf q},B}
+\frac{2\eta}{\theta}
\\
&\le
\frac{\Delta_f}{\eta K}
+\frac{2\sigma_{{\bf v},{\bf q},B}}{\theta K}
+2\sqrt\theta\,\sigma_{{\bf v},{\bf q},B}
+\frac{3\eta}{\theta}.
\end{align*}
Using the definition of $\eta$ in \eqref{def:eta-theta-F-multi}, this becomes
\[
\frac{1}{K}
\sum_{k=0}^{K-1}
\E[\|\nabla f(\mathbf X_k)\|_{{\bf v},{\bf q}}]
\le
4\sqrt{\frac{\Delta_f}{\theta K}}
+2\sqrt\theta\,\sigma_{{\bf v},{\bf q},B}
+\frac{2\sigma_{{\bf v},{\bf q},B}}{\theta K}.
\]
If $\theta=1$, then
$\sigma_{{\bf v},{\bf q},B}\le\sqrt{\Delta_f/K}$, and hence the RHS of the previous inequality
is bounded by $8\sqrt{\Delta_f/K}$. If $\theta<1$, the definition of
$\theta$ gives
\[
\frac{1}{K}
\sum_{k=0}^{K-1}
\E[\|\nabla f(\mathbf X_k)\|_{{\bf v},{\bf q}}]
\le
6\sqrt{\sigma_{{\bf v},{\bf q},B}}
\left(\frac{\Delta_f}{K}\right)^{1/4}
+\frac{2\sigma_{{\bf v},{\bf q},B}^2}{\sqrt{\Delta_f K}}.
\]
Combining the two cases yields \eqref{ineq:ave-stat-F-multi}.
\end{proof}

\subsubsection{Proof of the main results in Section \ref{sec:multi-reg}}\label{sec:pf-multi-reg}

\begin{proof}[\textbf{Proof of Lemma \ref{lem:multi-qo-descent}}]
By \eqref{eq:update-xk-multi-2} and the duality of
$\|\cdot\|_{{\bf w},{\bf p}}$ and $\|\cdot\|_{{\bf v},{\bf q}}$,
\[
\|\mathbf X_{k+1}-\mathbf X_k\|_{{\bf w},{\bf p}}
=\eta\|\mathbf M_k\|_{{\bf v},{\bf q}},
\qquad
\langle\mathbf M_k,\mathbf X_{k+1}-\mathbf X_k\rangle
=-\eta\|\mathbf M_k\|_{{\bf v},{\bf q}}^2.
\]
Using \eqref{ineq:desc-multi} with
$(\mathbf Y,\mathbf X)=(\mathbf X_{k+1},\mathbf X_k)$, we obtain
\begin{align*}
f(\mathbf X_{k+1})
&\le f(\mathbf X_k)
+\langle\nabla f(\mathbf X_k),\mathbf X_{k+1}-\mathbf X_k\rangle
+\frac{1}{2}\|\mathbf X_{k+1}-\mathbf X_k\|_{{\bf w},{\bf p}}^2\\
&= f(\mathbf X_k)
+\langle\mathbf M_k,\mathbf X_{k+1}-\mathbf X_k\rangle
+\langle\nabla f(\mathbf X_k)-\mathbf M_k,\mathbf X_{k+1}-\mathbf X_k\rangle +\frac{1}{2}\|\mathbf X_{k+1}-\mathbf X_k\|_{{\bf w},{\bf p}}^2\\
&\le f(\mathbf X_k)
-\eta\|\mathbf M_k\|_{{\bf v},{\bf q}}^2
+\frac{\eta}{2}\|\nabla f(\mathbf X_k)-\mathbf M_k\|_{{\bf v},{\bf q}}^2\\
&\hspace{3cm}
+\frac{1}{2\eta}\|\mathbf X_{k+1}-\mathbf X_k\|_{{\bf w},{\bf p}}^2
+\frac{1}{2}\|\mathbf X_{k+1}-\mathbf X_k\|_{{\bf w},{\bf p}}^2\\
&= f(\mathbf X_k)
-\left(\frac{1}{2\eta}-\frac{1}{2}\right)
\|\mathbf X_{k+1}-\mathbf X_k\|_{{\bf w},{\bf p}}^2
+\frac{\eta}{2}\|\Delta \mathbf M_k\|_{{\bf v},{\bf q}}^2\\
&= f(\mathbf X_k)
-\left(\frac{1}{4\eta}-\frac{1}{4}\right)
\|\mathbf X_{k+1}-\mathbf X_k\|_{{\bf w},{\bf p}}^2
-\frac{\eta}{4}(1-\eta)\|\mathbf M_k\|_{{\bf v},{\bf q}}^2
+\frac{\eta}{2}\|\Delta \mathbf M_k\|_{{\bf v},{\bf q}}^2\\
&\le f(\mathbf X_k)
-\left(\frac{1}{4\eta}-\frac{1}{4}\right)
\|\mathbf X_{k+1}-\mathbf X_k\|_{{\bf w},{\bf p}}^2
-\frac{\eta}{8}(1-\eta)
\|\nabla f(\mathbf X_k)\|_{{\bf v},{\bf q}}^2\\
&\hspace{3cm}
+\frac{\eta}{4}(3-\eta)
\|\Delta \mathbf M_k\|_{{\bf v},{\bf q}}^2,
\end{align*}
where we used the weighted H\"older inequality and
$-\|a-b\|^2\le -\frac12\|a\|^2+\|b\|^2$ in the last step.
\end{proof}

\begin{proof}[\textbf{Proof of Lemma \ref{lem:multi-square-deter}}]
The case $k=0$ is trivial. For $k\ge1$, \eqref{update-mk-multi-2} and the same arguments used to derive \eqref{eq:DeltaMk-idtt-alg1} give
\[
\Delta \mathbf M_k
=(1-\theta)^k\Delta \mathbf M_0
+\theta\sum_{t=0}^{k-1}(1-\theta)^t\Delta \mathbf G_{k-t}
-(1-\theta)\sum_{t=0}^{k-1}(1-\theta)^t
\big(\nabla f(\mathbf X_{k-t})-\nabla f(\mathbf X_{k-t-1})\big).
\]
Taking the $\|\cdot\|_{{\bf v},{\bf q}}$ norm and using
Assumption \ref{asp:multi-basic}(c) yields
\begin{align*}
\|\Delta \mathbf M_k\|_{{\bf v},{\bf q}}
&\le
(1-\theta)^k\|\Delta \mathbf M_0\|_{{\bf v},{\bf q}}
+\theta
\Big\|\sum_{t=0}^{k-1}(1-\theta)^t\Delta \mathbf G_{k-t}\Big\|_{{\bf v},{\bf q}}\\
&\quad
+(1-\theta)
\sum_{t=0}^{k-1}(1-\theta)^t
\|\mathbf X_{k-t-1}-\mathbf X_{k-t}\|_{{\bf w},{\bf p}}.
\end{align*}
Squaring both sides and applying Jensen's inequality to the last weighted sum,
\[
\left(
\sum_{t=0}^{k-1}(1-\theta)^t
\|\mathbf X_{k-t-1}-\mathbf X_{k-t}\|_{{\bf w},{\bf p}}
\right)^2
\le
\frac{1}{\theta}
\sum_{t=0}^{k-1}(1-\theta)^t
\|\mathbf X_{k-t-1}-\mathbf X_{k-t}\|_{{\bf w},{\bf p}}^2,
\]
proves \eqref{ineq:square-multi-deter}.
\end{proof}

\begin{proof}[\textbf{Proof of Lemma \ref{lem:multi-qo-error}}]
As in the proof of Lemma \ref{lem:multi-F-con}, Lemma \ref{lem:multi-block-martingale} 
with
$\Lambda_{j,\varepsilon}
=(V_j V_j^T+\varepsilon I)^{1-\frac{q_j}{2}}$
and $\varepsilon\downarrow0$ gives
\begin{align*}
\E\Big[
\Big\|\sum_{t=0}^{k-1}(1-\theta)^t
\Delta \mathbf G_{k-t}\Big\|_{{\bf v},{\bf q}}^2
\Big]
&\le
\sigma_{{\bf v},{\bf q},B}^2
\sum_{t=0}^{k-1}(1-\theta)^{2t}
\le
\frac{\sigma_{{\bf v},{\bf q},B}^2}{\theta}.
\end{align*}
Moreover, $\mathbf M_0=\bar{\mathbf G}_0$ implies
$\E[\|\Delta \mathbf M_0\|_{{\bf v},{\bf q}}^2]
\le \sigma_{{\bf v},{\bf q},B}^2$.
Taking expectation in \eqref{ineq:square-multi-deter}, we obtain
\begin{align*}
\E[\|\Delta \mathbf M_k\|_{{\bf v},{\bf q}}^2]
&\le
3(1-\theta)^{2k}\sigma_{{\bf v},{\bf q},B}^2
+3\theta\sigma_{{\bf v},{\bf q},B}^2
+\frac{3(1-\theta)^2}{\theta}
\sum_{t=0}^{k-1}(1-\theta)^t
\E[\|\mathbf X_{k-t-1}-\mathbf X_{k-t}\|_{{\bf w},{\bf p}}^2].
\end{align*}
Summing over $k=0,\ldots,K-1$ yields
\eqref{ineq:expec-upbd-spec-norm-multi}.
\end{proof}

\begin{proof}[\textbf{Proof of Theorem \ref{thm:multi-spec-con}}]
Taking expectation in \eqref{ineq:matrix-sign-descent-qo-multi}, telescoping
over $k=0,\ldots,K-1$, and using
\eqref{ineq:expec-upbd-spec-norm-multi}, we have
\begin{align*}
&\frac{1-\eta}{8K}
\sum_{k=0}^{K-1}
\E[\|\nabla f(\mathbf X_k)\|_{{\bf v},{\bf q}}^2]
\\
\le&\frac{\Delta_f}{K\eta}
-\frac{1}{K\eta}\left(\frac{1}{4\eta}-\frac{1}{4}\right)
\sum_{k=0}^{K-1}
\E[\|\mathbf X_{k+1}-\mathbf X_k\|_{{\bf w},{\bf p}}^2]
+\frac{3-\eta}{4K}
\sum_{k=0}^{K-1}
\E[\|\Delta \mathbf M_k\|_{{\bf v},{\bf q}}^2]
\\
\le&\frac{\Delta_f}{K\eta}
-\left[
\frac{1}{K\eta}\left(\frac{1}{4\eta}-\frac{1}{4}\right)
-\frac{3}{\theta^2}\cdot\frac{3-\eta}{4K}
\right]
\sum_{k=0}^{K-1}
\E[\|\mathbf X_{k+1}-\mathbf X_k\|_{{\bf w},{\bf p}}^2]\\
&\quad
+\frac{3(3-\eta)}{4K\theta}\sigma_{{\bf v},{\bf q},B}^2
+\frac{3(3-\eta)\theta}{4}\sigma_{{\bf v},{\bf q},B}^2.
\end{align*}
With $\eta=\theta/4$ and $\theta\le1$, the coefficient of the second term is
nonnegative and $1-\eta\ge3/4$. Therefore,
\begin{align*}
\frac{1}{K}\sum_{k=0}^{K-1}
\E[\|\nabla f(\mathbf X_k)\|_{{\bf v},{\bf q}}^2]
&\le
\frac{128\Delta_f}{3\theta K}
+\frac{24\sigma_{{\bf v},{\bf q},B}^2}{\theta K}
+24\theta\sigma_{{\bf v},{\bf q},B}^2\\
&\le
\frac{43(\Delta_f+\sigma_{{\bf v},{\bf q},B}^2)}{\theta K}
+24\theta\sigma_{{\bf v},{\bf q},B}^2.
\end{align*}
If $\theta<1$, the definition of $\theta$ gives
\[
\frac{1}{K}\sum_{k=0}^{K-1}
\E[\|\nabla f(\mathbf X_k)\|_{{\bf v},{\bf q}}^2]
\le
67\sqrt{
\frac{\sigma_{{\bf v},{\bf q},B}^2
(\Delta_f+\sigma_{{\bf v},{\bf q},B}^2)}{K}}.
\]
If $\theta=1$, then
$\sigma_{{\bf v},{\bf q},B}^2
\le(\Delta_f+\sigma_{{\bf v},{\bf q},B}^2)/K$, and hence
\[
\frac{1}{K}\sum_{k=0}^{K-1}
\E[\|\nabla f(\mathbf X_k)\|_{{\bf v},{\bf q}}^2]
\le
67\frac{\Delta_f+\sigma_{{\bf v},{\bf q},B}^2}{K}.
\]
Combining the two cases yields \eqref{ineq:ave-stat-spec-multi}.
\end{proof}







\appendix
\section*{Appendix}

\section{Related works}\label{apx:related-work}

In this part, we provide an overview of the relevant literature on the development of adaptive optimization methods. 


Driven by the transformative impact of neural network applications, modern architectures have evolved into increasingly complex compositions of heterogeneous building blocks \cite{gu2023mamba,he2016deep,shazeer2017outrageously,vaswani2017attention}. This rapid architectural progress has brought new optimization challenges to the forefront of deep learning research. Empirical evidence suggests that basic stochastic optimization methods such as SGD are often insufficient for training modern neural networks effectively, whereas adaptive optimization methods have become indispensable for modern deep learning due to the increasing complexity of contemporary architectures; see, e.g., \cite{zhang2020adaptive,zhang2024transformers}. As a result, developing principles for adaptive optimization methods has become a central theme in deep learning optimization research.

Earlier efforts developed coordinate-wise learning rates by viewing neural network training as stochastic optimization in a high-dimensional Euclidean space $\mathbb{R}^n$. Unlike SGD, which uses a single global learning rate, adaptive methods rescale each coordinate using historical gradient information. Representative examples include AdaGrad \cite{duchi2011adaptive}, RMSProp \cite{hinton2012rmsprop}, AdaDelta \cite{zeiler2012adadelta}, and Adam \cite{kingma2014adam}. Among them, Adam has become the most widely used optimizer in modern deep learning by combining momentum with coordinate-wise normalization based on second-moment estimates. Its theoretical properties have been extensively studied, see, e.g., \cite{chen2018on,defossez2022a,jiang2025stochastic,reddi2018convergence,zhang2022adam}, and numerous variants have been proposed, see, e.g., \cite{Liu2020On,loshchilov2019decoupled,luo2018adaptive,luo2024badam,zhang2025adam,zhuang2020adabelief}.

Despite the tremendous success of coordinate-wise learning-rate methods, the pursuit of more effective deep learning optimization has motivated researchers to explore alternative principles for designing adaptive optimization methods. One such perspective views optimizer design as controlling the geometry of the local search through the choice of norms \cite{bernstein2024old,large2024scalable}. 
Two representative methods, signSGD \cite{bernstein2018signsgd} and Muon \cite{jordan2024muon}, have attracted significant attention in modern deep learning optimization. In particular, Muon differs fundamentally from coordinate-wise adaptive methods, as it adapts to the spectrum of the update rather than to individual coordinates. This norm-based perspective has subsequently inspired a growing body of work on new algorithmic variants \cite{du2026newton,he2025low,he2025demuon,lau2025polargrad,liu2025muon,riabinin2025gluon} and their theoretical understanding \cite{chen2026muon,davis2025spectral,kovalev2025understanding,li2025note,shen2025convergence,shulgin2025beyond,su2025isotropic,yu2026sign}.




\bibliographystyle{abbrv}
\bibliography{ref}

\end{document}